             \newcommand{\edgegraph}{\includegraphics[width=3.78mm, height=1.8mm]{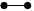}}
             \newcommand{\circlegraph}{\includegraphics[width=3.78mm, height=2.79mm]{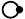}}
             \newcommand{\thetagraph}{\includegraphics[width=3.78mm, height=2.79mm]{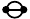}}
             \newcommand{\dumbbellgraph}{\includegraphics[width=7.2mm, height=2.79mm]{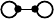}}
\newcommand{\hy}{\mathcal{H}}
\newcommand{\xsp}{X_s^\prime}
\newcommand{\hyper}{\mathcal{H}}
\newcommand{\SLO}{\mathrm{SL}_2\left(\mathcal{O}_{-m}\right)}
\newcommand{\PSLO}{\mathrm{PSL}_2\left(\mathcal{O}_{-m}\right)}
\newcommand{\SLtwo}{\mathrm{SL}_2}
\newcommand{\PSLtwo}{\mathrm{PSL}_2}
\newcommand{\go}{\text{P}{\Gamma_0}}
\newcommand{\calO}{{\mathcal O}}
\newcommand{\pslo}{\mathrm{PSL}_2(\calO_{-11})}
\newcommand{\pslod}{\mathrm{PSL}_2(\calO_{-m})}
\newcommand{\bC}{{\mathbb{C}}}
\newcommand{\Z}{{\mathbb{Z}}}
\newcommand{\N}{{\mathbb{N}}}
\newcommand{\ringOm}{\mathcal{O}_{-m}}
\newcommand{\rationals}{{\mathbb{Q}}}
\newcommand{\F}{{\mathbb{F}}}
\newcommand*{\Homol}{H}
\newcommand*{\Cohomol}{H}
\newcommand{\sign}{{\rm sign}}
\newcommand{\Kleinfourgroup}{\mathcal{D}_2}
\newcommand{\Di}{{\bf Di}}
\newcommand{\Q}{{\bf Q}_8}
\newcommand{\Te}{{\bf Te}}
\newcommand{\mat}{\begin{pmatrix}a & b\\c & d\end{pmatrix}}
\newcommand{\dE}{d_2^{\rm ESS}}
\newcommand{\dL}{d^{\rm LH3S}}
\newcommand{\calC}{\mathcal{C}}
\newcommand{\calG}{\mathcal{G}}
\theoremstyle{plain}
\newtheorem{thm}{\bfseries Theorem}
\newtheorem{theorem}[thm]{\bfseries Theorem}
\newtheorem{lemma}[thm]{\bfseries Lemma}
\newtheorem{proposition}[thm]{\bfseries Proposition}
\theoremstyle{remark}
\newtheorem{df}[thm]{\bfseries Definition}
\theoremstyle{plain}
\newtheorem{prop}[thm]{Proposition}
\newtheorem{corollary}[thm]{Corollary}
\theoremstyle{definition}
\newtheorem{rem}[thm]{Remark}
\newcommand{\ef}{{\mathbb F}_2}
\DeclareMathOperator{\rank}{rank}
\begin{document}

\title[Mod 2 cohomology of congruence subgroups in the Bianchi groups]{The mod 2 cohomology rings of \\congruence subgroups in the Bianchi groups}
\author[Ethan Berkove, Grant S. Lakeland and Alexander D. Rahm]{Ethan Berkove$^1$, Grant S. Lakeland$^2$ and Alexander D. Rahm$^3$
\\(with an appendix by Bui Anh Tuan and Sebastian Sch\"onnenbeck)}
 \address{Department of Mathematics, 230 Pardee Hall, Lafayette College, Easton, PA 18042, USA }
 \email{berkovee@lafayette.edu}
 \urladdr{http://sites.lafayette.edu/berkovee/}
 \address{Department of Mathematics and Computer Science,
 Eastern Illinois University,
 600 Lincoln Avenue,
 Charleston, IL 61920, USA}
 \email{gslakeland@eiu.edu}
 \urladdr{http://www.ux1.eiu.edu/~gslakeland/}
 \address{Universit\'e de la Polyn\'esie Fran\c{c}aise, Laboratoire de math\'ematiques GAATI, BP 6570, 98702 Faaa, French Polynesia}
 \email{Alexander.Rahm@upf.pf}
 \urladdr{http://gaati.org/rahm/}
\date{\today}
\subjclass[2000]{11F75, Cohomology of arithmetic groups.\\
1) Lafayette College, \url{https://math.lafayette.edu/people/ethan-berkove/}\\
2) Eastern Illinois University, \url{http://www.ux1.eiu.edu/~gslakeland/}\\
3) Universit\'e de la Polyn\'esie Fran\c{c}aise,  \url{http://gaati.org/rahm/}}

\begin{abstract}
We establish a dimension formula involving a number of parameters for the mod 2 cohomology of finite index subgroups in the Bianchi groups (SL$_2$ groups over the ring of integers in an imaginary quadratic number field).   The proof of our formula involves an analysis of the equivariant spectral sequence, combined with torsion subcomplex reduction.  We also provide an algorithm to compute a Ford domain for congruence subgroups in the Bianchi groups, from which the parameters in our formula can be explicitly computed.  

\end{abstract}

\maketitle

\section{Introduction}
Calegari and Venkatesh have recently proven a numerical form of a Jacquet--Langlands correspondence for torsion classes on arithmetic hyperbolic 3-manifolds~\cite{CalegariVenkatesh}.
This can be seen as a new kind of Langlands programme, for which one has to study torsion in the cohomology of arithmetic groups.
A class of arithmetic groups that is of natural interest here, as well as in the classical Langlands programme, consists of the congruence subgroups in the Bianchi groups.
By a \textit{Bianchi group}, we mean an SL$_2$ group over the ring of integers in an imaginary quadratic number field.
Our aim in this paper is to provide new tools for computing the torsion in the cohomology of the congruence subgroups in the Bianchi groups.

There are already several approaches known for studying congruence subgroups and their cohomology:
\begin{itemize}
\item Grunewald's method of taking a presentation for the whole Bianchi group, and deriving presentations for finite index subgroups via the Reidemeister-Schreier algorithm~\cites{GrunewaldSchwermer};
\item Utilizing the Eckmann--Shapiro lemma for computing cohomology of congruence subgroups directly from cohomological data of the full Bianchi group~\cite{RahmTsaknias};
\item Construction of a Vorono\"i cell complex for the congruence subgroup~\cites{Schoennenbeck, BCNS}.
\end{itemize}
What one typically harvests with these approaches are tables of machine results in which everything looks somewhat ad hoc.
A question posed by \mbox{Fritz Grunewald} to the third author asks for deeper structure.  Specifically, can one determine how the cohomology with small prime field coefficients develops when varying among Bianchi groups and their finite index subgroups,
and considering all cohomological degrees at once?
Our response to this question, in Theorem \ref{mainthm} below, is that far from being ad hoc, the cohomology of congruence subgroups of Bianchi groups depends for the most part on a surprisingly small amount of geometric data.  
This theorem also closes the $2$-torsion gap that could not be addressed by results already in the literature~\cite{AccessingFarrell}.  
(The pertinent formulas could not be applied to the case of $2$-torsion for the SL$_2$ subgroups because the $-1$ matrix is an omnipresent cell stabilizer of order $2$.)  
Let $\Gamma$ be a finite index subgroup in a Bianchi group not over the Gaussian integers, 
and let $X_s$ be the non-central $2$-torsion subcomplex of the action of $\Gamma$ on hyperbolic $3$-space $X$.
Note that technically we use a $2$-dimensional $\Gamma$-equivariant retract for $X$, 
but this does not affect the following formulas; in particular because $\Gamma$ is not over the Gaussian integers, 
it does not affect $X_s$. 
\begin{theorem} \label{mainthm}
Denote by $k \geq 0$ the number of conjugacy classes of subgroups of type $\Z/4$ in $\Gamma$ which are not included in any subgroup of type the quaternion group with $8$ elements ($\Q$) 
or the binary tetrahedral group ($\Te$) in $\Gamma$.
Let $m$, respectively $n$, be the number of conjugacy classes of subgroups of type $\Q$, respectively $\Te$, in $\Gamma$, 
and assume that $m > 0$, or $n > 0$, or both. 
Then \\
\begin{center}
$\dim_{\ef} \Cohomol^q(\Gamma;\thinspace \ef) = $\small$
\begin{cases}
 \beta^1 +m +k -r^{0,1}, & q = 1,\\
 \beta^2 +\beta^1 +2m+n +k -1 +c -r^{0,1} -r^{0,2}, & q \equiv 2 \mod 4,\\
  \beta^2 +\beta^1 +2(m+n) +k -1 +c -r^{0,3} -r^{0,2}, & q \equiv 3 \mod 4,\\
  \beta^2 +\beta^1 + m+n +k +c -r^{0,3}, & q \equiv 4 \mod 4, \\
    \beta^2 +\beta^1 +m +k +c -r^{0,1}, & q \equiv 5 \mod 4,\\
\end{cases}$\normalsize
\end{center}
where $\beta^q = \dim_{\F_2}\Cohomol^q(_\Gamma \backslash X ; \thinspace \F_2)$ for $q = 1, 2$;
$r^{0,q}$ is the rank of the $d_2^{0,q}$-differential of the equivariant spectral sequence;
and $c$ is the rank of the cokernel of the map
$\Cohomol^{1} (_\Gamma \backslash X; \thinspace \F_2) \rightarrow \Cohomol^{1} (_\Gamma \backslash X_s; \thinspace \F_2) $
induced by the inclusion $X_s \subset X$.
\\
We have the following vanishing results for these ranks:
\begin{itemize}
 \item $k = 0 \Rightarrow r^{0,3} = 0$,
 \item $c = 0 \Rightarrow r^{0,2} = 0$.
\end{itemize}

\end{theorem}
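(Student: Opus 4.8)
The tool is the $\Gamma$-equivariant cohomology spectral sequence for the action on the $2$-dimensional retract $X$,
$$E_1^{p,q} \;=\; \bigoplus_{\sigma \in \Sigma_p}\Cohomol^q(\Gamma_\sigma;\ef)\;\Longrightarrow\;\Cohomol^{p+q}(\Gamma;\ef),$$
where $\Sigma_p$ is a set of $\Gamma$-orbit representatives of the $p$-cells. Because $\dim X = 2$ we have $E_3=E_\infty$ and the only surviving differential on $E_2$ is $d_2^{0,q}\colon E_2^{0,q}\to E_2^{2,q-1}$; writing $r^{0,q}=\rank d_2^{0,q}$ this yields
$$\dim_{\ef}\Cohomol^q(\Gamma;\ef)\;=\;\bigl(\dim E_2^{0,q}-r^{0,q}\bigr)\;+\;\dim E_2^{1,q-1}\;+\;\bigl(\dim E_2^{2,q-2}-r^{0,q-1}\bigr).$$
So the entire theorem reduces to computing the three columns $\dim E_2^{0,q},\dim E_2^{1,q},\dim E_2^{2,q}$ for all $q$ and bookkeeping the two copies of $r^{0,\bullet}$ that the displayed sum produces. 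In the row $q=0$ the coefficient system is constant, so $E_2^{p,0}=\Cohomol^p(_\Gamma\backslash X;\ef)$, contributing $1,\beta^1,\beta^2,0,0,\dots$.

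Next I would analyse the coefficient systems $\sigma\mapsto\Cohomol^q(\Gamma_\sigma;\ef)$ for $q\ge1$. In an $\mathrm{SL}_2$ over a non-Gaussian imaginary quadratic ring, $-1$ is the unique element of order $2$ in any finite subgroup, so its $2$-Sylow is cyclic or quaternion and the only stabilizer types with non-trivial mod $2$ cohomology, up to odd factors, are $\Z/2$, $\Z/4$, $\Q$ and $\Te$ (the mixed types $\Z/6$ and the dicyclic group of order $12$ have the mod $2$ cohomology of $\Z/2$, resp. $\Z/4$). I would record that $\Cohomol^*(\Q;\ef)$ and $\Cohomol^*(\Te;\ef)$ are $4$-periodic, with $\dim_{\ef}\Cohomol^q$ equal to $1,2,2,1$ (resp. $1,0,0,1$) for $q\equiv0,1,2,3\pmod{4}$, and — this is what forces the mod $4$ shape of the answer — compute, via transgression and periodicity classes, the restrictions to the central $\langle-1\rangle$: these are surjective exactly for $q\equiv0\pmod{4}$, in which case $\Cohomol^q(\Gamma_\sigma;\ef)\cong\ef$ for every $\sigma$, the system is constant, and $E_2^{p,q}=\Cohomol^p(_\Gamma\backslash X;\ef)$ as in row $0$; for $q\not\equiv0$ the ``extra'' classes sit precisely on the cells whose stabilizer contains an element of order $4$, i.e. on $X_s$. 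I would also note that $\Gamma$ contains no $\Z/2\times\Z/2$, so by Farrell's criterion its Farrell--Tate cohomology at $2$ is $4$-periodic; since $4$ exceeds the virtual cohomological dimension $2$, the periodicity class lies in $\Cohomol^4(\Gamma;\ef)$ and is a permanent cycle, forcing $r^{0,q}=0$ for $q\equiv0\pmod{4}$ and so explaining why no such term occurs in the formula.

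The heart of the computation is to run torsion subcomplex reduction on $X_s$, producing a minimal $\Gamma$-complex $X_s^{\mathrm{red}}$ that does not change rows $q\ge1$ of the spectral sequence, and to classify its connected components: one circle carrying $\Z/4$-stabilizers for each of the $k$ conjugacy classes, one vertex with stabilizer $\Q$ for each of the $m$ classes, and one vertex with stabilizer $\Te$ for each of the $n$ classes (this requires showing that $\Z/4$-edges — in particular those emanating from $\Q$- or $\Te$-vertices — collapse, and that distinct $\Q$- and $\Te$-vertices are not joined after reduction). Feeding the cohomology of these pieces and the restriction maps $\Cohomol^*(\Q;\ef)\to\Cohomol^*(\Z/4;\ef)$, $\Cohomol^*(\Te;\ef)\to\Cohomol^*(\Z/4;\ef)$ into the $d_1$-differential gives the non-constant parts of $E_2^{0,q},E_2^{1,q},E_2^{2,q}$ for $q\not\equiv0\pmod{4}$ explicitly in terms of $\beta^1,\beta^2,k,m,n$, together with the cokernel rank $c$ of $\Cohomol^1(_\Gamma\backslash X;\ef)\to\Cohomol^1(_\Gamma\backslash X_s;\ef)$, which measures exactly how much of the degree-$1$ data on $X_s^{\mathrm{red}}$ is already visible on $_\Gamma\backslash X$ and therefore enters column $p=2$. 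Assembling the three surviving terms for the five residues of $q$ modulo $4$, and collecting the $r^{0,\bullet}$-contributions, produces the five cases of the formula.

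For the vanishing clauses: if $k=0$ there are no $\Z/4$-circles, so in degree $q\equiv3\pmod{4}$ the source $E_2^{0,q}$ reduces to its constant, permanent-cycle part and $d_2^{0,3}=0$, i.e. $r^{0,3}=0$; if $c=0$ the map $\Cohomol^1(_\Gamma\backslash X;\ef)\to\Cohomol^1(_\Gamma\backslash X_s;\ef)$ is onto, and chasing this through the edge homomorphisms of the two spectral sequences shows the target of $d_2^{0,2}$ is already exhausted by column $p=2$ in row $q=0$, so $d_2^{0,2}=0$ and $r^{0,2}=0$. The step I expect to be the real obstacle is legitimizing the reduction in rows $q\ge1$ despite the omnipresence of $-1$ — proving that $X_s$, not all of $X$, governs those rows once the constant part carried by $_\Gamma\backslash X$ has been split off — together with the delicate bookkeeping of how the $\ef$-cohomology ``background'' that $-1$ contributes to every stabilizer interacts with the $\Z/4$-, $\Q$- and $\Te$-contributions, which is exactly what is responsible for the parameter $c$ and the two differential ranks in the formula.
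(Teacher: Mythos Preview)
Your proposal contains a genuine structural error in the description of the reduced non-central $2$-torsion subcomplex. You claim that after reduction one obtains $k$ circles together with $m$ isolated $\Q$-vertices and $n$ isolated $\Te$-vertices, and you flag as the thing ``requiring proof'' that the $\Z/4$-edges emanating from $\Q$- or $\Te$-vertices collapse. They do not. Torsion subcomplex reduction only merges an edge into an adjacent vertex when the inclusion of stabilizers induces an isomorphism on mod-$2$ cohomology, and neither $\Z/4\hookrightarrow\Q$ nor $\Z/4\hookrightarrow\Te$ does so (for instance $\dim_{\ef}\Cohomol^1(\Q)=2\ne 1=\dim_{\ef}\Cohomol^1(\Z/4)$, and $\Cohomol^1(\Te)=0\ne\Cohomol^1(\Z/4)$). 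In the actual reduced quotient, each $\Q$-vertex is a trivalent bifurcation point and each $\Te$-vertex is a univalent endpoint; the paper then invokes Kr\"amer's theorems to show the only non-circle component types are $\edgegraph$, $\thetagraph$, and $\dumbbellgraph$, each carrying exactly two special vertices. That classification is what lets one compute $\dim_{\ef}E_2^{p,q}(X_s)$: for example, the count $\dim_{\ef}\ker d_1^{0,1}|_C=\beta_1(C)$ is established component-type by component-type, and the number of non-circle components equals $\tfrac{m+n}{2}$ only because of Kr\"amer's restriction. Your isolated-vertex model would give $\dim_{\ef}E_2^{0,1}=2m$ rather than the correct $m$, and $E_2^{1,q}(X_s)=0$ rather than the nonzero values in the paper's table, so the bookkeeping cannot possibly produce the stated formula.

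Your vanishing arguments also do not go through as written. For $k=0\Rightarrow r^{0,3}=0$ you assert that $E_2^{0,3}$ then ``reduces to its constant, permanent-cycle part,'' but in fact $E_2^{0,3}$ still contains the $(m+n)$-dimensional contribution from the $\Q$- and $\Te$-vertices, which is not a priori a permanent cycle. The paper's argument is different: it uses that $Sq^2$ vanishes on $\Cohomol^3(\Q)$ and $\Cohomol^3(\Te)$ while acting injectively on the target $E_2^{2,2}\subset\bigoplus\Cohomol^2(\Z/2)$, together with the compatibility $Sq^2 d_2=d_2 Sq^2$. Similarly, the implication $c=0\Rightarrow r^{0,2}=0$ is obtained via $Sq^1$, not by the edge-homomorphism chase you sketch.
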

Note that this is a description of the cohomology in all degrees (we start at $q=1$), 
with the trade-off that the Betti numbers of the quotient space,
as a purely topological ingredient, remain an input.
In order to prove this theorem, which we do in Section~\ref{sec: d2 vanishes}, 
we describe an approach based on non-central torsion subcomplex reduction \cite{BerkoveRahm}, which 
is especially useful for computing the small torsion in the cohomology of our congruence subgroups.
The data we need for evaluating the formula of the above theorem comes from a fundamental domain for the action of the groups we are investigating on hyperbolic $3$-space.
Fundamental domains in hyperbolic $3$-space for arbitrary arithmetic Kleinian groups can be computed using the algorithm of Aurel Page~\cite{Page}. 
However, for the extraction of torsion subcomplexes, we need the fundamental domains to be provided with a cell structure in 
which every cell stabilizer fixes its cell pointwise, a condition not  produced by Page's current implementation (version 1.0).
Mathematically, it is straightforward to construct from Page's fundamental domain a subdivided one on which all cells are fixed pointwise.
Unfortunately, implementation costs for that approach are very high. For our purposes, it was more useful to start
from scratch with a new algorithm.

We present this algorithm in Section~\ref{sec:Ford}. 
The algorithm constructs a fundamental domain for a congruence subgroup in a  $2$-dimensional equivariant retract of hyperbolic $3$-space by 
starting with a fundamental domain for the ambient full Bianchi group which already has the desired cell structure. 
Then the desired property of the cell structure is inherited by the fundamental domain that we are constructing for the congruence subgroup. 
This allows us to efficiently extract torsion subcomplexes as well as to determine the number and type of connected component types---we use these data to evaluate the formulas of Theorem~\ref{mainthm}.

We can furthermore say which types of connected components can appear in $2$-torsion subcomplexes for subgroups of Bianchi groups---we give a complete characterization in Corollary~\ref{cor:Kramer}, 
which we derive from new results of Norbert Kr\"amer \cite{Kraemer}.
They imply in particular that in the notation of Theorem~\ref{mainthm},
a single connected component not of type~$\circlegraph$ admits either $m = 0, n = 2$ or 
$m = 2, n = 0$.  

For the proof of Theorem~\ref{mainthm}, we extend the tools developed in~\cite{BerkoveRahm} in order to reduce these torsion subcomplexes and to analyze the equivariant spectral sequence converging to group cohomology. 
By analyzing the remaining differentials in the equivariant spectral sequence (the lemmas in Section~\ref{sec: d2 vanishes}), we are able to determine almost all of the mod-$2$ cohomology of the congruence subgroups we consider, and can often get a complete answer.

A remark on the above mentioned Vorono\"i cell complex approach: recently we were able to address the non-triviality of the action of cell stabilizers on their cells via machine calculations using the Rigid Facets Subdivision algorithm~\cite{BuiRahm}.
This allows us, in the appendix to this paper, to use Sch\"onnenbeck's computations of the Vorono\"i cell complex
as a check on the computations in this paper, and to illustrate which values the parameters in our formulas can take.

\subsection*{Organization of the paper}
In Section~\ref{sec: background}, 
we recall background material.   In addition to some results from spectral sequences, this includes the definition of the non-central $\ell$-torsion subcomplex, 
whose components provide an efficient way to calculate cohomology rings for the groups we consider.  
In Section~\ref{sec:components}, we derive from recent results of Kr\"amer a categorization of all possible non-central $\ell$-torsion subcomplexes
in congruence subgroups of Bianchi groups for all occurring prime numbers~$\ell$.  In Section~\ref{Betti formula} we provide new theorems which calculate the cohomology of all possible reduced $2$-torsion subcomplexes.  
In Section~\ref{sec: d2 vanishes} we show that the $d_2$ differential vanishes in many cases in our setting, and provide the proof of Theorem~\ref{mainthm}. 
Section~\ref{sec:Ford} presents our algorithm with which to construct fundamental domains for the action of the congruence subgroups.  
Finally, Section~\ref{Example computations} provides two example computations.

\subsection*{Acknowledgments} 
The authors are grateful to the ``Groups in Galway'' conference series; the plans for this paper were made during a meeting which was co-organized by the third author, and attended by the second author and the appendix's second author.  The second author thanks Alan Reid for introducing him to this topic, and for numerous helpful conversations.
The third is thankful for being supported by Gabor Wiese's University of Luxembourg grant AMFOR.
Special thanks go to Norbert Kr\"amer for helpful suggestions on our manuscript.

\section{Preliminaries} \label{sec: background}

In this section we provide, without proof, background on subcomplex reduction and some spectral sequence results.  We refer the interested reader to the appropriate references for more details.  In the sequel, most of our cohomology calculations have coefficients in the field $\ef$ with two elements (obviously with the trivial action), so we will assume all cohomology is with $\ef$ coefficients unless explicitly stated otherwise.

\subsection{Subcomplex reduction}  \label{The non-central torsion subcomplex} 
Let $\Gamma$ be any discrete group which acts on a finite-dimensional simplicial complex $X$ 
via a cellular $\Gamma$--action so that elements of cell stabilizers fix their cells point-wise.  What we have in mind here is for $\Gamma$ 
to be a congruence subgroup in a Bianchi group which acts in the well-known way on hyperbolic $3$-space.
For a fixed prime number $\ell$, the $\ell$--\emph{torsion subcomplex} is the collection of all cells of $X$ whose cell stabilizer contains some element of order $\ell$.
If, in addition, for every non-trivial finite ($\ell$-)group $G \subseteq \Gamma$ the fixed point set $X^G$ is acyclic, 
we have the following special case of Brown's proposition X.(7.2) in~\cite{Brown}:
\begin{proposition} \label{Brownian}
  There is an isomorphism between the $\ell$--primary parts of the Farrell cohomology of~$\Gamma$ and the
  $\Gamma$--equivariant Farrell cohomology of the $\ell$--torsion subcomplex.
\end{proposition}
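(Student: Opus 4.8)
The plan is to deduce the statement from Brown's Proposition~X.(7.2) of~\cite{Brown} after checking that its hypotheses are met for the action of $\Gamma$ on $X$, and to make explicit why, upon passing to $\ell$--primary parts, only the $\ell$--torsion subcomplex (call it $X_\ell$) survives. First I would record that $\Gamma$, being a finite index subgroup of a Bianchi group, has finite virtual cohomological dimension: it acts properly on the contractible $2$--dimensional $\Gamma$--equivariant retract of hyperbolic $3$--space, so $\operatorname{vcd}\Gamma = 2$ and the Farrell cohomology $\Farrell^*(\Gamma)$ is defined. Moreover the cell stabilizers are finite and there are only finitely many cells modulo $\Gamma$. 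Under the standing hypothesis that $X^G$ is acyclic for every non-trivial finite $\ell$--subgroup $G \leq \Gamma$ — which in the Bianchi setting holds because a finite subgroup of $\operatorname{PSL}_2(\bC)$ fixes a point of hyperbolic $3$--space, so its fixed point set there is a non-empty totally geodesic, hence convex and contractible, subspace, and this acyclicity is inherited by the $2$--dimensional equivariant retract — we are exactly in the situation to which Brown's proposition applies, yielding $\Farrell^*_\Gamma(X) \cong \Farrell^*(\Gamma)$.

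The second step is to run the equivariant Farrell cohomology spectral sequence
\[
 E_1^{p,q} \;=\; \bigoplus_{\sigma \in \Sigma_p} \Farrell^{q}(\Gamma_\sigma) \;\Longrightarrow\; \Farrell^{p+q}_\Gamma(X),
\]
where $\Sigma_p$ is a set of $\Gamma$--orbit representatives of the $p$--cells of $X$ and $\Gamma_\sigma$ is the finite stabilizer of $\sigma$ (the direct sum is finite by the previous paragraph). The decisive input is that the Farrell cohomology of a finite group $G$ is annihilated by $|G|$, so its $\ell$--primary component vanishes whenever $\ell \nmid |G|$. Hence, after taking $\ell$--primary parts, every summand indexed by a cell $\sigma$ with $\ell \nmid |\Gamma_\sigma|$ dies, and the surviving summands are precisely those indexed by the cells of $X_\ell$, since $\ell \mid |\Gamma_\sigma|$ is equivalent to $\Gamma_\sigma$ containing an element of order $\ell$. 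The $\Gamma$--equivariant inclusion $X_\ell \hookrightarrow X$ induces a morphism of these spectral sequences which, on $\ell$--primary $E_1$--pages, is the identity on the surviving summands (the $E_1$--terms for $X_\ell$ consist of exactly these summands, again using acyclicity of the relevant fixed point sets so that each orbit of cells contributes a single copy of $\Farrell^q(\Gamma_\sigma)$). Because $X$ and $X_\ell$ are finite dimensional, the spectral sequences occupy finitely many columns and converge, so an isomorphism on $E_1$ forces an isomorphism of abutments; taking $\ell$--primary parts is exact here (Farrell cohomology of $\Gamma$ is torsion in every degree), so it commutes with the passage to the abutment. Combining, $\Farrell^*(\Gamma)_{(\ell)} \cong \Farrell^*_\Gamma(X)_{(\ell)} \cong \Farrell^*_\Gamma(X_\ell)_{(\ell)}$, which is the claimed isomorphism.

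The point that I expect to require the most care is not the spectral sequence bookkeeping but the verification of the acyclicity hypothesis on the fixed point sets $X^G$: for hyperbolic $3$--space itself it is immediate from convexity, but one must confirm that it survives the replacement of hyperbolic $3$--space by the $2$--dimensional $\Gamma$--equivariant retract, which is exactly the place where the standing assumption that elements of cell stabilizers fix their cells pointwise is used (so that $X^G$ is an honest subcomplex) together with the properties of the retract. Once that is in hand, invoking Brown's proposition and performing the $\ell$--local truncation of the $E_1$--page is routine.
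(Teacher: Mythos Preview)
The paper does not give its own proof of this proposition; it simply records it as a special case of Brown's Proposition~X.(7.2) in~\cite{Brown}, with the acyclicity of the fixed point sets $X^G$ taken as a standing hypothesis. Your proposal correctly identifies this source and, going further than the paper, sketches why Brown's result holds: the equivariant Farrell spectral sequence has $E_1$--terms $\Farrell^q(\Gamma_\sigma)$ for finite stabilizers $\Gamma_\sigma$, and the $\ell$--primary part of $\Farrell^*(\Gamma_\sigma)$ vanishes unless $\ell \mid |\Gamma_\sigma|$, so after localizing at $\ell$ only the cells of the $\ell$--torsion subcomplex contribute. This is the standard argument and is correct.

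One minor remark: the proposition in the paper is stated for an arbitrary discrete group $\Gamma$ acting cellularly on a finite-dimensional complex $X$, with acyclicity of $X^G$ assumed rather than proved. Your first paragraph specializes to the Bianchi case (finite vcd equal to $2$, convexity of fixed point sets in hyperbolic $3$--space, passage to the retract), which goes beyond what the abstract statement requires. That discussion is not wrong, and it is exactly what one needs to justify \emph{applying} the proposition later in the paper, but it is not part of the proof of the proposition itself.
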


There are instances where the $\ell$--torsion subcomplex can be significantly reduced in size while preserving cohomological information.  
Heuristically, one determines situations, 
like adjacency conditions and isomorphisms on cohomology between stabilizer groups,
which allow cells to be merged.  
The result, which may have multiple components, is a reduced $\ell$--torsion subcomplex, 
and it can be significantly easier to work with than $X$ itself.  
Ad hoc use of elements of this approach appears in a number of places in the literature (see~\cite{Henn} or~\cite{Soule}, 
for example).  A systematic reduction procedure was developed by the third author in~\cites{RahmTorsion, RahmNoteAuxCRAS, AccessingFarrell}.  
One of the main results from~\cite{AccessingFarrell} is:
\begin{theorem} \label{pivotal}
  There is an isomorphism between the $\ell$--primary parts of the Farrell cohomology of~$\Gamma$ and the
  $\Gamma$--equivariant Farrell cohomology of a \emph{reduced} $\ell$--torsion subcomplex.
\end{theorem}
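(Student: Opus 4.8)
The plan is to bootstrap from Proposition~\ref{Brownian}, in the spirit of~\cite{AccessingFarrell}. That proposition already identifies the $\ell$--primary part of $\Farrell^*(\Gamma)$ with the $\Gamma$--equivariant Farrell cohomology $\Farrell^*_\Gamma(X_{(\ell)})$ of the full $\ell$--torsion subcomplex $X_{(\ell)} \subseteq X$. A reduced $\ell$--torsion subcomplex is obtained from $X_{(\ell)}$ by a \emph{finite} sequence of elementary reduction moves, so it suffices to verify that a single elementary move induces an isomorphism on $\Gamma$--equivariant Farrell cohomology; composing these isomorphisms then gives $\Farrell^*_\Gamma(X_{(\ell)}) \cong \Farrell^*_\Gamma(X^{\mathrm{red}}_{(\ell)})$ and the theorem follows.

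The computational device is the equivariant spectral sequence: for any $\Gamma$--CW--complex $Y$ with the point-wise stabilizer property,
\begin{equation*}
E_1^{p,q} = \prod_{\sigma \in \Sigma_p} \Farrell^q(\Gamma_\sigma) \;\Longrightarrow\; \Farrell^{p+q}_\Gamma(Y),
\end{equation*}
where $\Sigma_p$ is a set of orbit representatives of the $p$--cells and $\Gamma_\sigma$ the stabilizer of $\sigma$; equivalently $\Farrell^*_\Gamma(Y)$ is the cohomology of the total complex assembled from the $\Farrell^*(\Gamma_\sigma)$ using the cellular boundary maps together with the restriction maps $\Farrell^*(\Gamma_\tau) \to \Farrell^*(\Gamma_\sigma)$ along the inclusions $\Gamma_\sigma \hookrightarrow \Gamma_\tau$ (for $\sigma$ a face of $\tau$), all up to the usual conjugation ambiguity. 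In our application the torsion subcomplex is a graph, so this is a two--term complex indexed by vertex orbits and edge orbits.

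Next I would examine the two elementary moves. In a \emph{cut} move one deletes the $\Gamma$--orbit of an edge $e$ having an endpoint $v$ that meets $X_{(\ell)}$ only along $e$; the defining hypothesis is that $\Gamma_e \hookrightarrow \Gamma_v$ induces an isomorphism on $\ell$--primary Farrell cohomology. Setting $Y' = Y \smallsetminus (\Gamma\!\cdot\! e \cup \Gamma\!\cdot\! v)$, the relative equivariant cochain complex of the $\Gamma$--pair $(Y,Y')$ is the two--term complex $\Farrell^*(\Gamma_v) \to \Farrell^*(\Gamma_e)$ with differential the restriction; by hypothesis its kernel and cokernel vanish on the $\ell$--primary part, so $\Farrell^*_\Gamma(Y,Y') = 0$ there and the long exact sequence of the pair gives $\Farrell^*_\Gamma(Y) \cong \Farrell^*_\Gamma(Y')$ on $\ell$--primary parts. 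In a \emph{merge} move one replaces a chain $e_1, v, e_2$ (with $v$ of degree two in $X_{(\ell)}$ and each $\Gamma_{e_i} \hookrightarrow \Gamma_v$ inducing an isomorphism on $\ell$--primary Farrell cohomology) by a single edge $e$ with $\Gamma_e := \Gamma_{e_1}$; here one writes down the evident chain map between the two total complexes, equal to the identity off the affected orbits and using the isomorphisms $\Farrell^*(\Gamma_{e_i}) \cong \Farrell^*(\Gamma_v)$ to cancel the vertex contribution against an edge contribution, and checks that it is a quasi-isomorphism on $\ell$--primary parts. The periodicity of Farrell cohomology for the finite groups $\Gamma_\sigma$ makes the hypothesis ``isomorphism on $\ell$--primary parts'' stable under all the restriction maps appearing, so these cancellations are uniform in degree.

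The main obstacle is bookkeeping rather than conceptual. One must treat carefully the cases in which the $\Gamma$--orbit of the cell being removed or merged fails to be embedded in $Y$ --- a loop edge, a vertex orbit abutting several edge orbits that are themselves $\Gamma$--identified, or a vertex identified with a neighbour --- so that the clean ``delete one orbit'' picture has to be rephrased as a precise statement about which representatives in $\Sigma_p$ are affected and how the restriction maps reassemble afterwards. Setting up this formalism is the technical content; once it is in place, the proof is exactly the move-by-move verification above that the stabilizer hypotheses force the relevant relative term to vanish on the $\ell$--primary part, together with the finiteness of $\Gamma \backslash X$, which guarantees that the reduction procedure terminates after finitely many moves.
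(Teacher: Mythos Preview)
The paper does not prove Theorem~\ref{pivotal} at all: it is quoted verbatim as ``one of the main results from~\cite{AccessingFarrell}'' and used as a black box. So there is no proof in the paper to compare against.

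That said, your sketch is a faithful outline of the argument in~\cite{AccessingFarrell}: start from Proposition~\ref{Brownian}, then show that each elementary reduction step (amalgamating adjacent cells whose stabilizer inclusion induces an isomorphism on $\ell$--primary Farrell cohomology) leaves the equivariant Farrell cohomology unchanged, and iterate. Your use of the relative equivariant cochain complex for the ``cut'' move and the explicit quasi-isomorphism for the ``merge'' move is exactly the right mechanism, and your closing paragraph correctly identifies where the actual work lies (orbit bookkeeping when edges are loops or vertices are self-identified). One small point of terminology: the reduction procedure in~\cite{AccessingFarrell} is phrased in terms of a single type of move---merging an $n$--cell with an adjacent $(n{-}1)$--cell under the stated cohomological hypothesis---rather than your two named moves, but for the one-dimensional torsion subcomplexes relevant here your ``cut'' and ``merge'' are precisely the two combinatorial shapes this single move can take, so the content is the same.
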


We note that Farrell cohomology coincides above the virtual cohomological dimension with the standard cohomology of groups,
and by the periodicity that we have for the family of groups treated in the present paper, 
we can deduce almost all 
of the cohomology of groups from the Farrell cohomology (i.e., except for the $1$-dimensional part).
This is why, although we make no mention of Farrell cohomology beyond this section, Proposition \ref{Brownian} and Theorem \ref{pivotal}
allow us to make a significant reduction in our computations on the cohomology of groups in this work. 
In cases where the action of $\Gamma$ on $X$ has a trivial kernel, one can use a reduced $\ell$--torsion subcomplex to determine the Farrell 
cohomology of~$\Gamma$~\cite{AccessingFarrell}.   On the other hand, 
when the kernel contains $\ell$--torsion, then the $\ell$--torsion subcomplex is all of $X$, yielding no reduction at all. The following way around this difficulty was developed in~\cite{BerkoveRahm}.

\begin{df} \label{non-central torsion subcomplex}
The \emph{non-central $\ell$--torsion subcomplex} of a $\Gamma$--cell complex $X$ is the union of the cells of $X$ 
whose cell stabilizer in~$\Gamma$ contains some element of order $\ell$ which is not in the center of~$\Gamma$.
\end{df}

We specialize to the case of our paper.  Let $\ringOm$ be the ring of integers in the field $\rationals(\sqrt{-m})$, for $m \in \N$ square-free. 
For a subgroup $\Gamma$ of $\SLO$, let $\text{P}\Gamma$ be the central quotient group $\Gamma / (\{1, -1\} \cap  \Gamma )$ in $\PSLO$.
As noted in~\cite{BerkoveRahm}, the \textit{non-central} $\ell$--torsion subcomplex for $\Gamma$ is the same as the $\ell$--torsion subcomplex for $\text{P}\Gamma$.  
In the sequel, we use this correspondence to identify the \textit{non-central} $\ell$--torsion subcomplex for the action of a congruence subgroup of $\SLO$ on hyperbolic $3$--space as the $\ell$--torsion subcomplex of the image of the subgroup in $\PSLO$.

It is well-known~\cite{SchwermerVogtmann} that the finite orders which can occur for elements of $\SLO$ are 1, 2, 3, 4 or 6; so in $\PSLO$ the finite orders are 1, 2 and 3.
From this, one can completely determine the finite subgroups of the $\PSLtwo$ Bianchi groups (see~\cite{binaereFormenMathAnn9}, for example), 
as well as their preimages in the $\SLtwo$ Bianchi groups: the cyclic groups $\Z/2$, $\Z/4$, $\Z/6$,  
the quaternion group $\Q$ of order $8$, the dicyclic group $\Di$ of order $12$, and the binary tetrahedral group $\Te$ of order $24$.  
Hence we only need to analyze the $2$--torsion subcomplex and the $3$--torsion subcomplex.

To characterize the latter $\ell$--torsion subcomplexes, recall that any element of finite order in $\PSLO$ fixing a point inside hyperbolic $3$-space~$\hyper$ acts as a rotation of finite order.
By Felix Klein's work, we also know that any torsion element~$\alpha$ is elliptic and hence fixes some geodesic line.
So our $\ell$--torsion subcomplexes are one-dimensional and consist of rotation axes of elements of finite order.
We can now deduce that for $\ell = 2$ or $3$, the $\Gamma$-quotient of the $\ell$--torsion subcomplex is a finite graph (we use either the finiteness of a fundamental domain, or a study of conjugacy classes of finite subgroups as in~\cite{Kraemer}). 

For the $\Gamma$-quotient of the $3$--torsion subcomplex, Theorem~2 of~\cite{AccessingFarrell} tells us that there are only two types of connected components, and gives the number of each.  Hence the mod $3$ cohomology groups of $\Gamma$ are explicitly expressed in terms of conjugacy classes of order-$3$-rotation subgroups and triangle subgroups in $\Gamma$.
The reduced non-central $2$--torsion subcomplex  for a congruence subgroup $\Gamma$ in~$\SLO$ is considerably more complicated, and its explicit determination is an emphasis of this paper. To start, it has the following properties:
\begin{itemize}
  \item All  edges of the non-central $2$--torsion subcomplex have stabilizer type $\Z/4$. 
  \item The stabilizer type of the vertices that yield an end-point in the quotient graph is the binary tetrahedral group $\Te$.
  \item  After carrying out the reduction, there are no vertices with precisely two neighboring vertices.  
  \item The stabilizer type of the vertices where there is a bifurcation in the quotient graph is the quaternion group $\Q$ with eight elements.
  \item  Prior to reduction, there are no vertices with more than three neighboring vertices in the quotient graph.
\end{itemize}
We note that the degree of each vertex in the $\Gamma$-quotient of $X$ is the same as the number of distinct conjugacy classes of $\Z/4$ in the vertex stabilizer.
We also observe that all stabilizers which contain a copy of $\Q$ are associated to vertices.
This observation will be used in Section~\ref{Section:E2 page}.

\subsection{Spectral sequences}\label{ssec: specseq}
We use two spectral sequences in this paper.  The primary one is the equivariant spectral sequence in cohomology, since it is particularly well-suited to our situation, that is, a cellular action of $\Gamma$ on the contractible cell complex $X$.  This spectral sequence is developed in detail in~\cite{Brown}*{Chapter VII}.  
For our purposes, we note that the $E_1$ page of the spectral sequence has the form 
\[
E_1^{i,j} \cong \prod\limits_{\sigma \thinspace\in \thinspace _\Gamma \backslash X^{(i)}} \Homol^j(\Gamma_\sigma)
\]
and converges to the cohomology of the group, $\Cohomol^{i+j}(\Gamma)$, where $X^{(i)}$ is a set of $\Gamma$-representatives of $i$--cells in $X$, and $\Gamma_\sigma \subseteq \Gamma$ is the stabilizer of the cell $\sigma$.  The $d_r$ differential of this spectral sequence has bidegree $(r,1-r)$.

We summarize a number of useful properties of the equivariant spectral sequence:
\begin{enumerate}
\item\label{ss1} The differential $d_1$ can be described (with $\ef$ coefficients) as the sum of restriction maps in cohomology between cell stabilizers (cohomology 
analog of~\cite{Brown}*{VII.8})
\[
\prod_{\sigma \thinspace\in \thinspace _\Gamma\backslash X^{(i)}} \Homol^j (\Gamma_\sigma)
\xrightarrow{\ d_1^{i,j}}\ 
\prod_{\tau \thinspace\in \thinspace _\Gamma\backslash X^{(i+1)}} \Homol^j(\Gamma_\tau).
\]
\item\label{ss2} There is a product on the spectral sequence, $E_r^{pq}  \otimes E_r^{st} \rightarrow E_r^{p+s,q+t}$, which is compatible with the standard cup product on $\Cohomol^*(\Gamma)$ ~\cite{Brown}*{VII.5}.
\item\label{ss3} On the $E_2$--page,
the products in  $E_2^{0,*}$, the vertical edge, can be identified with the restriction of the usual cup product to the classes in ${\prod}_ {\sigma \in _\Gamma \backslash X^{(0)} }   \Cohomol^q(\Gamma_{\sigma})$ which are in $\ker d_1$ ~\cite{Brown}*{X.4.5.vi}.
 \item  Let $r \geq 2$.  Then there are well-defined Steenrod operations, $Sq^k: E_r^{p,q} \rightarrow E_r^{p,q+k}$ when $0 \leq k \leq q$ \mbox{\cite{Singer}*{Theorem 2.15}.}
 \item\label{ss4}  When $k \leq q - 1$, $d_2 Sq^k u = Sq^k d_2 u$, i.e., the Steenrod operations commute with the differential ~\cite{Singer}*{Theorem 2.17}.
\end{enumerate}

In Sections \ref{Betti formula} and \ref{sec: d2 vanishes}, we also use the Lyndon--Hochschild--Serre spectral sequence in cohomology associated to the extension 
\begin{equation}
1 \rightarrow H \rightarrow \Gamma \buildrel \pi \over \longrightarrow  \Gamma/H \rightarrow 1  \label{sss:shortexactseq}.
\end{equation}
This short exact sequence yields an associated fibration of classifying spaces.  The Serre spectral sequence for this fibration 
has $E_2^{i,j} \cong \Homol^i(\Gamma/H; \Homol^j(H;M))$  for untwisted coefficients $M$ and converges to $\Homol^{i+j}(\Gamma;M)$.   For the development of this spectral sequence, we refer readers to either ~\cite{AdemMilgram}*{IV.1} or ~\cite{Brown}*{VII.5}.  A more general version for twisted coefficients can be found in \cite{MS}.

\subsection{Calculation of the second page of the spectral sequence} \label{Section:E2 page} 

One can use the reduced non-central $2$--torsion subcomplex of $\Gamma$ as input into the equivariant spectral sequence.  It is  one of 
the main results in~\cite{BerkoveRahm} that the $E_2$ page of the spectral sequence can be computed knowing limited data about the action
of the congruence subgroup $\Gamma$ via linear fractional transformations on hyperbolic $3$-space, $\mathbb{H}^3$.  
Let $X$ be the $2$-dimensional cellular retraction of $\mathbb{H}^3$ from
which we build the Ford domain for the action of $\Gamma$ (see~\cite{RahmFuchs}).  We denote by $X_s$ the 
non-central $2$-torsion subcomplex of $X$, and by $X_s^\prime$ the $0$-dimensional subcomplex of $X$ consisting of vertices
whose stabilizer is either $\Q$ or $\Te$.   We define $c$ to be the rank of the cokernel of 
$\Cohomol^{1} (_\Gamma \backslash X; \thinspace \F_2) \rightarrow \Cohomol^{1} (_\Gamma \backslash X_s; \thinspace \F_2) $
induced by the inclusion $X_s \subset X$.    That is, the case $c > 0$ corresponds to the situation where loops in the quotient of the non-central $2$-torsion subcomplex are identified in the quotient space.
We also define $v$ to be the number of conjugacy classes of subgroups isomorphic to $\Q$ in $\Gamma$; these occur in vertex
stabilizers isomorphic to $\Q$ or $\Te$.   There is a geometric meaning for $v$, related to the $2$-torsion subcomplex components 
found in $X$ as listed in Section~\ref{sec:components}.  Specifically, it is shown in~\cite{BerkoveRahm} that $v$ counts the number of vertices
in the non-$\circlegraph$ $2$-torsion subcomplex components of $X$.  Finally, we define $\sign(v)$ to equal $0$ when $v = 0$ and 
$1$ otherwise.  In the following theorem from~\cite{BerkoveRahm}, we add the trivial case when $X_s$ is empty.  That situation
did not arise in that paper, but it is straightforward to work it out.

 \begin{theorem}\cite{BerkoveRahm}*{Corollary 21}\label{thm:E2page}
 The $E_2$ page of the equivariant spectral sequence with $\F_2$--coefficients
 associated to the action of $\Gamma$ on $X$ is concentrated in the columns $p \in \{0, 1, 2\}$.
 If $X_s$ is empty, then $E_2^{p,q} \cong \Cohomol^p(_\Gamma \backslash X; \thinspace \F_2)$ for all $p, q \geq 0$;
otherwise this $E_2$ page has the following form:  
\[  
\begin{array}{l | cccl}
q \equiv 3 \mod 4 &  E_2^{0,3}(X_s)     &   E_2^{1,3}(X_s) \oplus (\ef)^{a_1}  &   (\ef)^{a_2} \\
q \equiv 2 \mod 4 &  \Cohomol^2_\Gamma(\xsp) \oplus (\ef)^{1 -\sign(v)}     &    (\ef)^{a_3}&   \Cohomol^2(_\Gamma \backslash X)  \\
q \equiv 1 \mod 4 &  E_2^{0,1}(X_s)     &  E_2^{1,1}(X_s) \oplus  (\ef)^{a_1} &   (\ef)^{a_2} \\
q \equiv 0 \mod 4   & \F_2  &   \Cohomol^1(_\Gamma \backslash X) &  \Cohomol^2(_\Gamma \backslash X) \\
\hline  &  p = 0 & p = 1 &  p = 2
\end{array}
\]
with 
\[\begin{array}{ll}
a_1 & =  \chi(_\Gamma \backslash X_s) -1 +\beta^1(_\Gamma \backslash X) +c   \\
a_2 & =  \beta^{2} (_\Gamma \backslash X) +c \\
a_3 & =  \beta^{1} (_\Gamma \backslash X) +v -\sign(v)
\end{array}
\]   
where $\beta^q = \dim_{\F_2}\Cohomol^q(_\Gamma \backslash X ; \thinspace \F_2)$ for $q = 1, 2$; and $\chi(_\Gamma \backslash X_s)$  
 is the usual Euler characteristic of the orbit space of the $2$-torsion subcomplex ${}_\Gamma \backslash X_s$.  
\end{theorem}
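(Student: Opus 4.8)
The plan is to split the statement into two halves. The explicit $E_2$ table in the case $X_s \neq \emptyset$, together with the formulas for $a_1$, $a_2$, $a_3$, is exactly \cite{BerkoveRahm}*{Corollary 21}, which I would cite rather than reprove; I only indicate how it is obtained. The genuinely new ingredient is the degenerate case $X_s = \emptyset$, which I would prove directly.

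To recover the table I would begin from the $E_1$ page $E_1^{i,j} \cong \prod_{\sigma \in {}_\Gamma \backslash X^{(i)}} \Cohomol^j(\Gamma_\sigma)$, with $d_1$ the alternating sum of restriction maps between cell stabilizers (property~(\ref{ss1})). Since $X$ is $2$-dimensional, $E_1^{i,j} = 0$ for $i \geq 3$, which forces concentration in the columns $p \in \{0,1,2\}$; and the bottom row $j = 0$ is the mod-$2$ simplicial cochain complex of ${}_\Gamma \backslash X$, so $E_2^{i,0} \cong \Cohomol^i({}_\Gamma \backslash X;\ef)$, the row $q \equiv 0 \bmod 4$. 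For $j \geq 1$ the only cells with a nonzero contribution are those of $X_s$ together with the omnipresent central stabilizer $\{\pm 1\}$, and the stabilizer types $\Z/4$, $\Q$, $\Te$ all have mod-$2$ cohomology periodic of period dividing $4$, which is the source of the $4$-periodicity of the rows. The real work is then to split off the $\{\pm 1\}$-contribution from each $\Cohomol^*(\Gamma_\sigma;\ef)$ --- equivalently, to compare with the equivariant spectral sequence of $\text{P}\Gamma$ acting on $X$, whose $2$-torsion subcomplex is exactly $X_s$ --- and thereby to identify the column $p = 0$ with degree-zero cohomology of $X_s$ and of the $\Q/\Te$-vertex set $\xsp$, the column $p = 1$ with their degree-one cohomology, and the column $p = 2$ with $\Cohomol^2({}_\Gamma \backslash X;\ef)$ plus a term counting how loops of ${}_\Gamma \backslash X_s$ become null-homologous in ${}_\Gamma \backslash X$. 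An Euler-characteristic/Mayer--Vietoris count over the components of ${}_\Gamma \backslash X_s$, using that interior edges carry $\Z/4$ while branch vertices carry $\Q$ and end vertices carry $\Te$ (Section~\ref{The non-central torsion subcomplex}), then assembles $a_1$, $a_2$, $a_3$ and the correction $1 - \sign(v)$. I expect this bookkeeping --- pinning down the $p = 0$ and $p = 1$ columns with the precise roles of $\xsp$, $c$, $v$ and $\sign(v)$ --- to be the main obstacle; it is carried out in full in \cite{BerkoveRahm}, so I would not redo it.

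For the new case $X_s = \emptyset$ the argument is short. If the non-central $2$-torsion subcomplex is empty, then for no cell $\sigma$ does the image of $\Gamma_\sigma$ in $\text{P}\Gamma$ contain an element of order $2$; as the finite subgroups occurring in $\text{P}\Gamma$ are cyclic or polyhedral, each such image is cyclic of order $1$ or $3$, so $\Gamma_\sigma$ is one of $\{1\}$, $\Z/2$, $\Z/3$, $\Z/6$. But $\Gamma$ contains the central element $-1$, which acts trivially on $X$ and hence lies in every cell stabilizer, so in fact $\Gamma_\sigma \in \{\Z/2, \Z/6\}$ throughout. For both of these, restriction to $\{\pm 1\}$ is an isomorphism $\Cohomol^j(\Gamma_\sigma;\ef) \cong \Cohomol^j(\Z/2;\ef) \cong \ef$ for every $j \geq 0$ (the $3$-torsion part of $\Z/6$ contributes nothing mod $2$; formally, the composite of restriction with corestriction is multiplication by the odd index $[\Gamma_\sigma : \{\pm 1\}]$, so restriction is injective, and both sides are one-dimensional in each degree). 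Transitivity of restriction then identifies, for any face inclusion $\Gamma_\tau \hookrightarrow \Gamma_\sigma$, the corresponding component of $d_1$ in degree $j$ with the identity of $\ef$. Choosing the nonzero class in each $\Cohomol^j(\Gamma_\sigma;\ef)$ therefore exhibits, for every $j \geq 0$, the $j$-th row of $(E_1, d_1)$ as the mod-$2$ simplicial cochain complex of ${}_\Gamma \backslash X$; taking cohomology gives $E_2^{p,q} \cong \Cohomol^p({}_\Gamma \backslash X;\ef)$ for all $p, q \geq 0$, as asserted.
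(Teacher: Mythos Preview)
Your proposal is correct and matches the paper's treatment: the paper simply cites \cite{BerkoveRahm}*{Corollary 21} for the case $X_s \neq \emptyset$ and remarks that the degenerate case $X_s = \emptyset$ ``did not arise in that paper, but it is straightforward to work it out.'' Your explicit argument for the empty case---reducing every cell stabilizer to $\Z/2$ or $\Z/6$ via the identification of $X_s$ with the $2$-torsion subcomplex of $\text{P}\Gamma$, and then using that restriction to the central $\{\pm 1\}$ is a mod-$2$ cohomology isomorphism to identify each row of $(E_1,d_1)$ with the simplicial cochain complex of ${}_\Gamma\backslash X$---is exactly the straightforward verification the paper has in mind, spelled out in more detail than the paper itself provides.
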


Theorem \ref{thm:E2page} implies that $\Cohomol^*(\Gamma)$ is eventually periodic with period $4$.  We remark that $\dim_{\F_2} \Cohomol^2_\Gamma(\xsp)$ is twice the number of orbits of vertices of stabilizer type~$\Q$ (cf.~\cite{BerkoveRahm}*{note 30}).
Finally, we note: 

\begin{lemma}\cite{BerkoveRahm}*{Lemma 22} \label{splitting over components}
The terms $E_2^{p,q}(X_s)$ split into direct summands each with support on one connected component
of the quotient of the \emph{reduced} non-central $2$--torsion subcomplex (again denoted $X_s$). 
\end{lemma}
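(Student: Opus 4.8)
The plan is to show that the $d_1$-differential of the equivariant spectral sequence for the action of $\Gamma$ on $X_s$ (the reduced non-central $2$-torsion subcomplex) respects the partition of the $E_1$ page induced by the connected components of the quotient graph ${}_\Gamma\backslash X_s$, and then to pass to cohomology along $d_1$.

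First I would recall, from property~\ref{ss1} of the equivariant spectral sequence, that for the $1$-dimensional $\Gamma$-complex $X_s$ one has
\[
E_1^{i,j} \;\cong\; \prod_{\sigma \,\in\, {}_\Gamma\backslash X_s^{(i)}} \Homol^j(\Gamma_\sigma), \qquad i \in \{0,1\},
\]
with $d_1^{0,j}$ the alternating sum of the restriction maps $\Homol^j(\Gamma_\sigma)\to \Homol^j(\Gamma_\tau)$ over incident pairs: a vertex orbit $[\sigma]$ and an edge orbit $[\tau]$ having $[\sigma]$ as one of its two endpoints in the quotient graph. The key observation is purely combinatorial: incidence of cells in $X_s$ descends to incidence in ${}_\Gamma\backslash X_s$, and two cells incident in a graph lie in a common connected component. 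Hence the component of $d_1$ from the $\Homol^j(\Gamma_\sigma)$-factor to the $\Homol^j(\Gamma_\tau)$-factor vanishes unless $\sigma$ and $\tau$ map into the same connected component of ${}_\Gamma\backslash X_s$.

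Next I would assemble the splitting. Writing ${}_\Gamma\backslash X_s = \bigsqcup_{C} C$ for the decomposition into connected components, assign to the factor $\Homol^j(\Gamma_\sigma)$ of $E_1^{i,j}$ the component $C$ containing the image of $\sigma$; by the previous step $d_1$ preserves this grading, so the cochain complex $(E_1^{\bullet,j}, d_1)$ is the direct sum over $C$ of its sub-cochain-complexes supported on $C$. Since cohomology commutes with finite direct sums, taking $d_1$-cohomology yields $E_2^{p,q}(X_s) \cong \bigoplus_{C} E_2^{p,q}(C)$, which is the assertion; compatibility with the product on the vertical edge follows from property~\ref{ss3}, since classes supported on distinct components have disjoint supports and hence zero cup product.

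The point requiring the most care — and the one I expect to be the main obstacle — is to make explicit that the reduction procedure of~\cite{BerkoveRahm} is compatible with the component decomposition: each reduction move (merging an edge-path across vertices whose stabilizer inclusions induce isomorphisms on cohomology) is local to a single connected component of the quotient graph, so the reduced complex has the same set of components, still carries a cellular $\Gamma$-action with cells fixed pointwise, and hence has an equivariant spectral sequence of exactly the shape used above. Once this is recorded, the splitting of $E_2^{p,q}(X_s)$ established for the unreduced $X_s$ transfers verbatim to the reduced one.
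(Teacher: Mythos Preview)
Your argument is correct and is essentially the standard one: the $E_1$ page is indexed by cells of the quotient graph, $d_1$ only couples incident cells, and incidence respects the component decomposition, so the cochain complex $(E_1^{\bullet,j},d_1)$ splits over components and hence so does its cohomology. Note, however, that the present paper does not supply a proof of this lemma at all; it is simply quoted as \cite{BerkoveRahm}*{Lemma 22}, so there is no ``paper's own proof'' to compare against. Your remarks about the reduction procedure being local to components and about the cup product vanishing across components are fine supplementary observations, though the former is not strictly needed if one works directly with the reduced subcomplex (which is already a $\Gamma$-cell complex with the required pointwise-fixing property), and the latter goes beyond what the lemma asserts.
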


\section{Connected component types of the 2-torsion subcomplex}\label{sec:components}
At first glance, the number of possible connected component types satisfying the properties listed in Section \ref{sec: background} is countably infinite. 
But, in fact, there are only four types.  This reduction is made possible by using recent results of  Kr\"amer~\cite{Kraemer}, which we summarize next.

\begin{corollary}[to theorems of Kr\"amer]\label{cor:Kramer}
Let $\Gamma_0(\eta)$ be a congruence subgroup in $\SLtwo(\ringOm)$, 
where $\ringOm$ is the ring of integers in the imaginary quadratic field $\rationals(\sqrt{-m})$ ($m$ square-free)  
with discriminant $\Delta \neq -4$,
and subject to the congruence condition that the lower left entry is contained in the ideal $\eta \subsetneqq \ringOm$.
Let $t$ be the number of distinct prime divisors of $\Delta$.
We consider the action of $\Gamma_0(\eta)$ on the associated symmetric space, hyperbolic 3-space.
Then the reduced non-central 2-torsion subcomplex consists exclusively of connected components of types
$\circlegraph$, $\edgegraph$, $\thetagraph$ and $\dumbbellgraph$, with multiplicities as follows.
\end{corollary}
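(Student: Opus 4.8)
The plan is to translate Kr\"amer's classification of conjugacy classes of finite subgroups of $\Gamma_0(\eta)$ into the combinatorial structure of the reduced non-central $2$-torsion subcomplex, using the five structural properties listed in Section~\ref{sec: background}. Those properties already pin down the local shape of the quotient graph $_\Gamma\backslash X_s$ after reduction: every edge has stabilizer $\Z/4$, every degree-$1$ vertex has stabilizer $\Te$, every degree-$3$ vertex has stabilizer $\Q$, there are no degree-$2$ vertices, and no vertex has degree exceeding $3$. Hence every connected component of the reduced complex is a finite graph all of whose vertices have degree $1$ or $3$. The first step is the purely graph-theoretic observation that such a graph, if it has at most a bounded number of vertices, is one of a short list; the content of the corollary is that Kr\"amer's results bound the number of $\Q$- and $\Te$-conjugacy classes per component so severely that only $\circlegraph$ (no vertices), $\edgegraph$ (two $\Te$-vertices joined by one edge), $\thetagraph$ (two $\Q$-vertices joined by three edges), and $\dumbbellgraph$ (two $\Q$-vertices, each carrying one further loop-edge back to itself, i.e. the two $\Q$-vertices each have one incident loop and are joined by a single edge) can occur.

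The main step is therefore to extract from \cite{Kraemer} the statement that within a single connected non-$\circlegraph$ component, the number $m$ of $\Q$-classes and the number $n$ of $\Te$-classes satisfy either $(m,n)=(0,2)$ or $(m,n)=(2,0)$ — this is exactly the assertion flagged in the introduction just before Corollary~\ref{cor:Kramer}. I would proceed as follows. First, recall that a non-central order-$2$ element of $\Gamma_0(\eta)$ maps in $\PSLO$ to an order-$2$ rotation, and its axis is a connected piece of the $2$-torsion subcomplex; the reduction process of~\cite{BerkoveRahm} merges edges across $\Z/4$-stabilized segments, so a component of the reduced complex corresponds to a $\Gamma$-orbit of maximal ``trees of axes'' meeting at the $\Q$- and $\Te$-stabilized vertices. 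Second, invoke Kr\"amer's enumeration of conjugacy classes of the relevant finite subgroups ($\Z/4$, $\Q$, $\Te$, $\Di$) in $\Gamma_0(\eta)$ in terms of the arithmetic of $\eta$ and the discriminant $\Delta$: Kr\"amer's formulas express these counts, and — crucially — how the $\Z/4$-classes distribute among the $\Q$- and $\Te$-classes (which $\Z/4$ embeds in which $\Q$, and which $\Q$ embeds in which $\Te$). Counting incidences: a $\Te$ contains exactly one conjugacy class of $\Q$ and the $\Z/4$'s inside it account for the single incident edge in the quotient (after reduction), giving a degree-$1$ vertex; a $\Q$ not inside any $\Te$ contains three $\Z/4$-classes, giving a degree-$3$ vertex. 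Third, a connected-component bookkeeping argument: a connected graph with only degree-$1$ and degree-$3$ vertices and with a total vertex count of exactly $2$ (which is what Kr\"amer's bounds force, via $m+n \le 2$ on each component together with the parity constraint from the handshake lemma, since the number of odd-degree vertices is even) is either $\edgegraph$ (two degree-$1$ vertices, realized when $n=2$), or has two degree-$3$ vertices (realized when $m=2$) and three edges between them; in the latter case the three edges are either all ``straight'' ($\thetagraph$) or two of them are loops ($\dumbbellgraph$), and one checks both occur. The $\circlegraph$ case is the residual one where the component has no $\Q$- or $\Te$-vertex at all, i.e. a cycle of $\Z/4$-edges that reduces to a single circle.

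The hardest part will be the precise matching of Kr\"amer's arithmetic conjugacy-class counts to the incidence data of the quotient graph — in particular, verifying that in the $m=2$ case the two $\Q$-classes are always joined by precisely three $\Z/4$-edges (not, say, by a single edge with the remaining $\Z/4$'s forming loops on only one side), and that both the $\thetagraph$ and $\dumbbellgraph$ configurations of those three edges genuinely arise for suitable $\eta$. This requires reading off from \cite{Kraemer} not just the cardinalities but the full incidence pattern of the poset of finite subgroups under inclusion, and is where a careful case analysis on $\eta$ and the prime divisors of $\Delta$ (the parameter $t$) enters; the multiplicity table, which we state after the corollary, records the outcome of exactly this case analysis.
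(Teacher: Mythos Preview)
Your overall plan---translate Kr\"amer's subgroup classification into the combinatorics of the quotient graph---matches the paper's, but you misidentify the key input from~\cite{Kraemer}. You propose to extract a per-component cardinality bound $m+n\le 2$ and then run a handshake-lemma enumeration. Kr\"amer does not state such a bound; what the paper actually cites is stronger and more structural. Satz~9.7 of~\cite{Kraemer} asserts that every $\Te$-stabilized vertex already sits as an endpoint of an $\edgegraph$-component. This single statement simultaneously forbids mixed $(\Q,\Te)$-components---ruling out the $(m,n)=(1,1)$ ``$\rho$-graph'' that your parity argument does \emph{not} exclude---and forces every component containing a $\Te$-vertex to be precisely $\edgegraph$. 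Satz~9.9 then asserts that, under complementary arithmetic hypotheses on $\eta$ and $m$, every maximal-$\Q$ vertex sits on a $\thetagraph$-component (respectively a $\dumbbellgraph$-component); this directly excludes larger $\Q$-only graphs such as $K_4$, which your enumeration never addresses. The residual $\circlegraph$ case is then immediate.

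So the paper's route bypasses the graph-theoretic case analysis entirely: once Satz~9.7 and~9.9 are invoked, the component types are handed to you outright, and the remaining work is reading off existence conditions and multiplicities from Satz~9.4 and~9.6. Your approach would succeed only if you could recover the content of Satz~9.7 and~9.9 from cardinality and incidence data alone, which amounts to re-proving them; the cleaner fix is to cite them directly, as the paper does, and drop the handshake argument.
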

\begin{adjustwidth}{1.2cm}{}
\begin{itemize}
 \item[$\edgegraph$] For every connected component of type $\edgegraph$, 
 there must be two conjugacy classes of binary tetrahedral subgroups in $\Gamma_0(\eta)$.
 Such subgroups exist precisely when $\eta = \langle 2\rangle$, $m \equiv 3 \bmod 8$ 
and for all prime divisors $p$ of $\Delta$, $p \equiv 1$ or $3 \pmod 8$.
There are then precisely $2^{t-1}$ connected components of this type.

\item[$\thetagraph$ \& $\dumbbellgraph$] The existence of either type $\thetagraph$ and $\dumbbellgraph$ requires two conjugacy classes of maximal $\Q$ groups per connected component.
The congruence subgroup $\Gamma_0(\eta)$ contains maximal $\Q$ subgroups if and only if $m \not\equiv 3 \bmod 4$ and in addition,
\begin{itemize}
 \item[$\bullet$] either $\eta = \langle 2\rangle$ and \\
 $p \equiv 1 \bmod 4$ for all odd prime divisors $p$ of $\Delta$,
 \item[$\bullet$] or $\eta^2 = \langle 2\rangle$ and \\
  for all divisors $D \in \mathbb{N}$ of $\Delta$, we have
 $D \not\equiv 7 \bmod 8$.
\end{itemize}

\item[$\dumbbellgraph$] A sufficient condition for all maximal $\Q$ groups to sit on 
$\dumbbellgraph$-components
is that $\eta^2 =\langle 2\rangle$, $m \equiv 2 \bmod 4$ and there exist $x, y \in \Z$ with $x^2 -m y^2 = 2$. 
When this condition holds, there are $2^{t-1}$ connected components of type $\dumbbellgraph$.

\item[$\thetagraph$] Conversely, sufficient conditions for all maximal $\Q$ groups to sit on $\thetagraph$-components 
are that $\eta =\langle 2\rangle$; or that $m \not\equiv 2 \bmod 4$; or that $x^2 -m y^2 \ne 2$ for all $x, y \in \Z$.
The number of connected components of type $\thetagraph$ is then 
\begin{itemize}
 \item[$\bullet$] $2^{t-1}$, if $\eta =\langle 2\rangle$.
 \item[$\bullet$] $2^{t-1}$, if $\eta^2 =\langle 2\rangle$ and $p \equiv  1 \bmod 8$ for all odd prime divisors $p$ of~$\Delta$.
 \item[$\bullet$] $2^{t-2}$, if $\eta^2 =\langle 2\rangle$ and $p \equiv \pm 3 \bmod 8$ for some prime divisor $p$ of~$\Delta$.
\end{itemize}
Note that the existence conditions directly imply that $p \not\equiv 7 \bmod 8$.
\end{itemize}
\end{adjustwidth}

\smallskip 

\noindent The remaining conjugacy classes of cyclic groups of order $4$ (i.e., those not involved in the components mentioned above)
constitute $\circlegraph$ components.  

\begin{proof}[Proof of the corollary]
We first provide a sketch of the overall argument.  The theorems that we quote are results in Kr\"amer's preprint~\cite{Kraemer}, so we refer the reader there for more details.
Satz 9.7 in \cite{Kraemer} implies that every stabilizer group of binary tetrahedral type occurs only
as an endpoint of a component of type $\edgegraph$.
For maximal $\Q$ vertex stabilizer subgroups, the conditions for the existence of components of types $\thetagraph$ and $\dumbbellgraph$
are complementary, so these are the only components that can occur in this case. 
The remaining connected components admit exclusively $\Z/4$ vertex stabilizers,
so they are of type $\circlegraph$.      
\\
For the individual component types, we observe:
\begin{adjustwidth}{1.2cm}{}
 \begin{itemize}
  \item[$\edgegraph$]
Satz 9.4.(i) states the conditions given in this Corollary for the existence of binary tetrahedral groups.
Satz 9.6.(i) specifies the number of conjugacy classes of binary tetrahedral type as $2^t$.  
There are two such conjugacy classes needed for each $\edgegraph$ component.

\item[$\thetagraph$ \& $\dumbbellgraph$]
The proof of conditions for existence are given in Satz 9.4.(iii).

\item[$\dumbbellgraph$] The proof of the sufficient condition for $\dumbbellgraph$ 
is given with Satz 9.9.(i).  This condition yields, 
as is stated in Satz~9.9.(i), that for all odd prime divisors $p$ of $\Delta$, 
we have $p \equiv 1 \pmod 8$. 
Therefore by Satz 9.6.(iii), 
there are $2^{t}$ conjugacy classes of maximal $\Q$-groups.

\item[$\thetagraph$] The proof of the sufficient conditions for $\thetagraph$ is given in Satz 9.9.(ii).  The proof of the number of conjugacy classes is given in  Satz 9.6.(iii).
 \end{itemize}
\end{adjustwidth}
\end{proof}
\noindent
\textbf{Examples.}  We have explicitly computed the type and number of components for a number of fundamental domains computed with the algorithm in Section~\ref{sec:Ford}.
The following results for the non-central $s$-torsion subcomplex quotient,
$_{\Gamma} \backslash X_s$ for $s \in \{2, 3\}$, are in accordance with Corollary~\ref{cor:Kramer}.
$$\begin{array}{|c|c|c|c|} \hline
\text{Example} & \Gamma  &  _{\Gamma} \backslash X_3  &  _{\Gamma} \backslash X_2  \\
\hline 
(1)& \Gamma_0(1 +\sqrt{-2}) \subset \SLtwo(\Z[\sqrt{-2}]) & \circlegraph & \text{empty} \\  

(2) & \Gamma_0(2) \subset \SLtwo(\Z[\sqrt{-2}]) & \text{empty} & \thetagraph \\

(3) & \Gamma_0(5) \subset \SLtwo(\Z[\sqrt{-2}]) & \circlegraph \circlegraph & \circlegraph \circlegraph \\

(4) & \Gamma_0(\sqrt{-2}) \subset \SLtwo(\Z[\sqrt{-2}]) & \text{empty} & \dumbbellgraph \\

(5) & \Gamma_0(3 +2\sqrt{-2}) \subset \SLtwo(\Z[\sqrt{-2}]) & \text{empty} & \circlegraph \circlegraph \\

(6) & \Gamma_0(2) \subset \SLtwo(\Z[\frac{-1+\sqrt{-11}}{2}]) & \circlegraph \circlegraph & \edgegraph \\

(7) & \Gamma_0(\frac{-1+\sqrt{-11}}{2}) \subset \SLtwo(\Z[\frac{-1+\sqrt{-11}}{2}]) & \circlegraph & \text{empty} \\
\hline
\end{array}$$
Details for Examples (3) and (4) in the table above are given in Section~\ref{Example computations}.

\section{Cohomology of $2$-torsion subcomplexes}\label{Betti formula}

The cohomology of the $2$-torsion complexes is built from the cohomology of its finite stabilizers.  We get from the former to the latter via a spectral sequence argument, using a general description of the $2$-torsion subcomplex.   We begin with a list of the cohomology rings for the stabilizer groups, where polynomial classes are given in square brackets, exterior classes are given in parentheses, and a subscript on a class denotes its degree.   Since SU$_2$ acts freely on $S^3$, the rings are all periodic of period dividing $4$.  

\begin{prop} (See~\cite{AdemMilgram})  \label{prop:SLfincoh}      
The mod $2$ cohomology rings of the finite subgroups of $\SLtwo(\ringOm)$ are:
\begin{alignat}{3}
  &  \Cohomol^*(\Z/4)  & \cong  &    \Cohomol^*(\Di)     \cong \ef[e_2](b_1)  \nonumber \\
  &  \Cohomol^*(\Z/2)  & \cong  &   \Cohomol^*(\Z/6)  \cong  \ef[e_1]  \nonumber \\
  &  \Cohomol^*(\Q)    & \cong  & \   \ef[e_4](x_1, y_1) / \langle R \rangle, \text{ with } R \text{ generated by }  x_1^2 + x_1 y_1+y_1^2 \text{ and } x_1^2 y_1 + x_1 y_1^2 \nonumber \\
  &  \Cohomol^*(\Te)   & \cong  &  \  \ef[e_4](b_3) \nonumber
\end{alignat}
\end{prop}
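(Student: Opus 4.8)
The plan is to handle the three cyclic groups by standard computations and to reduce $\Di$ and $\Te$ to $\Z/4$ and $\Q$ respectively, exploiting that the mod $2$ cohomology of $\Z/3$ is trivial in positive degrees (as $3$ is invertible in $\ef$); the one genuine calculation is then that of $\Cohomol^*(\Q;\ef)$. For $\Z/2$ one uses the classical $\Cohomol^*(\Z/2;\ef)=\ef[e_1]$. Since $\Z/6\cong\Z/2\times\Z/3$ and $\Cohomol^{>0}(\Z/3;\ef)=0$, the K\"unneth theorem (equivalently the Lyndon--Hochschild--Serre spectral sequence of \eqref{sss:shortexactseq} with $H=\Z/3$, which collapses at $E_2$) gives $\Cohomol^*(\Z/6;\ef)\cong\Cohomol^*(\Z/2;\ef)=\ef[e_1]$. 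For $\Z/4$ I would quote the well-known mod $2$ cohomology of a cyclic $2$-group of order at least $4$: a degree-one exterior class $b_1$ with $b_1^2=0$ (this last fact, i.e. $Sq^1 b_1=0$, is exactly what separates $\Z/4$ from $\Z/2$) together with a polynomial class $e_2$ in degree two, so $\Cohomol^*(\Z/4;\ef)\cong\ef[e_2](b_1)$. For $\Di$, which fits into $1\to\Z/3\to\Di\to\Z/4\to1$ with $\Z/3$ normal, the Lyndon--Hochschild--Serre spectral sequence again collapses at $E_2$ since $\Cohomol^{>0}(\Z/3;\ef)=0$, giving a ring isomorphism $\Cohomol^*(\Di;\ef)\cong\Cohomol^*(\Z/4;\ef)\cong\ef[e_2](b_1)$.

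For $\Cohomol^*(\Q;\ef)$ --- which is classical and recorded in \cite{AdemMilgram} --- I would proceed as follows. Since $\Q\subset\mathrm{SU}_2$ acts freely on $S^3$, the orbit space $S^3/\Q$ is a closed orientable $3$-manifold with fundamental group $\Q$, and $\Cohomol^*(\Q;\ef)$ is periodic of period $4$; hence it suffices to determine $\Cohomol^q(\Q;\ef)$ for $q\le 3$ together with its ring structure, after which multiplication by a periodicity class $e_4\in\Cohomol^4$ fixes everything in higher degrees. One has $\Cohomol^1(\Q;\ef)=\operatorname{Hom}(\Q,\ef)\cong\ef^2$ because the abelianization of $\Q$ is $\Z/2\times\Z/2$; fix a basis $x_1,y_1$, on which $\operatorname{Out}(\Q)\cong S_3\cong\mathrm{GL}_2(\ef)$ acts in the standard way. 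Poincar\'e duality for $S^3/\Q$ with $\ef$-coefficients, whose cohomology ring agrees in degrees $\le 3$ with that of $\Q$, then forces $\dim_{\ef}\Cohomol^2=2$ and $\dim_{\ef}\Cohomol^3=1$, with $\Cohomol^2$ and $\Cohomol^3$ both generated by products of degree-one classes, so the kernel of $\ef[x_1,y_1]\to\Cohomol^*(\Q;\ef)$ needs one new generator in each of degrees $2$ and $3$. By $\mathrm{GL}_2(\ef)$-invariance of this kernel, the degree-two generator must be the unique invariant quadric $x_1^2+x_1y_1+y_1^2$, and, modulo that relation, the degree-three generator must be the unique remaining invariant, $x_1^2y_1+x_1y_1^2$ (the product of the three nonzero linear forms). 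As the quotient ring $\ef[x_1,y_1]/\langle R\rangle$ is then concentrated in degrees $\le 3$, periodicity yields $\Cohomol^*(\Q;\ef)\cong\ef[e_4](x_1,y_1)/\langle R\rangle$. (Alternatively one can run the Lyndon--Hochschild--Serre spectral sequence of the central extension $1\to\Z/2\to\Q\to\Z/2\times\Z/2\to1$, as in \cite{AdemMilgram}.)

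Finally, for $\Te$ I would use the normal subgroup $\Q\trianglelefteq\Te$ with $\Te/\Q\cong\Z/3$. Since $3$ is invertible in $\ef$, the Lyndon--Hochschild--Serre spectral sequence of $1\to\Q\to\Te\to\Z/3\to1$ has all higher rows of its $E_2$-page vanishing and degenerates into a ring isomorphism $\Cohomol^*(\Te;\ef)\cong\Cohomol^*(\Q;\ef)^{\Z/3}$, where $\Z/3$ acts on $\Q$ by cyclically permuting its three cyclic subgroups of order $4$, hence by an order-three element of $\mathrm{GL}_2(\ef)$ on $\Cohomol^1(\Q;\ef)$. Computing the invariants of $\ef[e_4](x_1,y_1)/\langle R\rangle$ under this action degree by degree: a three-cycle in $\mathrm{GL}_2(\ef)$ has no nonzero fixed vector on $\Cohomol^1$ or on the two-dimensional $\Cohomol^2$, the one-dimensional $\Cohomol^3$ is fixed and supplies a class $b_3$, and $e_4$ is fixed; since $b_3^2$ lies in $\Cohomol^6(\Q;\ef)^{\Z/3}=e_4\cdot\Cohomol^2(\Q;\ef)^{\Z/3}=0$, the class $b_3$ is exterior and $\Cohomol^*(\Te;\ef)\cong\ef[e_4](b_3)$.

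I expect the only real obstacle to be the $\Q$ case: for every other group in the list the answer drops out of a one-line spectral-sequence collapse or a standard reference, whereas pinning down the exact relation ideal $\langle R\rangle$ of $\Cohomol^*(\Q;\ef)$ requires either careful bookkeeping --- with an explicit period-four free resolution, or with the Poincar\'e-duality and Bockstein/Steenrod structure of $S^3/\Q$ --- or a direct appeal to \cite{AdemMilgram}; it is the single place where generalities do not by themselves determine the ring.
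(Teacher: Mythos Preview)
Your argument is correct and in fact supplies more than the paper does: Proposition~\ref{prop:SLfincoh} is stated there without proof, merely referring the reader to~\cite{AdemMilgram}. Your reductions of $\Z/6$, $\Di$, and $\Te$ to $\Z/2$, $\Z/4$, and $\Cohomol^*(\Q;\ef)^{\Z/3}$ via collapsing Lyndon--Hochschild--Serre spectral sequences (the normal subgroup of order $3$ having trivial mod~$2$ cohomology) are the standard ones, and your degree-by-degree computation of the $\Z/3$-invariants in $\Cohomol^*(\Q;\ef)$ is accurate.

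The one step that deserves a closer look is the Poincar\'e duality argument for $\Q$. Duality on $S^3/\Q$ indeed gives the $\ef$-dimensions $1,2,2,1$ in degrees $0$ through $3$ and a perfect pairing $\Cohomol^1\times\Cohomol^2\to\Cohomol^3$, but by itself it does \emph{not} force $\Cohomol^2$ to be spanned by products of degree-one classes --- the cup map $\Cohomol^1\otimes\Cohomol^1\to\Cohomol^2$ could a priori have image of dimension $0$ or $1$. What closes this gap is the Bockstein: every homomorphism $\Q\to\Z/4$ lands in the $2$-torsion $\{0,2\}\subset\Z/4$ (as the relation $jij^{-1}=i^{-1}$ forces $2\phi(i)=0$), so the reduction $\Cohomol^1(\Q;\Z/4)\to\Cohomol^1(\Q;\ef)$ is zero and $Sq^1\colon\Cohomol^1\to\Cohomol^2$ is injective, hence bijective. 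Thus $x_1^2$ and $y_1^2$ already span $\Cohomol^2$, and then the $S_3\cong\mathrm{Out}(\Q)$-symmetry forces $x_1y_1=x_1^2+y_1^2$, i.e.\ the relation $x_1^2+x_1y_1+y_1^2=0$; the second relation follows similarly. Alternatively, the LHSS of the central extension (which you mention) yields both relations directly as the transgressions $d_2(z_1)=x_1^2+x_1y_1+y_1^2$ and $d_3(z_1^2)=Sq^1 d_2(z_1)=x_1^2y_1+x_1y_1^2$. Either route completes your argument; your final ring descriptions are correct.
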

The calculation of the cohomology of the reduced $2$ torsion subcomplexes from a graph of groups description requires knowledge of restriction maps in cohomology between finite subgroups.  The proof of the following Proposition  can be found in~\cite{BerkoveRahm}*{Proposition 10}.
\begin{prop}\label{prop:restriction}
The following are the nontrivial restriction maps involving polynomial generators in cohomology for finite subgroups of $\SLtwo(\ringOm)$:
\[
\begin{array}{ll}
\Z/4: & res^{\Z/4}_{\Z/2} (e_2)  = e_1^2  \\
\Di:   & res^{\Di}_{\Z/6}(e_2)  = res^{\Di}_{\Z/2}(e_2) = e_1^2 \\
\Q:   & res^{\Q}_{\Z/4}(e_4)  =  e_2^2, \ res^{\Q}_{\Z/2}(e_4)  = e_1^4  \\
\Te:  & res^{\Te}_{\Z/6}(e_4)  = res^{\Te}_{\Z/2}(e_4) = e_1^4,  \ res^{\Te}_{\Z/4}(e_4)  = e_2^2 \\
\end{array}{}
\]
In addition, $res^{\Z/6}_{\Z/2}$ and $res^{\Di}_{\Z/4}$ are isomorphisms.
\end{prop}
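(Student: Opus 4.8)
The plan is to compute each restriction map by realizing the source and target cohomology rings via group extensions and the Lyndon--Hochschild--Serre spectral sequence, or, where more convenient, by identifying the relevant polynomial generators as Euler classes of faithful complex representations and restricting those representations directly. First I would fix, for each finite subgroup $G \in \{\Z/4, \Di, \Q, \Te\}$, the faithful $2$-dimensional complex representation coming from the inclusion $G \hookrightarrow \SLtwo(\bC)$; the degree-$4$ polynomial generator $e_4 \in \Cohomol^4(G)$ for $G \in \{\Q, \Te\}$ is the top Chern class (equivalently the mod $2$ Euler class) of this representation, and likewise the degree-$2$ generator $e_2 \in \Cohomol^2(\Z/4) \cong \Cohomol^2(\Di)$ is the Euler class of the corresponding line bundle (for $\Z/4$) or the appropriate $2$-dimensional summand (for $\Di$). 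Since Chern/Euler classes are natural with respect to restriction of representations, the restriction of $e_4$ to a subgroup $H$ is the Euler class of the restricted representation $V|_H$, which splits or stays irreducible according to the branching rules; this immediately yields $res^{\Q}_{\Z/4}(e_4) = e_2^2$ (the restricted representation is $L \oplus \bar L$ with $c_1(L) = e_2$), $res^{\Q}_{\Z/2}(e_4) = e_1^4$ (it becomes $4$ copies of the nontrivial character, Euler class $e_1^4$), and analogously for $\Te$, using that the index-$3$ subgroup $\Z/4 \subset \Te$ and the index-$12$ subgroup $\Z/2$ inherit the same branching.

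For the $\Z/4$ and $\Di$ rows, I would argue more elementarily: $\Cohomol^*(\Z/4) \cong \ef[e_2](b_1)$ with $e_2$ the reduction of the generator of $\Cohomol^2(\Z/4;\Z) \cong \Z/4$, and $e_1 \in \Cohomol^1(\Z/2)$ the generator; the restriction $\Cohomol^2(\Z/4) \to \Cohomol^2(\Z/2)$ on integral cohomology is the map $\Z/4 \to \Z/2$ induced by inclusion, which is surjective, and after mod $2$ reduction the image of $e_2$ is the nonzero class $e_1^2$ (it cannot be zero since $e_2$ itself is nonzero mod $2$ and the Bockstein relates it to $b_1$; alternatively use that $\Z/2 \subset \Z/4$ is the unique subgroup and the transfer argument forces nontriviality). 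For $\Di$, the dicyclic group of order $12$ fits in an extension $1 \to \Z/4 \to \Di \to \Z/3 \to 1$ (or one can use $1 \to \Z/3 \to \Di \to \Z/4 \to 1$); since $\Cohomol^*(\Z/3;\ef) = 0$ in positive degrees, the inflation $\Cohomol^*(\Z/4;\ef) \to \Cohomol^*(\Di;\ef)$ from the quotient (using the second extension) is an isomorphism, which is exactly the assertion that $res^{\Di}_{\Z/4}$ is an isomorphism; composing with $res^{\Z/4}_{\Z/2}$ already computed gives $res^{\Di}_{\Z/2}(e_2) = e_1^2$, and $res^{\Di}_{\Z/6}$ is handled by the same argument applied to $\Z/6 = \Z/2 \times \Z/3$, whose mod $2$ cohomology equals that of its $\Z/2$ factor, giving $res^{\Z/6}_{\Z/2}$ an isomorphism and $res^{\Di}_{\Z/6}(e_2) = e_1^2$.

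The main obstacle I anticipate is keeping the identifications of the named generators $e_2, e_4, b_1, b_3$ consistent across all four groups and their subgroup inclusions, since the periodicity classes are only well-defined up to the ambiguity of which faithful representation one uses, and a careless choice could turn an equality like $res^{\Q}_{\Z/4}(e_4) = e_2^2$ into $e_2^2$ plus lower-order correction terms from the relation ideal $\langle R \rangle$ in $\Cohomol^*(\Q)$. To control this I would pin down $e_4$ for $\Q$ and for $\Te$ by the single requirement that it restricts to $e_2^2$ on the (respective) $\Z/4$ subgroup — which is legitimate because $\Cohomol^4(\Q) \cong \ef$ is generated by one class and the restriction to $\Z/4$ is injective in degree $4$ (the $\ef[e_4]$-polynomial part injects since $\Z/4$ has index prime to... no — rather, because $res^{\Q}_{\Z/4}$ on the top periodicity class is detected via the nonzero Euler class computation above). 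Once $e_4$ is fixed this way, the remaining restrictions $res^{\Q}_{\Z/2}$ and the $\Te$ restrictions follow by transitivity of restriction through the tower $\Z/2 \subset \Z/4 \subset \Q \subset \Te$ (using also $\Z/2 \subset \Z/6 \subset \Te$), and one only has to check compatibility at the single branch point $\Z/6$, where the mod $2$ cohomology is insensitive to the $\Z/3$ factor. The reference \cite{AdemMilgram} supplies the ring structures in Proposition \ref{prop:SLfincoh}, so no independent computation of those is needed; all that remains is the naturality bookkeeping, which is routine once the generators are normalized.
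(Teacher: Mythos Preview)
The paper itself gives no proof of this proposition; it simply cites \cite{BerkoveRahm}*{Proposition 10}. So there is nothing in the present paper to compare your argument against directly.

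Your strategy---identifying the periodicity generators $e_4$ (respectively $e_2$) with the mod~$2$ second Chern class (respectively first Chern class) of the defining representation $G \hookrightarrow \SLtwo(\bC)$, and then using naturality of Chern classes under restriction of representations---is sound and is a standard way to compute these maps. The key observation, that $\Cohomol^4(\Q;\ef)$ and $\Cohomol^4(\Z/4;\ef)$ are each one-dimensional so that the restriction is forced to be either zero or $e_4 \mapsto e_2^2$, is correctly identified; the Euler class computation then rules out zero.

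A few small corrections. First, $[\Te : \Z/4] = 6$, not $3$; the index-$3$ subgroup of $\Te$ is $\Q$. This does not affect your argument, which only uses the subgroup tower $\Z/2 \subset \Z/4 \subset \Q \subset \Te$. Second, in your treatment of $\Di$, knowing that inflation $\Cohomol^*(\Z/4) \to \Cohomol^*(\Di)$ is an isomorphism is not literally the same statement as $res^{\Di}_{\Z/4}$ being an isomorphism; you need either to observe that the extension $1 \to \Z/3 \to \Di \to \Z/4 \to 1$ splits (it does, so restriction is a one-sided inverse to inflation), or---more cleanly---that $\Z/4$ is a Sylow $2$-subgroup of $\Di$, whence $res^{\Di}_{\Z/4}$ is injective in mod~$2$ cohomology by the transfer and therefore an isomorphism by the dimension count from Proposition~\ref{prop:SLfincoh}. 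The same Sylow argument handles $res^{\Z/6}_{\Z/2}$ in one line. Third, the restricted representation $V|_{\Z/2}$ is two copies of the nontrivial \emph{complex} character (not four), but the Chern class computation still yields $c_2 = c_1(\mathrm{sign})^2 = (e_1^2)^2 = e_1^4$, so your conclusion stands.
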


The restriction map between $\Cohomol^1(\Q)$ and $\Cohomol^1(\Z/4)$ is trivial or not depending on the choice of $\Z/4$ subgroup.  This is the subject of the next lemma.

\begin{lemma}\label{lem:Q8res}
Given a class in $\Cohomol^1(\Q)$, its image under $res^{\Q}_{\Z/4}$ is non-trivial for two copies of $\Z/4 \subseteq \Q$ but trivial on the third.
\end{lemma}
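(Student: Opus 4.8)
The plan is to compute everything inside the quaternion group $\Q$ explicitly, using the well-known structure of $\Cohomol^*(\Q;\ef)$ recalled in Proposition~\ref{prop:SLfincoh}. Recall $\Cohomol^1(\Q;\ef) \cong \ef^2$, spanned by classes $x_1, y_1$ dual to the two generators of the abelianization $\Q^{\rm ab} \cong \Z/2 \times \Z/2$. The three subgroups of type $\Z/4$ in $\Q$ are the cyclic subgroups generated by $i$, $j$, and $k$ (in the standard presentation $\Q = \langle i, j \mid i^2 = j^2 = (ij)^2\rangle$), and each has index $2$. Since $\Cohomol^1(-;\ef) = \mathrm{Hom}(-,\ef)$, the restriction map $res^{\Q}_{\Z/4}$ on $\Cohomol^1$ is dual to the map $\Z/4 \to \Q \to \Q^{\rm ab}$, i.e. it records the image of a generator of the given $\Z/4$ in $\Q^{\rm ab}$.

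First I would identify the images of $i$, $j$, $k$ in $\Q^{\rm ab} \cong \Z/2 \times \Z/2$: writing $\Q^{\rm ab}$ additively with basis $\bar\imath, \bar\jmath$, these images are $\bar\imath = (1,0)$, $\bar\jmath = (0,1)$, and $\bar k = \overline{ij} = (1,1)$ — all three nonzero. Next I would observe that a class $\phi \in \Cohomol^1(\Q;\ef) = \mathrm{Hom}(\Q^{\rm ab},\ef)$ restricts trivially to $\langle g \rangle \cong \Z/4$ precisely when $\phi(\bar g) = 0$. So for a fixed nonzero $\phi$, its kernel is a one-dimensional subspace of $\Q^{\rm ab} \cong \ef^2$, hence contains exactly one of the three nonzero vectors $\bar\imath, \bar\jmath, \bar k$. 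Therefore $res^{\Q}_{\Z/4}(\phi)$ vanishes on exactly one of the three copies of $\Z/4$ and is nonzero on the other two, which is precisely the claim.

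The argument is essentially a counting statement about the three nonzero vectors of $\ef^2$ and the three hyperplanes (lines) through the origin, so there is no real obstacle; the only point requiring care is confirming that the three $\Z/4$ subgroups of $\Q$ do map to the three \emph{distinct} nonzero elements of $\Q^{\rm ab}$ — equivalently, that no $\Z/4 \subset \Q$ lies in the Frattini subgroup $[\Q,\Q] = Z(\Q) = \{\pm 1\}$. This is immediate since each such subgroup has order $4 > 2$. One should also note for later use (this is what feeds into the $\Cohomol^*(\Q)$-module structure and the spectral sequence computations) that on the polynomial generator $e_4$ the restriction $res^{\Q}_{\Z/4}(e_4) = e_2^2$ is nonzero for \emph{all three} copies of $\Z/4$, by Proposition~\ref{prop:restriction}; the asymmetry occurs only in degree $1$.
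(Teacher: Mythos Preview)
Your proof is correct and follows essentially the same idea as the paper's: both reduce to the Klein four-group $\Q^{\rm ab}=\Q/Z(\Q)$ and track the three nonzero elements (the images of $i$, $j$, $k$) under the three nonzero functionals. The paper phrases the reduction as a comparison of Lyndon--Hochschild--Serre spectral sequences for $1\to\Z/2\to\Q\to\Kleinfourgroup\to 1$ and $1\to\Z/2\to\Z/4\to\Z/2\to 1$, whereas you use the more elementary identification $\Cohomol^1(-;\ef)=\mathrm{Hom}(-,\ef)$; since inflation $\Cohomol^1(\Kleinfourgroup)\to\Cohomol^1(\Q)$ is an isomorphism here, these amount to the same computation, and your version is arguably cleaner.
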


\begin{proof}
Let $\Kleinfourgroup$ denote the Klein four-group.  There are three elements of order $2$ in $\Kleinfourgroup$, and their central extensions become the   
three copies of $\Z/4$ in $\Q$.  Fix $x$ and $y$, two generators of $\Kleinfourgroup$, viewed multiplicatively, 
and let $x_1$ and $y_1 \in \Cohomol^1(\Kleinfourgroup)$
be their corresponding duals in cohomology.  We identify the three subgroups of order $2$ in $\Kleinfourgroup$ as
$Z_1 = \langle x \rangle$, $Z_2 = \langle y \rangle$, and $Z_3 = \langle xy \rangle$, with corresponding
cohomology generators $z_{1,1}$, $z_{1,2}$, and $z_{1,3}$.

Determination of most of the restriction maps $res^{\Kleinfourgroup}_{Z_i}$ is straightforward, but as $xy$ is the product of the distinguished generators, in 
$res^{\Kleinfourgroup}_{Z_3}$ both $x_1$ and $y_1$ are sent to $z_{1,3}$.  So
\[
 \begin{array}{ll}
    res^{\Kleinfourgroup}_{Z_1}(x_1) = z_{1,1} & res^{\Kleinfourgroup}_{Z_1}(y_1) = 0       \\
    res^{\Kleinfourgroup}_{Z_2}(x_1) = 0      & res^{\Kleinfourgroup}_{Z_2}(y_1) = z_{1,2}  \\    
    res^{\Kleinfourgroup}_{Z_3}(x_1) = z_{1,3} & res^{\Kleinfourgroup}_{Z_3}(y_1) = z_{1,3}  \\
\end{array}
\]  
We determine $res^{\Kleinfourgroup}_{Z_i}(x_1 + y_1)$ by addition, and the result for $\Q$ follows from a comparison of the Lyndon--Hochschild--Serre spectral sequence associated to 
\[
1 \rightarrow \Z/2 \rightarrow \Q \rightarrow \Kleinfourgroup \rightarrow 1
\]
with the Lyndon--Hochschild--Serre spectral sequence for 
\[
1 \rightarrow \Z/2 \rightarrow \Z/4 \rightarrow \Z/2 \rightarrow 1.
\]
\end{proof}

We calculate the $\Gamma$-equivariant mod $2$ cohomology of a generic component of the non-central reduced $2$-torsion subcomplex 
via a graph of groups description and the equivariant spectral sequence.  
We use Proposition~\ref{prop:restriction} and Lemma~\ref{lem:Q8res} to determine the $d_1$ differential.  
By the cohomology periodicity of the stabilizer subgroups, it is sufficient to restrict ourselves to dimensions $q \leq 4$.  
The restriction map between $\Cohomol^1(\Q)$ and $\Cohomol^1(\Z/4)$ requires the most attention, 
since each $\Q$ vertex stabilizer contains three non-conjugate copies of $\Z/4$, 
a fact reflected in the three edges incident to that vertex.  
There are two cases, according as whether two of the incident edges form a loop or not.  
We call an edge which forms a loop in $\Gamma \backslash X_s$ a \textit{looped edge}.

\begin{lemma}\label{lem:Q8noloop}
Let $\Q$ be a vertex stabilizer with no looped edges.  
Then under $d_1$, any class in $\Cohomol^1(\Q)$ restricts isomorphically to exactly two copies of $\Cohomol^1(\Z/4)$ and is trivial on the third.
\end{lemma}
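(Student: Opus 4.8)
The plan is to read off the claim directly from Lemma~\ref{lem:Q8res} once we understand how the $d_1$ differential is assembled out of restriction maps. Recall from property~(\ref{ss1}) of the equivariant spectral sequence that, with $\ef$ coefficients, $d_1^{i,j}$ is the (signed, but over $\ef$ the signs are irrelevant) sum of the restriction maps $\Homol^j(\Gamma_\sigma) \to \Homol^j(\Gamma_\tau)$ as $\tau$ ranges over the codimension-one faces of $\sigma$. For a vertex $v$ in the reduced non-central $2$-torsion subcomplex with stabilizer $\Q$, the three edges incident to $v$ correspond exactly to the three conjugacy classes of $\Z/4$ subgroups of $\Q$ (this is the remark at the end of Section~\ref{sec: background} that the degree of a vertex equals the number of distinct conjugacy classes of $\Z/4$ in its stabilizer), and the relevant piece of $d_1^{0,1}$ is the map $\Cohomol^1(\Q) \to \bigoplus_{i=1}^{3} \Cohomol^1(\Z/4_{(i)})$ whose $i$-th component is $res^{\Q}_{\Z/4_{(i)}}$.

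The hypothesis ``no looped edges'' guarantees that the three incident edges are genuinely distinct edges of the quotient graph, each contributing its own copy of $\Cohomol^1(\Z/4)$ to the target of $d_1^{0,1}$; no two of these restriction maps land in the same summand. Hence the behaviour of $d_1$ on $\Cohomol^1(\Q)$ is literally the direct sum of the three restriction maps $res^{\Q}_{\Z/4_{(i)}}$. First I would invoke Lemma~\ref{lem:Q8res}: for a fixed nonzero class in $\Cohomol^1(\Q)$, its image under $res^{\Q}_{\Z/4}$ is nonzero for two of the three copies of $\Z/4$ and zero for the third. Since $\Cohomol^1(\Z/4) \cong \ef$ is one-dimensional (from Proposition~\ref{prop:SLfincoh}, as the degree-one part is spanned by the exterior class $b_1$) and the restriction of the exterior generator of $\Cohomol^1(\Q)$ to $\Cohomol^1(\Z/4)$, when nonzero, must hit the generator, each nontrivial component is in fact an isomorphism. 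Combining, a class in $\Cohomol^1(\Q)$ restricts isomorphically to exactly two of the incident $\Cohomol^1(\Z/4)$ summands and trivially to the third, which is the assertion.

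I do not anticipate a serious obstacle here; the lemma is essentially a bookkeeping consequence of Lemma~\ref{lem:Q8res} together with the identification of incident edges with $\Z/4$-conjugacy classes. The one point that needs a careful sentence is the verification that the ``two versus one'' split does not depend on which nonzero class of $\Cohomol^1(\Q)$ we choose in a way that would spoil the statement --- but since $\Cohomol^1(\Q) \cong \ef^2 \setminus \{0\}$ has the single nonzero scaling on each line and Lemma~\ref{lem:Q8res} is phrased for an arbitrary class, the pattern (non-trivial on two, trivial on one) holds uniformly, and that is all the present lemma claims. A secondary point worth stating explicitly is that in the no-loop case the three target summands are distinct, so there is no cancellation in $d_1$ coming from two restriction maps sharing a codomain; this is exactly what fails in the looped case and motivates the separate treatment of that situation in the next lemma.
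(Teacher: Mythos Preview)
Your proof is correct and follows essentially the same approach as the paper: the paper's proof is the one-line ``This follows directly from Lemma~\ref{lem:Q8res}'' together with the observation that the three nonzero classes in $\Cohomol^1(\Q)$ are detected on different pairs of $\Z/4$ subgroups. You have simply spelled out the bookkeeping (identifying incident edges with $\Z/4$-conjugacy classes, noting that distinct edges mean distinct target summands, and that a nonzero map to a one-dimensional space is surjective) that the paper leaves implicit.
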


\begin{proof}
 This follows directly from Lemma~\ref{lem:Q8res}.  We note that the three classes in $\Cohomol^1(\Q)$ are detected on different pairs of $\Z/4$ subgroups.
\end{proof}

\begin{lemma}\label{lem:Q8yesloop}
Let $\Q$ be a vertex stabilizer adjacent to a looped edge, and   
let $b_{1,1}$ and $b_{1,2}$ be the classes in $\Cohomol^1(\Z/4)$ associated to the looped edge and unlooped edge respectively.  
Then under $d_1$, any class in $\Cohomol^1(\Q)$ is either detected by the cohomology of both edge stabilizers, or neither.  
\end{lemma}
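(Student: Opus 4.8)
The plan is to combine the algebraic input of Lemma \ref{lem:Q8res} with the geometric fact that a looped edge at a $\Q$-vertex corresponds to two of the three conjugacy classes of $\Z/4 \subseteq \Q$ being \emph{identified} in the quotient graph. First I would set up notation parallel to the proof of Lemma \ref{lem:Q8res}: write the three copies of $\Z/4$ in $\Q$ as the central extensions of the three order-$2$ subgroups $Z_1, Z_2, Z_3$ of the Klein four-group $\Kleinfourgroup$, and recall from that lemma that a given nonzero class in $\Cohomol^1(\Q)$ restricts nontrivially to exactly two of these three copies and trivially to the third. Next I would observe that, because two of the three edges incident to the vertex form a loop, the corresponding two copies of $\Z/4$ — say those extending $Z_1$ and $Z_2$ — are conjugate in $\Gamma$ (this is precisely the content of being a looped edge in ${}_\Gamma\backslash X_s$), so in the equivariant spectral sequence the $d_1$ differential records the restriction to $Z_1$ and the restriction to $Z_2$ as the \emph{same} copy of $\Cohomol^1(\Z/4)$, namely $\langle b_{1,1}\rangle$; the third (unlooped) edge contributes $\langle b_{1,2}\rangle$.

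The heart of the argument is then the following case analysis on a nonzero class $u \in \Cohomol^1(\Q)$. By Lemma \ref{lem:Q8res}, $\operatorname{res}^{\Q}_{\Z/4}(u)$ is nonzero on exactly two of the three subgroups. If the two on which $u$ is nonzero are precisely the looped pair $\{Z_1, Z_2\}$, then $u$ is trivial on the unlooped $\Z/4$, so under $d_1$ it is detected by $b_{1,1}$ (coming from the looped edge) but not by $b_{1,2}$ — yet since $Z_1$ and $Z_2$ are identified, ``detected on $Z_1$'' and ``detected on $Z_2$'' are one and the same condition, so $u$ is detected by the looped edge. If instead $u$ is nonzero on one of $Z_1, Z_2$ and on $Z_3$ (the unlooped one), then $u$ is detected by both $b_{1,1}$ and $b_{1,2}$; if $u$ is nonzero on neither looped subgroup — which by Lemma \ref{lem:Q8res} forces $u$ nonzero on $Z_3$ alone, impossible since it must be nonzero on two — this case is vacuous. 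The upshot is that for every nonzero $u$, the pair of values $\bigl(\operatorname{res}(u)\ \text{to looped edge},\ \operatorname{res}(u)\ \text{to unlooped edge}\bigr)$ is either $(b_{1,1}, b_{1,2})$ or $(0,0)$ or $(b_{1,1}, 0)$; I then need to rule out the mixed possibilities $(b_{1,1},0)$ and $(0,b_{1,2})$, which is exactly where the identification $Z_1 \sim Z_2$ does the work: a class nonzero on the looped $\Z/4$ is automatically nonzero on ``both'' incidences of it, and by Lemma \ref{lem:Q8res} a class nonzero on one member of $\{Z_1,Z_2\}$ and trivial on the other member of $\{Z_1,Z_2\}$ cannot occur once those two subgroups are conjugate — so ``detected on the looped edge'' must be accompanied by the appropriate behaviour making the conclusion ``both or neither'' hold.

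The main obstacle I anticipate is making the bookkeeping in that last step airtight: one must be careful that the two order-$2$ subgroups becoming conjugate in $\Gamma$ genuinely corresponds, in the $d_1$ differential of the \emph{reduced} subcomplex, to a single $\Cohomol^1(\Z/4)$ summand rather than two, and that the conjugating element of $\Gamma$ acts trivially on $\Cohomol^1(\Z/4)$ (so that the two restriction maps are literally equal, not merely equal up to an automorphism) — this follows because $\operatorname{Aut}(\Z/4)$ acts trivially on $\Cohomol^1(\Z/4;\ef)\cong\ef$, but it should be stated. Modulo that point, the lemma reduces to the elementary linear-algebra observation, already recorded in Lemma \ref{lem:Q8res}, that no nonzero class in $\Cohomol^1(\Q)$ can be detected on exactly one of two conjugate $\Z/4$'s — equivalently, that the three ``detection pairs'' of the three nonzero classes of $\Cohomol^1(\Q)$ are the three two-element subsets of $\{Z_1,Z_2,Z_3\}$, and collapsing $Z_1$ with $Z_2$ sends each such pair to either the full set $\{$looped, unlooped$\}$ or to the singleton $\{$looped$\}$, never to $\{$unlooped$\}$ alone.
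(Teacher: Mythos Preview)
There is a genuine gap. The key point you miss is that for a \emph{looped} edge the component of $d_1$ landing in that edge's $\Cohomol^1(\Z/4)$ is not a single restriction map but the \emph{difference} (equivalently, mod $2$, the sum) of the two restriction maps corresponding to the two ends of the loop. This is exactly how the paper proceeds: with the looped edge corresponding to the pair $\{Z_1,Z_3\}$ and the unlooped edge to $Z_2$, the looped component of $d_1$ is $\operatorname{res}_{Z_1} - \operatorname{res}_{Z_3}$. Applied to the class $x_1$ (nonzero on $Z_1$ and $Z_3$, zero on $Z_2$), this gives $b_{1,1}-b_{1,1}=0$ on the looped edge and $0$ on the unlooped edge, so $x_1$ is detected on \emph{neither} --- not on ``looped only'' as your case analysis concludes.

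Your argument instead treats ``detected on the looped edge'' as ``restricts nontrivially to at least one of the two identified $\Z/4$'s,'' which leads you to the pattern $(b_{1,1},0)$ for the class supported on the looped pair. You then try to rule this out by invoking conjugacy in $\Gamma$, but that cannot work: the two $\Z/4$'s are conjugate in $\Gamma$, not in $\Q$, and the three restriction maps out of $\Cohomol^1(\Q)$ remain genuinely distinct (that is precisely the content of Lemma~\ref{lem:Q8res}). Your own final sentence --- that collapsing sends one detection pair to the singleton $\{\text{looped}\}$ --- contradicts the lemma you are trying to prove. The fix is to compute $d_1$ on the looped edge as a difference of restrictions; once you do, the class nonzero on both looped subgroups lands at $0$, and the ``both or neither'' dichotomy falls out immediately.
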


\begin{proof}
Let $y$ be the group element of $\Gamma$ associated with the unlooped edge, 
and $x$ and $xy$ be the group elements of $\Gamma$ associated with one side of the looped edge.  
We again use Lemma~\ref{lem:Q8res}.  However, we note that the subgroup $Z_3$ is now identified with $Z_1$, 
so in cohomology $b_{1,1} = b_{1,3}$.  For the unlooped edge stabilizer, associated to $Z_2$, we have  
\[
 \begin{array}{ll}
   res^{\Q}_{\Z/4}(x_1) = 0  & res^{\Q}_{\Z/4}(y_1) = b_{1,2}  \\
 \end{array}
\]
The restriction to the looped edge stabilizer is induced by the difference of restriction maps to $Z_1$ and $Z_3$ in Lemma~\ref{lem:Q8res}.  
Therefore, this restriction map is given by
\[
 \begin{array}{ll}
   res^{\Q}_{\Z/4}(x_1) = b_{1,1} - b_{1,1} = 0  & res^{\Q}_{\Z/4}(y_1) = b_{1,1}  \\
 \end{array}
\]
We determine the restriction on $x_1 + y_1$ by additivity, and the result follows.
\end{proof}
It is usually clear by Proposition~\ref{prop:restriction} which classes are in the kernel of $d_1$ and which are in its image.
However, Lemmas~\ref{lem:Q8noloop} and~\ref{lem:Q8yesloop} show that the situation is more subtle for classes in degree~$1$.   
We introduce a graphical idea which will aid us in determining $\ker d_1$ 
in this case. First, note that the only classes in $E_2^{0,1}$ come from copies of $\Q$ vertex stabilizers.  Therefore,
any class in $E_2^{0,1}$ can be written as $\sigma \in \oplus_S \Cohomol^1(\Q)$, where the finite index set $S$ gives the support of $\sigma$.  By Property~\ref{ss1} in
Subsection~\ref{ssec: specseq}, the map $d_1$ is the difference of restriction maps for vertex stabilizer groups to edge stabilizer groups.  
We make the following observation about unlooped edges $e$ with endpoints $v_0$ and $v_1$: for a class to be in $\ker d_1$, 
a necessary condition (mod 2) is that the restriction maps to the edge stabilizers must both vanish, or both must be non-trivial.  
That is, for unlooped edges,
\begin{equation}\label{eqn:condition}
  res^{\Gamma_{v_1}}_{\Gamma_{e}}  =  res^{\Gamma_{v_0}}_{\Gamma_{e}}.
\end{equation}
We understand the restriction maps from $\Cohomol^1(\Q)$.  When there are no looped edges,  a class in $\Cohomol^1(\Q)$ is detected by the stabilizers of exactly two edges
by Lemma~\ref{lem:Q8noloop}; and when there is a looped edge, by Lemma~\ref{lem:Q8yesloop}, a class in $\Cohomol^1(\Q)$ is either detected by stabilizers on both the looped and unlooped edge, or it is sent to $0$.  

We remark that it is possible to order basis elements in such a way that $d_1$ is described by a block matrix where each block is associated to a single 
connected component $C$.  Therefore, we can analyze $d_1^{0,1}$ one component at a time.  We have our first result.

\begin{lemma} \label{lem:loopclass}
For each looped edge in a non-central $2$-torsion subcomplex quotient $C$ not of type $\circlegraph$, there is a class which is in $\ker d_1^{0,1}$.
\end{lemma}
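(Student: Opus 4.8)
The plan is to work locally around the looped edge and exhibit an explicit class in $\Cohomol^1(\Q)$ whose $d_1^{0,1}$-image vanishes on every incident edge. Let $e$ be a looped edge in $C$, and let $v$ be the vertex at which $e$ forms a loop. By the structural properties recalled in Section~\ref{sec: background} (in particular, looped edges occur at bifurcation vertices), the stabilizer $\Gamma_v$ is isomorphic to $\Q$, and $e$ together with a third, unlooped edge $f$ accounts for the three conjugacy classes of $\Z/4 \subseteq \Q$, in the configuration analyzed in Lemma~\ref{lem:Q8yesloop}. First I would invoke that lemma to fix notation: with $x$, $y$, $xy$ the relevant elements, $Z_3$ is identified with $Z_1$ along the loop, $b_{1,1}$ is the $\Cohomol^1(\Z/4)$-class on the looped edge and $b_{1,2}$ the one on the unlooped edge, and the restriction maps are $res^{\Q}_{\Z/4}(x_1) = 0$ on both edges while $res^{\Q}_{\Z/4}(y_1) = b_{1,1}$ on the looped edge and $= b_{1,2}$ on the unlooped edge.

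Next I would identify the candidate kernel class as $x_1 \in \Cohomol^1(\Q) = \Cohomol^1(\Gamma_v)$, extended by zero on all other vertex stabilizers of $C$ (this is legitimate by Lemma~\ref{splitting over components}, since we may work one component at a time). By the computation above, $res^{\Q}_{\Z/4}(x_1) = 0$ on both the looped edge $e$ and the unlooped edge $f$; since these are the only edges incident to $v$, and $x_1$ is supported only at $v$, the class $x_1$ is killed by every component of $d_1^{0,1}$. Hence $x_1 \in \ker d_1^{0,1}$. The only point requiring care is that $x_1$ be a nonzero class in $E_1^{0,1}$, i.e. that it genuinely survives as a nontrivial element rather than being forced to zero by some identification — but since $x_1$ is a nonzero element of $\Cohomol^1(\Q)$ and $E_1^{0,1} \cong \prod_{\sigma} \Cohomol^1(\Gamma_\sigma)$ is literally a product over vertex representatives, there is no such collapse, and the choice $v$ is a genuine vertex of $C$ because $C$ is not of type $\circlegraph$ and therefore contains a $\Q$-vertex carrying a looped edge.

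The main (mild) obstacle is bookkeeping rather than mathematics: one must make sure the element $x$ is chosen as the generator of $\Kleinfourgroup$ whose central extension is the $\Z/4$ on the \emph{unlooped} side in such a way that the loop identification $Z_3 \sim Z_1$ is compatible with the edge the loop runs along — i.e. that we are genuinely in the setup of Lemma~\ref{lem:Q8yesloop} and not a relabeling of it. Once the roles of $x$, $y$, $xy$ are pinned down as in the proof of that lemma, the vanishing $res^{\Q}_{\Z/4}(x_1) = b_{1,1} - b_{1,1} = 0$ on the looped edge and $res^{\Q}_{\Z/4}(x_1) = 0$ on the unlooped edge is immediate, and the conclusion follows. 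I would close by remarking that this produces \emph{one} kernel class per looped edge, which is exactly what the statement claims; linear independence of these classes across distinct looped edges (needed later) will follow because distinct looped edges sit at distinct $\Q$-vertices, so the corresponding classes have disjoint support in the product $\bigoplus_S \Cohomol^1(\Q)$.
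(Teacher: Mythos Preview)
Your proof is correct and takes exactly the same approach as the paper: the paper's proof is the single sentence ``The class in $\ker d_1^{0,1}$ is $x_1$ from Lemma~\ref{lem:Q8yesloop},'' and you have simply unpacked this in full detail. One small slip to fix in your bookkeeping paragraph: in the conventions of Lemma~\ref{lem:Q8yesloop}, it is $y$ (not $x$) whose associated $\Z/4$ lies on the \emph{unlooped} edge, while $x$ and $xy$ sit on the two ends of the looped edge---your main argument uses the correct restrictions regardless, but the explanatory sentence contradicts the setup you are invoking.
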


\begin{proof}
The class in $\ker d_1^{0,1}$ is $x_1$ from Lemma~\ref{lem:Q8yesloop}.
\end{proof}

Given a cohomology class $\sigma \in \oplus_S \Cohomol^1(\Q)$ from a non-central $2$-torsion subcomplex quotient component $C$ not of type $\circlegraph$,
the \textit{support} of $\sigma$ is a subgraph of $C$ built as follows:  For each copy of $G \cong \Q$ which contributes to $\sigma$,
add to the subgraph its associated vertex, and the incident edges which detect $\sigma |_G$ as given by Lemmas~\ref{lem:Q8noloop} and~\ref{lem:Q8yesloop}.

\begin{lemma}\label{lem:leafless}
If $\sigma \in \oplus_S \Cohomol^1(\Q)$ has support which includes an edge incident to a loop in $C$, then $\sigma \not\in \ker d_1^{0,1}$.
\end{lemma}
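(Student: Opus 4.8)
The plan is to analyze $d_1^{0,1}$ restricted to the support subgraph of $\sigma$ and exploit the edge-matching condition \eqref{eqn:condition}, pushing the obstruction along the subgraph until it reaches the loop. Suppose toward a contradiction that $\sigma \in \ker d_1^{0,1}$ and that the support of $\sigma$ contains an edge $e$ incident to a looped edge $\ell$; let $v$ be the $\Q$-vertex adjacent to $\ell$. First I would set up the local picture at $v$ exactly as in Lemma~\ref{lem:Q8yesloop}: with the notation there, $b_{1,1}$ is the class on the looped edge $\ell$ and $b_{1,2}$ the class on the unlooped edge. The key point from that lemma is the dichotomy: the component $\sigma_v \in \Cohomol^1(\Q)$ of $\sigma$ at $v$ is detected on \emph{both} $\ell$ and the unlooped edge, or on \emph{neither}. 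Since by hypothesis $e$ is in the support of $\sigma$ and $e$ is incident to the loop — so $e$ is the unlooped edge at $v$, or $v$ is an endpoint of $e$ whose other feature is the loop — the class $\sigma_v$ is detected on $e$, hence $\sigma_v$ is also detected on the looped edge $\ell$.

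Next I would examine $d_1^{0,1}$ on the looped edge $\ell$ itself. A looped edge has both of its endpoints equal to $v$, and the stabilizer $\Gamma_\ell \cong \Z/4$ appears once among the $p=1$ factors; the two restriction maps $res^{\Gamma_v}_{\Gamma_\ell}$ coming from the two "sides" of the loop are, as noted in the paragraph preceding Lemma~\ref{lem:loopclass} and in the proof of Lemma~\ref{lem:Q8yesloop}, the restrictions to $Z_1$ and $Z_3$ respectively — which get identified, so the contribution of $\sigma_v$ to the $\ell$-coordinate of $d_1^{0,1}(\sigma)$ is the \emph{difference} $res_{Z_1}(\sigma_v) - res_{Z_3}(\sigma_v)$. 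For the class $x_1$ of Lemma~\ref{lem:loopclass} this difference vanishes (both restrictions equal $b_{1,1}$), which is exactly why $x_1$ survives; but for the only other nonzero classes in $\Cohomol^1(\Q)$ that are detected on $\ell$ — namely $y_1$ and $x_1+y_1$ in that notation — one of the two restrictions to $\{Z_1,Z_3\}$ is $b_{1,1}$ and the other is $0$, so the difference is $b_{1,1} \neq 0$. Therefore if $\sigma_v$ is detected on $\ell$ and $\sigma_v \neq x_1$, the $\ell$-coordinate of $d_1^{0,1}(\sigma)$ is nonzero and $\sigma \notin \ker d_1^{0,1}$, a contradiction. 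The remaining case is $\sigma_v = x_1$, but $x_1$ is \emph{not} detected on the unlooped edge $e$ (in the notation of Lemma~\ref{lem:Q8yesloop}, $res^\Q_{\Z/4}(x_1)=0$ on the $Z_2$-edge), contradicting that $e$ is in the support of $\sigma$.

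I expect the main obstacle to be bookkeeping rather than mathematical depth: one must be careful about which of the three copies of $\Z/4$ in $\Q$ is identified with which edge after passing to the quotient graph — in particular that forming a loop identifies precisely $Z_1$ with $Z_3$ and leaves $Z_2$ as the unlooped edge — and one must correctly track that the loop contributes a \emph{difference} of two restriction maps rather than a single one. Once that local dictionary at the $\Q$-vertex adjacent to the loop is pinned down, the argument is a short case check on the three nonzero classes of $\Cohomol^1(\Q)$ against the two coordinates (looped edge, unlooped edge) of $d_1^{0,1}$. A minor care point is the degenerate possibility that both edges at the loop-adjacent vertex are the same looped edge and $e$ coincides with it, but then "$e$ incident to a loop" forces $e$ itself to be part of the loop, and the same difference-of-restrictions computation applies verbatim.
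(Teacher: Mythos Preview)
Your approach is the same as the paper's in spirit --- split on whether the component $\sigma_v$ at the loop-vertex $v$ restricts nontrivially to the adjacent edges (the ``both'' case) or not (the ``neither'' case), and in the ``both'' case use that nothing can cancel the contribution on the looped edge. Your computation of the difference $res_{Z_1}-res_{Z_3}$ on the looped edge is correct and in fact more explicit than the paper's one-line ``there can be no second restriction map to the cohomology of the looped edge to make $d_1^{0,1}$ vanish.''

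The gap is in the other case. You write that ``$e$ is in the support of $\sigma$ \dots\ the class $\sigma_v$ is detected on $e$,'' and later dispose of $\sigma_v=x_1$ by saying this ``contradict[s] that $e$ is in the support of $\sigma$.'' That implication is false: by the definition of support, the edge $e$ is added whenever \emph{some} adjacent vertex has its $\sigma$-component detected on $e$. So $e$ may lie in the support solely because $\sigma_w\neq 0$ at the \emph{other} endpoint $w$ of $e$, while $\sigma_v\in\{0,x_1\}$ restricts trivially to $e$. In that situation your argument does not apply; instead one finishes, as the paper does, by invoking Equation~\eqref{eqn:condition}: since $res^{\Gamma_v}_{\Gamma_e}(\sigma_v)=0$ but $res^{\Gamma_w}_{\Gamma_e}(\sigma_w)\neq 0$, the $e$-coordinate of $d_1^{0,1}(\sigma)$ is nonzero. (Alternatively, since the only components with loops are of type $\dumbbellgraph$, both endpoints of $e$ are loop-vertices and you could salvage your argument by replacing $v$ with whichever endpoint actually detects $e$; but you need to say this, and it relies on the classification.) Either way, the ``$\sigma_v=x_1$'' and ``$\sigma_v=0$'' possibilities are not contradictions --- they are the missing ``neither'' branch that must be handled via the edge-matching condition at $e$.
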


\begin{proof}
We note that the only reduced non-central $2$-torsion subcomplex quotient components which have $\Z/4$ vertex stabilizers are $\circlegraph$ loops, 
so they do not fall under the hypothesis of this lemma.    Like all edge stabilizers in $C$, the stabilizer of $e$, $Stab(e)$, is isomorphic to $\Z/4$.   
We will focus on a $\Q$ vertex stabilizer on the other side of $e$, denoting this distinguished copy of $\Q$ by $Q$.  

Since $e$ is adjacent to a looped edge, by Lemma~\ref{lem:Q8yesloop}, the restriction map in cohomology from the vertex group adjacent to the looped edge either maps non-trivially to both edge groups or to neither.  If both, then there can be no second restriction map to the cohomology of the looped edge to make $d_1^{0,1}$ vanish.  If neither, then the support arises because $res^Q_{Stab(e)} \neq 0$.  
By Equation~(\ref{eqn:condition}), such a class cannot be in $\ker d_1^{0,1}$.    
\end{proof}

\begin{theorem}\label{thm:d1rank}
Given $C$, a reduced non-central $2$-torsion subcomplex quotient component not of type $\circlegraph$ , the block of $d_1^{0,1}$ supported on $C$ satisfies $\dim_{\F_2} (\ker d_1^{0,1} |_C) = \beta_1(C)$,
where $\beta_1(C)$ is the first Betti number of $C$.
\end{theorem}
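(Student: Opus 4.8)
The plan is to show that $\ker d_1^{0,1}|_C$ has dimension exactly $\beta_1(C)$ by exhibiting a basis indexed by independent cycles in $C$, while simultaneously showing that no class whose support ``hangs off'' a cycle or dead-ends at a leaf can survive. First I would fix notation: by the structure theory recalled in Section~\ref{sec: background}, a component $C$ not of type $\circlegraph$ is a finite graph in which every edge has stabilizer $\Z/4$, every degree-one vertex (leaf) has stabilizer $\Te$, and every vertex of degree three has stabilizer $\Q$ (there are no degree-two vertices after reduction, and no vertices of degree $\geq 4$). Since the only contributions to $E_2^{0,1}$ come from $\Q$-stabilizers, a class $\sigma \in \bigoplus_S \Cohomol^1(\Q)$ is supported on some subset of the degree-three vertices, and by Property~\ref{ss1} the condition $\sigma \in \ker d_1^{0,1}$ is exactly Equation~(\ref{eqn:condition}) on every edge: the two restriction maps to each edge stabilizer must agree mod~2.

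The key combinatorial translation is this: using Lemma~\ref{lem:Q8noloop} and Lemma~\ref{lem:Q8yesloop}, each nonzero local class at a $\Q$-vertex $v$ selects, among the three edges incident to $v$, a distinguished \emph{pair} on which it restricts nontrivially (for an unlooped vertex, one of the three pairs of edges; for a vertex adjacent to a looped edge, either the pair {looped, unlooped} or the empty selection). So the ``support'' of $\sigma$, as defined just before Lemma~\ref{lem:leafless}, is a subgraph of $C$ in which every vertex of the support has degree exactly two (it picks up precisely the two edges detecting $\sigma|_G$), \emph{unless} an edge of the support is a leaf edge or a looped-incident edge, which Lemma~\ref{lem:leafless} rules out for kernel classes. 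Consequently, for $\sigma \in \ker d_1^{0,1}$, the support is a disjoint union of cycles in $C$: every edge of the support is shared by exactly two support-vertices (Equation~(\ref{eqn:condition}) forces the restriction to each edge to be simultaneously nontrivial at both endpoints, so an edge in the support of $\sigma$ cannot have an endpoint outside the support), and every support-vertex has support-degree two. Conversely, to each cycle $\gamma$ in $C$ one assigns the class $\sigma_\gamma$ which at each $\Q$-vertex of $\gamma$ takes the unique local $\Cohomol^1(\Q)$-class detecting precisely the two edges of $\gamma$ through that vertex — this is well-defined by Lemma~\ref{lem:Q8res}, and it lies in $\ker d_1^{0,1}$ because Equation~(\ref{eqn:condition}) holds edge-by-edge (nontrivial on edges of $\gamma$, trivial off it). The map $\gamma \mapsto \sigma_\gamma$ is additive and injective on the cycle space, giving a vector-space isomorphism between the cycle space $Z_1(C;\F_2)$ and $\ker d_1^{0,1}|_C$; since $C$ is connected, $\dim_{\F_2} Z_1(C;\F_2) = \beta_1(C)$, finishing the proof. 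The case $\beta_1(C)=0$ (a tree) is covered: there are no cycles, Lemma~\ref{lem:leafless} kills every nonzero candidate, so the kernel is $0$.

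The main obstacle I expect is verifying the claim that a kernel class's support is genuinely a union of \emph{edge-disjoint} cycles with no other interaction — in particular, confirming that at a $\Q$-vertex lying on two of the independent cycles the two local selections are compatible (i.e. a single element of $\Cohomol^1(\Q)$ can detect all four relevant edges, or the sum of the two cycle-classes still restricts correctly), which requires revisiting the explicit $\Cohomol^1(\Q) \cong \F_2^2$ structure from Lemma~\ref{lem:Q8res} and checking that the three nonzero classes correspond bijectively to the three edge-pairs, so that ``pick the class detecting this pair'' is always a consistent and unique prescription. Once that bookkeeping is pinned down — essentially a linear-algebra check over $\F_2$ that the local-to-global assignment is an isomorphism onto the cycle space — the dimension count is immediate.
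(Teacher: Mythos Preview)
Your approach is genuinely different from the paper's and in some ways more conceptual: the paper simply invokes Corollary~\ref{cor:Kramer} to reduce to the three component types $\edgegraph$, $\thetagraph$, $\dumbbellgraph$ and then does a short direct computation in each case, whereas you aim for a uniform bijection between the cycle space $Z_1(C;\F_2)$ and $\ker d_1^{0,1}|_C$ that would work for any graph with this local structure. That is an attractive strategy, and for components with no looped edges (e.g.\ $\thetagraph$, $\edgegraph$) your argument goes through essentially as written.

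There is, however, a real gap at looped vertices. Your central claim that ``the support of a kernel class is a disjoint union of cycles, with every support-vertex having support-degree two'' fails precisely for the loop-class $x_1$ of Lemma~\ref{lem:loopclass}: by Lemma~\ref{lem:Q8yesloop} this class is detected by \emph{neither} edge under $d_1$ (the map to the loop is the \emph{difference} of the two end-restrictions, which cancels), so its support in the paper's sense is an isolated vertex of degree zero, not a cycle. Your construction of $\sigma_\gamma$ for a loop-cycle $\gamma$ has the mirror problem: you ask for ``the unique local class detecting precisely the two edges of $\gamma$ through that vertex'', but for a loop there is no nonzero class whose $d_1$-image is supported on the loop alone, while \emph{two} distinct classes ($y_1$ and $x_1+y_1$) are detected by the pair \{looped, unlooped\}. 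So the support bookkeeping does not recover $x_1$ from the loop-cycle, and the ``main obstacle'' you flag (a vertex lying on two cycles) is not the actual difficulty --- it is the looped case that breaks the support-equals-cycle correspondence. (A smaller point: Lemma~\ref{lem:leafless} only handles edges incident to a loop, not leaf edges; the leaf case needs the separate, easy observation that a $\Te$-endpoint contributes $0$ to $E_1^{0,1}$.)

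The repair is not hard once you see it: treat looped vertices separately via Lemma~\ref{lem:loopclass} (each loop contributes one independent kernel class, and Lemma~\ref{lem:leafless} guarantees no other kernel class touches the adjacent non-loop edge), then run your cycle argument on the loop-free remainder. Alternatively, since Corollary~\ref{cor:Kramer} already restricts you to three explicit component types, the paper's brief case-by-case check is the path of least resistance and is what the paper actually does.
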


\begin{proof}
Since the reduced non-central $2$-torsion subcomplex quotient components are disjoint, we can arrange classes in the equivariant spectral sequence so that cohomology calculations may be done one component at a time.   
	
For components of type $\thetagraph$, denote the two $\Q$ vertex stabilizers by $Q_1$ and $Q_2$, and the three edges by $e_1, e_2$, and $e_3$.  By Lemma \ref{lem:Q8res}, we can pick bases for $\Cohomol^1(Q_1)$ and $\Cohomol^1(Q_2)$ so that two classes restrict non-trivially to edges $e_1$ and $e_2$,   and the other two classes restrict non-trivially to edges $e_1$ and $e_3$.  Moving to the equivariant spectral sequence calculation for $\thetagraph$, the sum of the first two basis elements map to zero as do the sum of the second two.  Therefore, $\dim_{\F_2} \ker d_1^{0,1} \geq 2$.  As the classes in $\Cohomol^1(Q_1)$ are linearly independent, we conclude that the dimension is exactly two, which is $\beta_1(\thetagraph)$.

 For a component of type $\dumbbellgraph$, we recall that by Lemma ~\ref{lem:loopclass}, each looped edge results in exactly one class in $\ker d_1^{0,1}$, so $\dim_{\F_2} \ker d_1^{0,1} \geq 2$.  As above, denote the two copies of $\Q$ stabilizers by $Q_1$ and $Q_2$.  By Lemma \ref{lem:Q8res}, we can pick bases for $\Cohomol^1(Q_1)$ and $\Cohomol^1(Q_2)$ so that one class in each of $\Cohomol^1(Q_1)$ and $\Cohomol^1(Q_2)$ restricts non-trivially to a loop and to the edge connecting the vertices.  These classes are linearly independent, which implies $\dim_{\F_2} \ker d_1^{0,1} = \beta_1(\thetagraph) = 2$.
 
Finally, for a component of type $\edgegraph$ the result follows by Proposition \ref{prop:SLfincoh}, since $\Cohomol^1(\Te) = 0$.  
\end{proof}

In our non-central reduced $2$--torsion subcomplexes, 
edge stabilizers are always of type $\Z/4$; 
and vertex stabilizers are of one of the types 
\begin{itemize}
 \item $\Te$, with one edge adjacent to the vertex in the quotient space, or
 \item $\Z/4$, with one edge adjacent at both of its ends, yielding a connected component $\circlegraph$, or  
 \item $\Q$, with three edges (counted once or twice) adjacent to the vertex in the quotient space.
\end{itemize} 
Corollary~\ref{cor:Kramer} 
tells us that the only possible types of connected components in our quotients of reduced non-central $2$--torsion subcomplexes are
$\circlegraph$, $\edgegraph$, $\thetagraph$ and $\dumbbellgraph$.  
In the following theorem, we treat the components of types  $\edgegraph$, $\thetagraph$ and $\dumbbellgraph$ in a unified way,
making use of the fact that the only types of vertex stabilizers on these three connected component types are $\Q$ and $\Te$.

\begin{theorem}\label{thm:generalcohom}
Let $\calC$ be the collection of connected components not of type $\circlegraph$ 
in some reduced $2$-torsion subcomplex,
where $m$ vertices have $\Q$ stabilizers and $n$ vertices have $\Te$ stabilizers.
Then the $E_2$ page of the equivariant spectral sequence restricts on $\calC$ to
the following dimensions over $\ef$:
\begin{center}
$ \begin{array}{l | l l  l}
q \equiv 3 \mod 4 & {m+n}  &  {3\frac{m}{2}+\frac{n}{2}}  \\
q \equiv 2 \mod 4 & {2m}   &  {3\frac{m}{2}+\frac{n}{2}}  \\
q \equiv 1 \mod 4 & {m}    &  {\frac{m}{2}+\frac{n}{2}}  \\
q \equiv 0 \mod 4 & {\frac{m}{2}+\frac{n}{2}}        &  {m}  \\
\hline & p = 0 & p =  1
\end{array}
$ 
\end{center}
\end{theorem}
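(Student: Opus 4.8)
The plan is to compute the $E_1$ page of the equivariant spectral sequence restricted to $\calC$ directly from the stabilizer cohomology of Proposition~\ref{prop:SLfincoh}, to identify the $d_1$ differential in each residue class of $q$ modulo $4$ from the restriction maps of Proposition~\ref{prop:restriction} and Lemmas~\ref{lem:Q8res}--\ref{lem:Q8yesloop}, and then to read off $E_2^{0,q}=\ker d_1^{0,q}$ and $E_2^{1,q}=\operatorname{coker} d_1^{0,q}$ (there is no column $p=2$, as $X_s$ has no $2$-cells). By Lemma~\ref{splitting over components} this computation splits over connected components, so I would treat one component of each shape $\edgegraph$, $\thetagraph$, $\dumbbellgraph$ permitted by Corollary~\ref{cor:Kramer} and then add up. A component of type $\edgegraph$ has two $\Te$-vertices and one $\Z/4$-edge, whereas $\thetagraph$ and $\dumbbellgraph$ each have two $\Q$-vertices and three $\Z/4$-edges; summing, $\calC$ has $m+n$ vertices and $(3m+n)/2$ edges (the edge count also follows from the degree count, as $\Q$-vertices have degree $3$ and $\Te$-vertices degree $1$). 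From Proposition~\ref{prop:SLfincoh}, $\dim_\ef\Cohomol^q(\Z/4)=1$ for every $q$, while $\dim_\ef\Cohomol^q(\Q)$ and $\dim_\ef\Cohomol^q(\Te)$ run through $1,2,2,1$ and $1,0,0,1$ as $q\equiv 0,1,2,3\bmod 4$; hence $\dim_\ef E_1^{0,q}$ on $\calC$ equals $m+n$, $2m$, $2m$, $m+n$ in these four cases, and $\dim_\ef E_1^{1,q}=(3m+n)/2$ always.

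The decisive step I would establish first is that $d_1^{0,q}=0$ whenever $q\equiv 2$ or $3\bmod 4$. Every class in $\Cohomol^{4k+2}(\Q)$ is $e_4^k$ times a degree-$2$ monomial in $x_1,y_1$, and every class in $\Cohomol^{4k+3}(\Q)$ is $e_4^k$ times a degree-$3$ monomial; since $\Cohomol^*(\Z/4)=\ef[e_2](b_1)$ has $b_1^2=0$, Lemma~\ref{lem:Q8res} (in both its unlooped and looped forms, Lemmas~\ref{lem:Q8noloop} and~\ref{lem:Q8yesloop}) shows that $res^{\Q}_{\Z/4}$ sends every such monomial to zero, hence annihilates all of $\Cohomol^{4k+2}(\Q)$ and $\Cohomol^{4k+3}(\Q)$. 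On the $\Te$-side, $\Cohomol^{4k+2}(\Te)=0$, while $res^{\Te}_{\Z/4}(e_4^k b_3)=e_2^{2k}\,res^{\Te}_{\Z/4}(b_3)$ and $res^{\Te}_{\Z/4}(b_3)=0$, because it factors as $res^{\Q}_{\Z/4}\circ res^{\Te}_{\Q}$ through $\Cohomol^3(\Q)$, which $res^{\Q}_{\Z/4}$ has just been seen to annihilate. Therefore $E_2^{p,q}=E_1^{p,q}$ for $p\in\{0,1\}$ when $q\equiv 2,3\bmod 4$, yielding the top two rows of the table.

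For $q\equiv 0\bmod 4$, all stabilizer groups in play are one-dimensional and $res^{\Q}_{\Z/4}(e_4^k)=res^{\Te}_{\Z/4}(e_4^k)=e_2^{2k}$ is an isomorphism $\ef\to\ef$, independent of the chosen copy of $\Z/4$; thus $d_1^{0,q}$ is exactly the simplicial coboundary $C^0(\calC;\ef)\to C^1(\calC;\ef)$ of the graph $\calC$, with a loop at a vertex contributing zero (as the difference of two equal restrictions, in agreement with Lemma~\ref{lem:Q8yesloop}). Hence $E_2^{0,q}\cong\Cohomol^0(\calC;\ef)$ has dimension $b_0(\calC)=\tfrac{m}{2}+\tfrac{n}{2}$, since every component has exactly two vertices (two of type $\Te$ for $\edgegraph$, two of type $\Q$ for $\thetagraph$ and $\dumbbellgraph$), and $E_2^{1,q}\cong\Cohomol^1(\calC;\ef)$ has dimension $b_1(\calC)=|E(\calC)|-|V(\calC)|+b_0(\calC)=(3m+n)/2-(m+n)+\tfrac{m}{2}+\tfrac{n}{2}=m$.

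Finally, for $q\equiv 1\bmod 4$, which by periodicity is the case $q=1$, I would invoke Theorem~\ref{thm:d1rank}: on each non-$\circlegraph$ component $C$ one has $\dim_\ef\ker d_1^{0,1}|_C=\beta_1(C)$, and since $\thetagraph$ and $\dumbbellgraph$ each have $\beta_1=2$ and two $\Q$-vertices while $\edgegraph$ has $\beta_1=0$ and none, summing gives $\dim_\ef E_2^{0,1}=\dim_\ef\ker d_1^{0,1}=m$; consequently $\operatorname{rank} d_1^{0,1}=\dim_\ef E_1^{0,1}-m=m$, so $\dim_\ef E_2^{1,1}=\dim_\ef E_1^{1,1}-m=(3m+n)/2-m=\tfrac{m}{2}+\tfrac{n}{2}$. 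I expect the main obstacle to be the vanishing of $res^{\Q}_{\Z/4}$ and $res^{\Te}_{\Z/4}$ in degrees $\equiv 2,3\bmod 4$: this rests on $b_1^2=0$ in $\Cohomol^*(\Z/4)$ and on carrying the degree-$1$ data of Lemma~\ref{lem:Q8res} correctly through cup products and through the inclusions $\Z/4\subset\Q\subset\Te$, while keeping the half-edge bookkeeping at loops consistent with Lemmas~\ref{lem:Q8yesloop} and~\ref{lem:loopclass}.
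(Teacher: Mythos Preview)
Your proof is correct and follows essentially the same route as the paper: compute the $E_1$ page from the stabilizer cohomology, identify $d_1^{0,q}$ with the simplicial coboundary of the underlying graph when $q\equiv 0\bmod 4$, invoke Theorem~\ref{thm:d1rank} when $q\equiv 1\bmod 4$, and show $d_1^{0,q}=0$ when $q\equiv 2,3\bmod 4$. Your treatment of the last case is in fact more explicit than the paper's, which simply cites Proposition~\ref{prop:restriction}; your argument via $b_1^2=0$ in $\Cohomol^*(\Z/4)$ and the factorization of $res^{\Te}_{\Z/4}$ through $\Cohomol^3(\Q)$ fills in exactly the details that make that citation work.
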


\begin{proof}
By Kr\"amer's results (see Corollary~\ref{cor:Kramer}), the collection
$\calC$ contains $3\frac{m}{2}+\frac{n}{2}$ edges, all of which have $\Z/4$ stabilizer.  
The $E_1$ page of the equivariant spectral sequence has the following dimensions over $\ef$.
$$
\begin{array}{l | l l l l}
q \equiv 3 \mod 4 & {m+n} &  \longrightarrow & {3\frac{m}{2}+\frac{n}{2}}  \\
q \equiv 2 \mod 4 & {2m}  &  \longrightarrow & {3\frac{m}{2}+\frac{n}{2}}  \\
q \equiv 1 \mod 4 & {2m}  &  \longrightarrow & {3\frac{m}{2}+\frac{n}{2}}  \\
q \equiv 0 \mod 4 & {m+n} &  \longrightarrow & {3\frac{m}{2}+\frac{n}{2}}  \\
\hline & p = 0 & & p = 1
\end{array}
$$
To get to the $E_2$ page, we need to determine the $d_1$ differential.  
In the bottom row, the $E_2^{p,0}$ term of $\calC$ is isomorphic to the simplicial homology $\Homol_p(\calG)$ of the graph $\calG$ underlying $\calC$.
The dimension of $\Homol_0(\calG)$ is the number of connected components of $\calG$, namely $\frac{m}{2}+\frac{n}{2}$.
And the dimension of $\Homol_1(\calG)$ is the number of loops of $\calG$; 
using Kr\"amer's results (see Corollary~\ref{cor:Kramer}) we see that there are as many loops in the collection $\calC$ as bifurcation points, namely $m$.

In dimensions $q \equiv 1 \bmod (4)$,  
Theorem~\ref{thm:d1rank} implies that $\dim \ker d_1^{0,q} = m$.  
And from Proposition~\ref{prop:restriction}, we know that  
$d_1^{0,q}$ vanishes in dimensions $q \equiv 2, 3 \bmod (4)$.  
\end{proof}

For the excluded case, we recall a lemma that has already been established.

\begin{lemma}[\cite{BerkoveRahm}*{Lemma 26}] \label{circle-contribution}
 Let $C$ be a connected component of type $\circlegraph$.  Then 
 
 $ \dim_{\ef}\Cohomol^q(C) = $\scriptsize$
  \begin{cases}
       1,             &    q = 0;\\
       2,     &    q  \geq 1
  \end{cases}
$ \normalsize
and $E_2^{p,q}|_C \cong \ef$ for all $q \geq 0$, $p \in \{0, 1\}$.
\end{lemma}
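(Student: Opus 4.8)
The plan is to compute the block of the equivariant spectral sequence supported on $C$ (a legitimate sub-object by Lemma~\ref{splitting over components}) and to observe that it collapses at once. As recalled in Section~\ref{sec: background}, a connected component of type $\circlegraph$ is a circle all of whose vertex and edge stabilizers are of type $\Z/4$. Combining the $E_1$-formula of Subsection~\ref{ssec: specseq} with Proposition~\ref{prop:SLfincoh} --- which gives $\dim_{\ef}\Homol^q(\Z/4)=1$ for every $q\ge 0$ --- the block of the $E_1$-page on $C$ is then $E_1^{0,q}|_C\cong\ef$, $E_1^{1,q}|_C\cong\ef$ for all $q\ge 0$, and $E_1^{p,q}|_C=0$ for $p\ge 2$ because $C$ is one-dimensional.

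Next I would pin down the differential $d_1^{0,q}|_C$. By Property~\ref{ss1} it is, over $\ef$, the sum of the restriction maps $\Homol^q(\Gamma_v)\to\Homol^q(\Gamma_e)$ from the stabilizers of the two endpoints of each edge $e$. Since $\Gamma_e$ and each $\Gamma_v$ are isomorphic to $\Z/4$ and the structure maps $\Gamma_e\to\Gamma_v$ are injective, they are isomorphisms, hence each such restriction is an isomorphism $\ef\to\ef$, i.e.\ the identity. Therefore, in each fixed degree $q$, $d_1^{0,q}|_C$ is exactly the simplicial coboundary of the circle $C$ with $\ef$-coefficients, so $\ker d_1^{0,q}|_C\cong\Homol^0(C;\ef)\cong\ef$ and $\cok d_1^{0,q}|_C\cong\Homol^1(C;\ef)\cong\ef$. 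This gives $E_2^{p,q}|_C\cong\ef$ for $p\in\{0,1\}$ and all $q\ge 0$, and $E_2^{p,q}|_C=0$ otherwise, which is the second assertion.

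Finally, the spectral sequence degenerates at $E_2$ on this block, since every $d_r$ with $r\ge 2$ out of column $0$ lands in a column $\ge 2$ where the page vanishes, and no differential can enter columns $0$ or $1$. Hence $\Cohomol^q(C)$ admits a two-step filtration with associated graded $E_2^{0,q}|_C\oplus E_2^{1,q-1}|_C$: for $q=0$ only $E_2^{0,0}|_C\cong\ef$ contributes, so $\dim_{\ef}\Cohomol^0(C)=1$, while for $q\ge 1$ both summands are copies of $\ef$, so $\dim_{\ef}\Cohomol^q(C)=2$.

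The one step that deserves care is the identification of $d_1^{0,q}|_C$ with the coboundary of the circle --- concretely, that a reduced $\circlegraph$-component really is a cycle along which every edge-to-vertex restriction is an isomorphism, rather than containing some restriction that fails to be one. This is guaranteed by the structure of the reduced non-central $2$-torsion subcomplex recalled in Section~\ref{sec: background}: all of its edge stabilizers are of type $\Z/4$, and the $\circlegraph$-components are precisely the components whose vertex stabilizers are all of type $\Z/4$, so every edge stabilizer sits isomorphically in the stabilizers at both of its endpoints. Granting that, the remainder is routine two-column bookkeeping; this reproduces the argument of \cite{BerkoveRahm}*{Lemma 26}.
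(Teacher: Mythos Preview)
Your argument is correct and follows essentially the same route as the paper's sketch: compute the $E_1$-block on $C$, show $d_1|_C=0$, and read off $E_2=E_\infty$. The one small difference is in how $d_1=0$ is justified. The paper invokes the HNN-extension description of $\circlegraph$ and notes that the twisting sends a generator of $\Z/4$ to another generator, so the two edge-inclusions induce the same map on mod~$2$ cohomology. You instead use the rank-$1$ trick: since $\dim_{\ef}\Cohomol^q(\Z/4)=1$, each restriction is an isomorphism $\ef\to\ef$ and hence the identity, so the two restrictions agree automatically. Your version is arguably cleaner, as it sidesteps checking that the possible inversion automorphism $x\mapsto x^{-1}$ acts trivially on $\Cohomol^*(\Z/4;\ef)$; the paper's phrasing implicitly relies on that fact.
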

The proof of Lemma \ref{circle-contribution} is straightforward.  Briefly, at the level of graph of groups a connected component of type $\circlegraph$ is an HNN extension where the twisting sends a generator of the vertex stabilizer to another generator.  Property~\ref{ss1} in Section~\ref{ssec: specseq} then implies that $d_1$ is the zero map.

\section{Determination of the $d_2$ differential} \label{sec: d2 vanishes}
In order to complete our cohomology calculations, we need to understand the $d_2$ differential in the equivariant spectral sequence.
In general this can be quite difficult, but in our situation the existence of Steenrod operations in the spectral sequence help out 
a great deal.  We mirror the approach laid out in~\cite{BerkoveRahm}, quoting results as needed.
We treat first the degenerate case where the non-central $2$-torsion subcomplex $X_s$ is empty,
so we can afterwards assume that it is non-empty.
\begin{proposition} \label{empty case}
Let the non-central $2$-torsion subcomplex $X_s$ be empty.
Then $$\dim_{\ef} \Cohomol^q(\Gamma;\thinspace \ef) = 
\begin{cases}
 \beta^1 +1, & q = 1,\\
 \beta^2 +\beta^1 +1, & q \geq 2,
\end{cases}$$
 where $\beta^q = \dim_{\F_2}\Cohomol^q(_\Gamma \backslash X ; \thinspace \F_2)$.
\end{proposition}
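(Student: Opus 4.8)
The plan is to run the equivariant spectral sequence in the degenerate situation where $X_s = \emptyset$. When the non-central $2$-torsion subcomplex is empty, no cell stabilizer contains a non-central element of order $2$; by the analysis in Section~\ref{sec: background} every cell stabilizer of the $\Gamma$-action on $X$ then has trivial image in $\text{P}\Gamma$, so with $\F_2$-coefficients the cohomology of each stabilizer $\Gamma_\sigma$ reduces to $\F_2$ in every degree (one should note $H^j(\Z/2;\F_2)\cong\F_2$, and the virtually-free-of-$2$-torsion subgroups contribute only $\F_2$ in degree $0$; the point is that the relevant $E_1^{i,j}$ is just $C^i(_\Gamma\backslash X;\F_2)$, independent of $j$). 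Concretely, this is exactly the case recorded in Theorem~\ref{thm:E2page}: when $X_s$ is empty, $E_2^{p,q}\cong \Cohomol^p(_\Gamma\backslash X;\F_2)$ for all $p,q\geq 0$, and the $E_2$ page is concentrated in columns $p\in\{0,1,2\}$ since $X$ is a $2$-dimensional retract.

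Next I would argue that the spectral sequence degenerates at $E_2$. The differential $d_2$ has bidegree $(2,-1)$, so the only potentially nonzero $d_2$ on our three-column page is $d_2^{0,q}\colon E_2^{0,q}\to E_2^{2,q-1}$, i.e.\ a map $\Cohomol^0(_\Gamma\backslash X;\F_2)\to\Cohomol^2(_\Gamma\backslash X;\F_2)$. Since $X$ is contractible and $\Gamma$ acts with connected quotient, $\Cohomol^0(_\Gamma\backslash X;\F_2)\cong\F_2$ is generated by the class $1$, which is a permanent cocycle (it is the identity for the ring structure on the spectral sequence, Property~\ref{ss2}); hence $d_2^{0,q}(1)=0$ and $d_2$ vanishes identically. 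All higher differentials $d_r$, $r\geq 3$, also vanish for bidegree reasons on a three-column page. Therefore $E_2=E_\infty$.

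It then remains to assemble $\Cohomol^q(\Gamma;\F_2)$ from the $E_\infty$ page. For a fixed total degree $q+ \text{(row)} = N$, the contributions come from $E_\infty^{0,N}\cong\Cohomol^0(_\Gamma\backslash X)\cong\F_2$, $E_\infty^{1,N-1}\cong\Cohomol^1(_\Gamma\backslash X)$ of dimension $\beta^1$, and $E_\infty^{2,N-2}\cong\Cohomol^2(_\Gamma\backslash X)$ of dimension $\beta^2$ — provided the corresponding row indices $N$, $N-1$, $N-2$ are all nonnegative. Since all groups involved are $\F_2$-vector spaces the (automatically split) filtration gives $\dim_{\F_2}\Cohomol^N(\Gamma;\F_2)$ as the sum of these dimensions. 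For $N=1$ only the $p=0$ and $p=1$ entries are available (the $p=2$ entry would sit in row $-1$), giving $1+\beta^1$; for $N\geq 2$ all three entries contribute, giving $1+\beta^1+\beta^2$. This is exactly the claimed formula.

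The only genuine subtlety — and the step I would be most careful about — is the reduction of the $E_1$ page to simplicial cochains of $_\Gamma\backslash X$ in the empty case, i.e.\ verifying that with $\F_2$-coefficients every cell stabilizer really does contribute $\F_2$ in the relevant degrees and that the $d_1$ differential is precisely the simplicial coboundary, so that $E_2^{p,q}\cong\Cohomol^p(_\Gamma\backslash X;\F_2)$. This is asserted in Theorem~\ref{thm:E2page}, whose proof in~\cite{BerkoveRahm} one simply invokes; with that in hand the remainder is the routine bookkeeping sketched above.
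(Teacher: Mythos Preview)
Your identification of the $E_2$ page via Theorem~\ref{thm:E2page} and the final bookkeeping are fine, but the argument for the vanishing of $d_2$ has a genuine gap. You write that $E_2^{0,q}\cong \Cohomol^0(_\Gamma\backslash X;\F_2)\cong\F_2$ is ``generated by the class $1$'', and then invoke that the multiplicative identity is a permanent cocycle. But the multiplicative identity of the spectral sequence lives in $E_2^{0,0}$; the isomorphism $E_2^{0,q}\cong\Cohomol^0(_\Gamma\backslash X;\F_2)$ for $q>0$ is only an isomorphism of one-dimensional $\F_2$-vector spaces, not an identification of the generator with~$1$. Concretely, the generator of $E_2^{0,q}$ is the diagonal class $z_1^q$, where $z_1\in E_2^{0,1}$ restricts to the polynomial generator $e_1$ in each vertex stabilizer $\Cohomol^*(\Z/2;\F_2)$. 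Nothing you have said rules out $d_2(z_1)\neq 0$ in $E_2^{2,0}\cong\Cohomol^2(_\Gamma\backslash X;\F_2)$, and this is exactly the substantive point (it is the mod~$2$ $k$-invariant of the central extension $1\to\Z/2\to\Gamma\to\text{P}\Gamma\to 1$).

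The paper handles this differently: it passes to the homological equivariant spectral sequence with \emph{integral} coefficients, where (modulo $3$-torsion and apart from row~$0$) the $E^2$ page is concentrated in the odd rows, because $\Homol_q(\Z/2;\Z)$ is $\Z/2$ for $q$ odd and $0$ for $q>0$ even. The $d^2$ differential, having bidegree $(-2,1)$, therefore vanishes in rows above~$1$ for parity reasons. A Universal Coefficient comparison then forces the cohomological mod~$2$ differential to vanish in high rows, and the periodicity of the page pushes this down to low degrees. This parity argument is what your proof is missing; the multiplicative-identity shortcut does not substitute for it.
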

\begin{proof}
 In view of Theorem~\ref{thm:E2page}, we only have to show that the $d_2$ differential vanishes completely.
 For this purpose, we consider the homological equivariant spectral sequence with integer coefficients.
 That sequence is, modulo $3$-torsion and apart from the zeroth row, concentrated in the odd rows,
 with $(p,q)$th entry $(\Z/2)^{\beta^p}$.
 Therefore its $d_2$ differential is zero modulo $3$-torsion in rows above $q = 1$.
 Comparing via the Universal Coefficient theorem, we see that the $d_2$ differential in rows above $q = 1$
 must vanish also for the cohomological equivariant spectral sequence with mod $2$ coefficients.
 By the periodicity, we obtain that the latter differential vanishes also in low degrees.
\end{proof}

Recall from Section~\ref{Section:E2 page} the definition of $c$ as  the rank of the cokernel of
\[
\Cohomol^{1} (_\Gamma \backslash X; \thinspace \F_2) \rightarrow \Cohomol^{1} (_\Gamma \backslash X_s; \thinspace \F_2) 
\]
induced by the inclusion $X_s \subset X$.  

\begin{lemma}\label{lem:d2evenvanish}
When $c= 0$, the $d_2$ differential vanishes on $E_2^{0,4q+2}$.
\end{lemma}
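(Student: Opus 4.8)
The plan is to exploit the Steenrod operations on the equivariant spectral sequence (properties (4) and (5) in Subsection~\ref{ssec: specseq}) to reduce the vanishing of $d_2$ on $E_2^{0,4q+2}$ to the vanishing of $d_2$ on $E_2^{0,2}$, and then to handle the base case $E_2^{0,2}$ directly. First I would observe that, by Theorem~\ref{thm:E2page} and Lemma~\ref{splitting over components}, the group $E_2^{0,4q+2}$ splits over connected components of $\Gamma\backslash X_s$, and by Theorem~\ref{thm:generalcohom} and Lemma~\ref{circle-contribution} its nonzero contributions come from $\Cohomol^2(\Q)$-classes on non-$\circlegraph$ components (each $\Q$-vertex contributing a $2$-dimensional piece sitting inside $\Cohomol^2(\Q)\cong\ef[e_4](x_1,y_1)/\langle R\rangle$ in degree~$2$, namely the span of $x_1^2$ and $x_1y_1$) together with the one-dimensional $\ef^{1-\sign(v)}$ summand and the $\Cohomol^2_\Gamma(\xsp)$ summand. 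The periodicity is realized multiplicatively by the polynomial class $e_4$ coming from $\Cohomol^4(\Q)$ (resp.\ $\Cohomol^4(\Te)$), which survives to $E_\infty$; so a degree-$(4q+2)$ class is, up to such a survivor, the image of a degree-$2$ class under multiplication by $e_4^q$. Since $d_2$ is a derivation for the product on the spectral sequence (property (2)) and kills the $e_4$-type classes (they are permanent cycles), $d_2$ on $E_2^{0,4q+2}$ is determined by $d_2$ on $E_2^{0,2}$; hence it suffices to treat $q=0$.

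For the base case I would use Property~(3): the products on the vertical edge $E_2^{0,*}$ are the restriction of the cup product to the classes in $\ker d_1$. Every class in $E_2^{0,2}$ is, on each non-$\circlegraph$ component, a sum of squares and products of degree-$1$ classes in $\Cohomol^1(\Q)$ that survive to $E_2^{0,1}$, i.e.\ lie in $\ker d_1^{0,1}$; by Theorem~\ref{thm:d1rank} these are exactly the $\beta_1(C)$-dimensional spaces described in Lemmas~\ref{lem:Q8noloop}, \ref{lem:Q8yesloop}, \ref{lem:loopclass}. Now apply the Steenrod square $Sq^1$: for a degree-$1$ class $u$ on the vertical edge, $Sq^1u=u^2$, so the degree-$2$ part of $E_2^{0,2}$ coming from squares is $Sq^1$ of the degree-$1$ edge. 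By property (5), when $k\le q-1$ we have $d_2 Sq^k=Sq^k d_2$; taking $k=1$ and $q=1$ this gives $d_2 Sq^1 u = Sq^1 d_2 u$ for $u\in E_2^{0,1}$. But $d_2 u\in E_2^{2,0}\cong\Cohomol^2(_\Gamma\backslash X)$, a space on which the only Steenrod operation available in this range is the identity/trivial one, and more to the point $d_2 u=0$ for $u\in E_2^{0,1}$ would follow from... — here is precisely where the hypothesis $c=0$ enters.

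The role of $c=0$ is to force $d_2^{0,1}=0$. Indeed $c$ measures the cokernel of $\Cohomol^1(_\Gamma\backslash X)\to\Cohomol^1(_\Gamma\backslash X_s)$; when $c=0$ every degree-$1$ class on the $2$-torsion subcomplex already comes from the quotient space $_\Gamma\backslash X$, i.e.\ from the $p$-axis $E_2^{*,0}$, which consists of permanent cycles. Concretely, comparing with the map of equivariant spectral sequences induced by the inclusion $X_s\subset X$ (or equivalently by the identification of $E_2^{*,0}$ with $\Cohomol^*(_\Gamma\backslash X)$), one sees that the surviving classes in $E_2^{0,1}$ are in the image of $E_2^{0,0}\to E_2^{0,1}$ up to the $a_1$-part that is accounted for by the $a_3 = \beta^1+v-\sign(v)$ term on the $E_2^{1,1}$ entry; when $c=0$, Theorem~\ref{thm:E2page} shows $a_1=\chi(_\Gamma\backslash X_s)-1+\beta^1$ and the loop-classes of Lemma~\ref{lem:loopclass} pair with classes already present in $\Cohomol^1(_\Gamma\backslash X)$, so $d_2$ must vanish on them for degree/naturality reasons. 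Feeding $d_2 u=0$ for $u\in E_2^{0,1}$ into $d_2 Sq^1 u=Sq^1 d_2 u=Sq^1 0=0$ kills $d_2$ on the square part of $E_2^{0,2}$; the remaining generator of $E_2^{0,2}$ on each component is $x_1y_1$, which equals $Sq^1$ applied to a suitable degree-$1$ class plus a square (using the relation $x_1^2+x_1y_1+y_1^2=0$ in $\Cohomol^*(\Q)$), so it too is hit by the same argument. Finally the $\ef^{1-\sign(v)}$ and $\Cohomol^2_\Gamma(\xsp)$ summands of $E_2^{0,2}$ are permanent cycles by the structural description in Theorem~\ref{thm:E2page} (they are detected on $\xsp$, which carries no higher differential out of column $0$ into column $2$ that survives). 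Combining, $d_2$ vanishes on all of $E_2^{0,2}$, and by the periodicity argument of the first paragraph on all of $E_2^{0,4q+2}$.

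The main obstacle I expect is the precise bookkeeping in the base case: pinning down that $d_2 u=0$ for \emph{all} surviving $u\in E_2^{0,1}$ when $c=0$, rather than just for those coming visibly from $\Cohomol^1(_\Gamma\backslash X)$ — the loop-classes of Lemma~\ref{lem:loopclass} are the delicate ones, and one must verify via the naturality of the spectral sequence under $X_s\hookrightarrow X$ (and Lemma~\ref{lem:Q8yesloop}) that the $c=0$ hypothesis is exactly what identifies each such loop-class with a class pulled back from the quotient space, hence a permanent cycle. Once that identification is in hand, the Steenrod-operation machinery (properties (4)--(5)) does the rest mechanically.
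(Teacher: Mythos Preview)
Your approach is genuinely different from the paper's, and it has a fatal gap. The paper argues on the \emph{target} side: it observes that $Sq^1$ vanishes on $E_2^{0,4q+2}$ (because $Sq^1$ is zero on $\Cohomol^2(G)$ for every vertex stabilizer $G\in\{\Z/4,\Q,\Te\}$), and that when $c=0$ the map $Sq^1\colon E_2^{2,4q+1}\to E_2^{2,4q+2}$ is injective (both sides have dimension $\beta^2$, and the classes live in $\oplus\Cohomol^*(\Z/2)$ where $Sq^1$ sends $z_1^{2k+1}$ to $z_1^{2k+2}$). Then $Sq^1 d_2 = d_2 Sq^1 = 0$ forces $d_2=0$. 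The hypothesis $c=0$ is used only to make $Sq^1$ injective on the target, via the dimension count $a_2=\beta^2+c$.

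Your route instead tries to write $E_2^{0,2}$ as $Sq^1$ of degree-$1$ classes and then argue $d_2^{0,1}=0$. The central claim, that $c=0$ implies $d_2^{0,1}=0$, is \emph{false}: the appendix contains an explicit counterexample, $\Gamma_0(\langle 2,\sqrt{-14}\,\rangle)\subset\SLtwo(\Z[\sqrt{-14}\,])$, with $c=0$ and $r^{0,1}=1$. Your heuristic that ``loop-classes pair with classes already present in $\Cohomol^1(_\Gamma\backslash X)$'' confuses the direction of the map in the definition of $c$; the condition $c=0$ says $\Cohomol^1(_\Gamma\backslash X)\to\Cohomol^1(_\Gamma\backslash X_s)$ is surjective, which does not make the $E_2^{0,1}$-classes into pullbacks of permanent cycles from the $p$-axis. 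There is also a secondary problem: property~(5) requires $k\le q-1$, so for $u\in E_2^{0,1}$ and $k=1$ the relation $d_2 Sq^1 u = Sq^1 d_2 u$ is not available as stated (one would need the Kudo transgression formula instead). Finally, your periodicity reduction via multiplication by $e_4$ presupposes that $e_4$ is a permanent cycle, which is the content of Lemma~\ref{lem:d24qVanish}, proved \emph{after} the present lemma in the paper --- so you cannot invoke it here without circularity.
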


\begin{proof}
This is essentially Lemma 31 in~\cite{BerkoveRahm}, although a necessary hypothesis that $c = 0$ is missing there.  We sketch the argument, indicating the need for $c=0$ right after the proof.  

We note that $Sq^1$ vanishes on $\Cohomol^2(G)$ for all finite groups $G$ which appear as vertex stabilizers.  In the equivariant spectral sequence, a non-zero target of the $d_2$ differential is an odd-dimensional class in the cokernel of $d_1$.  Since all $2$-cells have $\Z/2$ stabilizer, the cohomology groups in $E_1^{2,q}$ are $\ef$-vector spaces.  Therefore, we may consider classes in $E_2^{2,q}$ to be equivalence classes of the form $\oplus \Cohomol^*(\Z/2)$, where the sum is over appropriate $2$-cells in $_\Gamma \backslash X$. (Note: in what follows, we will simply refer to sums on the $E_2$ page, although we mean equivalence classes.) Looking back at the derivation of Theorem~\ref{thm:E2page}, classes of this form have non-trivial $Sq^1$ when $c = 0$ since all classes in the second column arise from cohomology of stabilizers of $2$-cells in $_\Gamma \backslash X$ and the $\F_2$-dimensions of $E_2^{2,q}$  are equal for all $q$.  However, by Property~\ref{ss4} in Section~\ref{ssec: specseq}, $Sq^1 d_2 = d_2 Sq^1$, so this is impossible.  
\end{proof}
On the other hand, when $c > 0$, some classes in $E_2^{2,1}$ will have trivial $Sq^1$ simply for dimensional reasons. These classes can be the target of a non-trivial $d_2$ differential.

\begin{rem}
The $d_2^{0,2q}$ differential may vanish also in cases where $c \neq 0$.
In the latter cases, we rely on the machine computations in order to find 
the rank of $d_2^{0,2q}$.
\end{rem}

In addition, we have 

\begin{lemma}\label{lem:d24qVanish}
The $d_2$ differential vanishes on $E_2^{0,4q}$.
\end{lemma}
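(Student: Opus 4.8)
By Theorem~\ref{thm:E2page} the group $E_2^{0,4q}$ is one-dimensional, so it suffices to exhibit a single generator that survives to $E_\infty$; then every higher differential, in particular $d_2$, must vanish on it. The generator I would use is pulled back from the ambient complex Lie group. Since $\Gamma\subseteq\SLtwo(\ringOm)\subseteq\SLtwo(\bC)$ and $\SLtwo(\bC)$ deformation retracts onto its maximal compact subgroup $\mathrm{SU}_2\cong\sphere^3$, we have $\Cohomol^*(B\SLtwo(\bC);\ef)\cong\Cohomol^*(B\mathrm{SU}_2;\ef)\cong\ef[u]$ with $|u|=4$ and $\Cohomol^k(B\SLtwo(\bC);\ef)=0$ for $k\not\equiv0\bmod 4$.

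First I would pull $u$ back along $B\Gamma\to B\SLtwo(\bC)$ to a class $u_\Gamma\in\Cohomol^4(\Gamma;\ef)$, and consider the powers $u_\Gamma^q\in\Cohomol^{4q}(\Gamma;\ef)$. The key point is that $u_\Gamma^q$ restricts to a generator of $\Cohomol^{4q}(\Gamma_\sigma;\ef)$ on every cell stabilizer $\Gamma_\sigma$ for which that group is nonzero. Indeed, $\Cohomol^4(\Gamma_\sigma;\ef)\neq0$ forces $|\Gamma_\sigma|$ to be even, hence $-\mathrm{Id}\in\Gamma_\sigma$ (the unique involution of $\mathrm{SU}_2$); since $u$ restricts nontrivially to $\Cohomol^4(\{\pm\mathrm{Id}\};\ef)$ (checked through the maximal torus of $\mathrm{SU}_2$), the restriction $res^{\Gamma}_{\Gamma_\sigma}(u_\Gamma)$ is the nonzero element of $\Cohomol^4(\Gamma_\sigma;\ef)\cong\ef$, and by Proposition~\ref{prop:SLfincoh} --- all the occurring finite subgroups $\Z/2,\Z/4,\Z/6,\Q,\Di,\Te$ having period-$4$ cohomology with one-dimensional degree-$4k$ part --- its $q$-th power generates $\Cohomol^{4q}(\Gamma_\sigma;\ef)\cong\ef$.

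Next I would invoke the edge homomorphism of the equivariant spectral sequence: the restriction map $\Cohomol^{4q}(\Gamma;\ef)\to\prod_{v\in{}_\Gamma\backslash X^{(0)}}\Cohomol^{4q}(\Gamma_v;\ef)$ has image exactly $E_\infty^{0,4q}\subseteq E_2^{0,4q}$. By the previous paragraph the image of $u_\Gamma^q$ under this map is the (constant) nonzero element of $E_2^{0,4q}\cong\ef$, i.e.\ a generator. Hence this generator lies in $E_\infty^{0,4q}$, so $d_r$ annihilates it for all $r\geq2$; in particular $d_2$ vanishes on $E_2^{0,4q}$.

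The step I expect to require the most care is the second one: keeping track of exactly which finite groups occur as cell stabilizers (all containing $-\mathrm{Id}$ once of even order --- see Section~\ref{sec: background}) and verifying that the polynomial generator of $\Cohomol^*(B\mathrm{SU}_2;\ef)$ restricts nontrivially to the central $\Z/2$ inside each of them. A variant staying closer to the proof of Lemma~\ref{lem:d2evenvanish} works with Steenrod squares: since $Sq^2u=0$ in $\Cohomol^6(B\mathrm{SU}_2;\ef)=0$, one gets $Sq^2=0$ on $\Cohomol^4(\Gamma_\sigma;\ef)$ for every stabilizer, so $Sq^2$ annihilates the generator $\xi$ of $E_2^{0,4q}$; and $Sq^2\colon E_2^{2,4q-1}\to E_2^{2,4q+1}$ is injective, because on the $E_1$ page it is the entrywise isomorphism $e_1^{4q-1}\mapsto e_1^{4q+1}$ and the images of $d_1$ in these two (odd) degrees correspond. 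By Property~\ref{ss4}, $Sq^2d_2\xi=d_2Sq^2\xi=0$, which forces $d_2\xi=0$.
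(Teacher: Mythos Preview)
Your main argument is correct and takes a genuinely different route from the paper's. The paper proves the lemma entirely with Steenrod operations: it observes that $Sq^2$ vanishes on the degree-$4$ polynomial class in each vertex stabilizer, while $Sq^2\colon E_2^{2,4q-1}\to E_2^{2,4q+1}$ is injective (by the dimension equality $\dim E_2^{2,4q-1}=\dim E_2^{2,4q+1}=a_2$ from Theorem~\ref{thm:E2page} together with the fact that $Sq^2$ is an isomorphism on the $E_1$ column of $\Z/2$-stabilizers), and then invokes $Sq^2 d_2=d_2 Sq^2$. This is exactly your ``variant'' paragraph, so you have in fact reproduced the paper's proof there; your justification of $Sq^2=0$ on the source via pullback from $\Cohomol^6(B\mathrm{SU}_2;\ef)=0$ is a clean way to avoid the case-by-case check.

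Your primary argument, by contrast, is more conceptual: you exhibit the generator of $E_2^{0,4q}\cong\ef$ as the edge image of the pullback of the second Chern class $u\in\Cohomol^4(B\mathrm{SU}_2;\ef)$, using that every finite subgroup of $\SLtwo(\ringOm)$ of even order contains the unique involution $-\mathrm{Id}$ and that $u$ restricts nontrivially to it (via the maximal torus). This directly shows the class is a permanent cocycle, hence $d_r$ vanishes on it for all $r\geq 2$, not just $r=2$. The trade-off is that your approach imports an external topological input ($B\mathrm{SU}_2$), whereas the paper's argument stays within the equivariant spectral sequence and the Steenrod-operation machinery already set up in Section~\ref{ssec: specseq}. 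Both are valid; yours gives a bit more (all $d_r$ at once) and explains \emph{why} the class survives, while the paper's fits more seamlessly with the surrounding lemmas (\ref{lem:d2evenvanish}, \ref{lem:d2 mod3}) that use the same $Sq^k d_2=d_2 Sq^k$ mechanism.
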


\begin{proof}
By periodicity, it is sufficient to show this in $E_2^{0,4}$.  If $d_2$ does not vanish, then its image is in $E_2^{2,3}$ which is generated by a class in $\oplus \Cohomol^3(\Z/2)$, where the sum is over $2$-cells in $_\Gamma \backslash X$.  We follow the argument in Lemma~\ref{lem:d2evenvanish}, noting that since $\dim_{\ef} E_2^{2,3} = \dim_{\ef} E_2^{2,5}$, $Sq^2$ of any class in $E_2^{2,3}$ is non-zero.  On the other hand, $Sq^2$ of the $4$-dimensional polynomial class is always $0$.  Now Property~\ref{ss4} in Section~\ref{ssec: specseq} implies $Sq^2 d_2 = d_2 Sq^2$, which forces the vanishing result.  
\end{proof}

The next few results relate to components of type $\circlegraph$.  

\begin{lemma}\cite{BerkoveRahm}*{Lemma 33}\label{lem:BR33}
The $d_2$ differential is nontrivial on cohomology on components of type $\circlegraph$ in degrees $q \equiv 1 \bmod 4$
if and only if it is nontrivial on these components in degrees $q \equiv 3 \bmod 4$.
\end{lemma}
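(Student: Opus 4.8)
By Lemma~\ref{splitting over components} I would work on one connected component $C$ of type $\circlegraph$ at a time. On $C$ the differential $d_1$ vanishes (the remark after Lemma~\ref{circle-contribution}), so Property~(\ref{ss3}) of Section~\ref{ssec: specseq} identifies the vertical edge $E_2^{0,*}|_C$ with the ring $\Cohomol^*(\Z/4)\cong\ef[e_2](b_1)$ of the vertex stabilizer (Proposition~\ref{prop:SLfincoh}); write $\sigma_q := b_1\, e_2^{(q-1)/2}$ for the generator of $E_2^{0,q}|_C$ in odd degree $q$. Since $C$ meets only the columns $p\in\{0,1\}$ and the spectral sequence collapses at $E_3$, the one differential that can be nonzero on the $C$-summand is $d_2\colon E_2^{0,q}|_C\to E_2^{2,q-1}$, and ``$d_2$ nontrivial on $C$ in degree $q$'' means $d_2(\sigma_q)\neq 0$.

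The main tool is a Steenrod operation in the spectral sequence. Using $Sq^1 b_1 = Sq^1 e_2 = 0$ and $Sq^2 e_2 = e_2^2$ in $\Cohomol^*(\Z/4)$, a Cartan-formula computation gives $Sq^2(\sigma_q) = \sigma_{q+2}$ whenever $q\equiv 3\pmod 4$. For such $q$ (hence $q\ge 3$), Property~(\ref{ss4}) lets $Sq^2$ commute past $d_2$, so $d_2(\sigma_{q+2}) = Sq^2\bigl(d_2(\sigma_q)\bigr)$. I would then check that $Sq^2\colon E_2^{2,q-1}\to E_2^{2,q+1}$ is an isomorphism when $q-1\equiv 2\pmod 4$: the column $E_2^{2,*}$ is the cokernel of a $d_1$ assembled from restriction maps among the stabilizers $\Z/2$ and $\Z/4$, while on $\Cohomol^{q-1}(\Z/2)=\ef\langle e_1^{q-1}\rangle$ one has $Sq^2(e_1^{q-1}) = \binom{q-1}{2}e_1^{q+1}$ and on $\Cohomol^{q-1}(\Z/4)=\ef\langle e_2^{(q-1)/2}\rangle$ one has $Sq^2(e_2^{(q-1)/2}) = \tfrac{q-1}{2}e_2^{(q+1)/2}$, with both coefficients odd in this congruence range; since $Sq^2$ commutes with restriction maps it is an isomorphism of the defining two-term complex, hence of its cokernel. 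Combining, $d_2(\sigma_q)\neq 0 \iff d_2(\sigma_{q+2})\neq 0$ for every $q\equiv 3\pmod 4$, so each degree $\equiv 3\pmod 4$ is tied to the next degree $\equiv 1\pmod 4$.

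To promote this to the full statement I would use two further facts. First, $d_2$ is multiplicative (Property~(\ref{ss2})), and since $\sigma_{q+2} = e_2\cdot\sigma_q$ we get $d_2(\sigma_{q+2}) = d_2(e_2)\cdot\sigma_q + e_2\cdot d_2(\sigma_q)$, where the cross term vanishes: $\sigma_q$ is a multiple of $b_1$, the restriction $\Cohomol^1(\Z/4)\to\Cohomol^1(\Z/2)$ is zero (the order-two subgroup of $\Z/4$ is generated by a square), so $\sigma_q$ restricts trivially to the central $\Z/2$ stabilizing each $2$-cell and has trivial product with the column $E_2^{2,*}$; thus $d_2(\sigma_q)=0 \Rightarrow d_2(\sigma_{q+2})=0$ for all odd $q$, so ``$d_2$ nontrivial on $C$ in degrees $\equiv 1\pmod 4$'' forces $d_2(\sigma_1)\neq 0$ and likewise for $\equiv 3\pmod 4$ with $\sigma_3$. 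Second, the spectral sequence is eventually $4$-periodic (Theorem~\ref{thm:E2page}). These three ingredients force the Boolean ``$d_2$ nontrivial on $C$ in degrees $q\equiv 1\pmod 4$'' to agree with ``$d_2$ nontrivial on $C$ in degrees $q\equiv 3\pmod 4$''; summing over the $\circlegraph$ components gives the lemma.

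The step I expect to be the real obstacle is the very bottom of the spectral sequence. The Steenrod commutation $d_2 Sq^2 = Sq^2 d_2$ is unavailable in degree one, and $Sq^2$ is identically zero on $E_2^{2,0}$, so neither tool detects $d_2(\sigma_1)=d_2(b_1)\in E_2^{2,0}$; forcing the equivalence in the lowest degrees therefore rests on the multiplicativity relation $d_2(\sigma_3) = e_2\cdot d_2(\sigma_1)$ together with the claim that multiplication by $e_2$ does not annihilate the image of $d_2^{0,1}|_C$, which one must read off from the position of that image relative to the vertex of $C$ — here the hypothesis that cell stabilizers fix their cells pointwise is what makes the support bookkeeping tractable. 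A secondary, routine, point is the $E_1$-to-$E_2$ compatibility of the cup-product and $Sq^2$ formulas used above; and one should note that, in contrast with Lemma~\ref{lem:d2evenvanish}, here $d_2(e_2)$ need not vanish on $C$ when $c>0$, so the cross term genuinely has to be argued away rather than dropped.
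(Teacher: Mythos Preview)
The paper does not supply its own proof of this lemma; it is quoted directly from \cite{BerkoveRahm}*{Lemma~33} without argument, so there is no in-paper proof to compare your proposal against. What you have written is therefore an independent reconstruction rather than a paraphrase.

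Your strategy---combining the Steenrod compatibility $Sq^2 d_2 = d_2\, Sq^2$ with the Leibniz rule for $d_2$ and the $4$-periodicity of the $E_2$-page---is the natural one and is essentially the approach of the cited source. The computation $Sq^2(\sigma_q)=\sigma_{q+2}$ for $q\equiv 3\pmod 4$ is correct (since $Sq^1 b_1 = Sq^1 e_2 = 0$ in $\Cohomol^*(\Z/4)$ and $Sq^2(e_2^m)=m\,e_2^{m+1}$), as is the claim that $Sq^2$ acts bijectively on $E_2^{2,q-1}$ in those degrees: the $2$-cell stabilizers are all $\Z/2$, and $Sq^2(e_1^{q-1})=\binom{q-1}{2}e_1^{q+1}$ is nonzero when $q-1\equiv 2\pmod 4$. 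The logical skeleton you lay out---downward propagation of nontriviality via Leibniz, then the bridge $(q=3)\leftrightarrow(q=5)$ via $Sq^2$---does close the biconditional once the cross term vanishes.

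The one place your argument is thin is exactly where you flag it: the vanishing of $d_2(e_2)\cdot\sigma_q$. Your restriction heuristic is correct in spirit, but you should make the $E_1$-level product explicit. The pairing $E_1^{0,q}\otimes E_1^{2,s}\to E_1^{2,q+s}$ is computed by restricting the vertex-supported class to the $2$-cell stabilizer and then cupping there; since $\mathrm{res}^{\Z/4}_{\Z/2}(b_1)=0$ (else $b_1^2=0$ would map to $e_1^2\neq 0$), the product $\sigma_q\cdot(\text{anything in }E_1^{2,*})$ vanishes already on $E_1$ and hence on $E_2$. With that sentence inserted, the cross term is genuinely disposed of rather than asserted away. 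The low-degree worry you raise at the end is then absorbed by the $4$-periodicity you already invoke, so it is not a separate gap.
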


\begin{lemma}\label{lem:circle2periodic}  
Let the non-central $2$-torsion subcomplex $X_s$ admit as  quotient components of type
$\circlegraph$ only.  Then the $d_2$ differential vanishes on $E_2^{0,2q}$.
\end{lemma}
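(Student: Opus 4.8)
The plan is to reduce the statement to the two vanishing lemmas already established, by showing that when all quotient components of $X_s$ are of type $\circlegraph$, the only possibly-nonzero $d_2^{0,2q}$ differentials would have to land in a target that is forced to vanish. First I would invoke Theorem~\ref{thm:E2page} together with Lemma~\ref{circle-contribution}: when $X_s$ consists solely of $\circlegraph$-components, every component contributes a copy of $\ef$ to $E_2^{0,q}$ and $E_2^{1,q}$ for all $q$, and there are no $\Q$ or $\Te$ vertex stabilizers, so $v = 0$, $\sign(v) = 0$, and $X_s^\prime$ is empty. In particular $\Cohomol^2_\Gamma(\xsp) = 0$, and the parameters $a_i$ in Theorem~\ref{thm:E2page} are controlled purely by the topology of $_\Gamma\backslash X$.

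The key observation is that in this situation $c = 0$: the cokernel of $\Cohomol^1(_\Gamma\backslash X) \to \Cohomol^1(_\Gamma\backslash X_s)$ is trivial because each $\circlegraph$-loop's fundamental class already comes from a loop in the quotient space $_\Gamma\backslash X$ (this is exactly the geometric content of $c$ recalled just before Lemma~\ref{lem:d2evenvanish}; a reduced non-central $2$-torsion subcomplex all of whose components are circles imposes no new $1$-cycles beyond those visible in $X$). Granting $c = 0$, Lemma~\ref{lem:d2evenvanish} immediately gives that $d_2$ vanishes on $E_2^{0,4q+2}$. For the remaining even rows $q \equiv 0 \bmod 4$, Lemma~\ref{lem:d24qVanish} gives the vanishing of $d_2$ on $E_2^{0,4q}$ with no hypothesis at all. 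Combining the two covers every $E_2^{0,2q}$, which is the assertion.

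I would organize the write-up as: (1) note $v = 0$ and $\xsp = \emptyset$, hence $c = 0$ by the geometric description of $c$; (2) apply Lemma~\ref{lem:d2evenvanish} for $q \equiv 2 \bmod 4$; (3) apply Lemma~\ref{lem:d24qVanish} for $q \equiv 0 \bmod 4$; (4) conclude. The main obstacle is step~(1) — carefully justifying that $c = 0$ when all components are circles, i.e.\ that no loop of $X_s$ becomes null-homologous (or merely identified with another loop) in the ambient quotient $_\Gamma\backslash X$ in a way that would enlarge the cokernel. This is plausible from the graph-of-groups picture (a $\circlegraph$-component is an HNN extension whose stable letter is already a hyperbolic element of $\Gamma$ visible in $X$), but it is the one place where one must actually use that there are no bifurcation vertices to create extra relations among the $1$-cycles; everything after that is a direct citation of the two lemmas.
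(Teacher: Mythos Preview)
Your proposed reduction has a genuine gap at step (1): the claim that $c = 0$ whenever all components of $_\Gamma\backslash X_s$ are of type $\circlegraph$ is \emph{false}. The quantity $c$ measures whether the loops of $_\Gamma\backslash X_s$ are hit by $1$-cycles of the ambient orbit space $_\Gamma\backslash X$, and nothing about the absence of bifurcation vertices prevents a $\circlegraph$-loop from bounding (or becoming homologically dependent) in $_\Gamma\backslash X$. Concretely, the worked example $\Gamma_0(5) \subset \SLtwo(\Z[\sqrt{-2}])$ in Section~\ref{Example computations} has $_\Gamma\backslash X_s = \circlegraph\,\circlegraph$ and, once one computes $r = \rank d_2^{0,1} = 1$, the appendix table gives $c = 2 - r = 1 > 0$. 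The corollary immediately following the lemma you are trying to prove keeps $c$ as a free parameter precisely because it need not vanish, and the remark after Lemma~\ref{lem:d2evenvanish} explicitly warns that $d_2^{0,2q}$ may vanish even when $c \neq 0$---so Lemma~\ref{lem:d2evenvanish} alone cannot carry the argument.

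The paper's proof is therefore genuinely different and cannot be shortcut in this way: it compares the equivariant spectral sequence with the Lyndon--Hochschild--Serre spectral sequence for $1 \to \Z/2 \to \Gamma \to \text{P}\Gamma \to 1$, tracks the fate of the class $z_1^2$ coming from the central $\Z/2$, and uses the Kudo transgression-type relation $d_3^{\rm LH3S}(z_1^2) = Sq^1 d_2^{\rm LH3S}(z_1)$ together with an analysis of where $d_2^{\rm LH3S}(z_1)$ can land to show $z_1^2$ is a permanent cocycle. That argument does not assume $c = 0$. Your step~(3) invoking Lemma~\ref{lem:d24qVanish} for $q \equiv 0 \bmod 4$ is fine, but for $q \equiv 2 \bmod 4$ you need a replacement for the failed step~(1).
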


\begin{proof}
 To prove this lemma, we will compare results from two different spectral sequences, the equivariant spectral sequence for the PSL$_2$ group (ESS), and the Lyndon-Hochschild-Serre spectral sequence (LH3S).  We will distinguish the differentials in the spectral sequences via their superscripts.  
 Let  $\Gamma \subseteq \SLO$ be a congruence subgroup and $\text{P}\Gamma$ be its image in $\PSLO$.  By the discussion after Definition~\ref{non-central torsion subcomplex}, the reduced non-central $2$-torsion subcomplex of $\Gamma$ and reduced $2$-torsion subcomplex of $\text{P}\Gamma$ are the same (although the cell stabilizers differ by a central $\Z/2$).  

Recall from Theorem~\ref{thm:E2page} that under the assumptions of this lemma, $\xsp$ is empty and $\sign(v) = 0$.  Therefore, it is sufficient to show that the 2-dimensional class in $E_2^{0,2}$ from Theorem~\ref{thm:E2page} is a permanent cocycle.  We note that this class is the unique polynomial generator in $\Cohomol^*(\Gamma)$.    Let $\Gamma$ be a congruence subgroup where $\text{P}\Gamma \subseteq \PSLO$ has a reduced $2$-torsion subcomplex which consists solely of $k$ components of type $\circlegraph$.    In $\PSLO$, components of type $\circlegraph$ correspond to a graph of groups where there is a single vertex and single edge, both with $\Z/2$ stabilizer.   
This, in turn, corresponds to the HNN extension $\langle t, x | x^ 2 = 1, t^{-1} x t = x \rangle$.  In other words, in $\PSLO$, 
$\Cohomol^*(\circlegraph) \cong \Cohomol^*(\Z \times \Z/2)$. By an argument similar to the one after Lemma \ref{circle-contribution}, the $d_1$ differential vanishes on $E_1^{0,q}$ for $q > 0$.  We conclude that the $E_2$ page of the ESS for $\Cohomol^*(\text{P}\Gamma)$ has the form
\[
\begin{array}{l | l r r}
3 & (\ef)^{k}  &   (\ef)^{k}        &   \\
2 & (\ef)^{k}  &   (\ef)^{k}        &   \\
1 & (\ef)^{k}  &   (\ef)^{k}        &   \\
0 & \ef          &  \oplus_{\beta^1} \ef  & \oplus_{\beta^2} \ef \\
\hline
   &  0          &  1    &   2
\end{array}
\]  
where $\beta^q = \dim_{\F_2}\Cohomol^q(_\Gamma \backslash X ; \thinspace \F_2)$.
Once the $\dE$ differential is determined in this spectral sequence, $E_3 = E_\infty$ and the calculation is complete. 
We note that in the ESS for $\Cohomol^*(\text{P}\Gamma)$, classes in $E_2^{1,q}$  with $q > 0$ survive to $E_\infty$ and are products of an exterior class with a polynomial class.  Classes in $E_2^{0,q}$ with $q > 0$ are polynomial by Property~\ref{ss3} in Section~\ref{ssec: specseq}.   

The set-up is similar for the calculation of $\Cohomol^*(\Gamma)$ using the ESS.  As noted above, the $2$-torsion subcomplex for $\Gamma$ is the same as the reduced non-central $2$-torsion subcomplex for $P\Gamma$.  However, although in $\SLO$ a $\circlegraph$ component is again a single edge and single vertex, this time the cell stabilizers are isomorphic to $\Z/4$.  As a graph of groups, this corresponds to an HNN extension $\Z/4 *_{\Z/4}$ which has the presentation $\langle t, x | x^ 4 = 1, t^{-1} x t = x^i \rangle$ with $i = \pm 1$.  Analogous to the argument presented after Lemma \ref{circle-contribution}, in this situation $d_1$ vanishes and $\Cohomol^*(\circlegraph) \cong \Cohomol^*(\Z \times \Z/4; \ef)$.  We note for later that $\dim_{\F_2} \Cohomol^j(\Z \times \Z/4; \ef) = 2$ when $j > 0$.

We next determine $\Cohomol^*(\Gamma)$  using the Lyndon-Hochschild-Serre spectral sequence associated to the extension 
\[
1 \rightarrow \Z/2 \rightarrow \Gamma \rightarrow \text{P}\Gamma \rightarrow 1.
\]
Analogous to the calculation of $\Cohomol^*(\Z/4)$ from $\Cohomol^*(\Z/2)$, it follows that $\dL_2 \neq 0$.  To analyze the image of this differential, let $z_1$ be the polynomial generator of the cohomology ring of the central $\Z/2$ which forms the vertical edge of the LH3S.  The horizontal edge of this spectral sequence can be identified with $\Cohomol^*(\text{P}\Gamma)$ with untwisted coefficients.  We will show that the class represented by $z_1^2$ survives to $E_\infty$; this will be our unique polynomial generator.

Assume that $\dL_2$ has a non-zero image in some $\circlegraph$ component of $\Cohomol^*(\text{P}\Gamma)$ on the horizontal edge of the spectral sequence.   By direct calculation, one can show that if in this component the image of $\dL_2$ is the exterior class from $\Cohomol^*(\circlegraph)$, then in the abutment, the $\F_2$-dimension of the resulting cohomology in dimension $k$ associated to that component is $k+1$.  This is not possible, as Theorem \ref{thm:E2page} implies that $\Cohomol^*(\Gamma)$  is $4$-periodic.   We conclude that if the image of $\dL_2$ is non-zero, then its image must be the polynomial class.  
By Property~\ref{ss2} in Section~\ref{ssec: specseq} there is a multiplicative structure in the ESS which is compatible with the cup product structure.  This structure implies that the image of $\dL_2$ cannot live in $E_2^{1,1}$ or $E_2^{2,0}$, so it must live in $E_2^{0,2}$.

Moving on to the $E_3$ page of the LH3S, we calculate using the compatibility between $Sq^1$ and the differential, applying the general version of Property~\ref{ss4} in Section~\ref{ssec: specseq}  as given in~\cite{Singer}*{Theorem 2.17}.  We have
\[
\dL_3 \left((z_1)^2\right) = \dL_3\left(Sq^1(z_1)\right) = Sq^1 \dL_2(z_1). 
\]
We claim that $Sq^1 \dL_2(z_1) = 0$.  The summands in $\dL_2(z_1)$ lie on the horizontal edge of the spectral sequence, hence they originate in $\Cohomol^2(\text{P}\Gamma)$.  From the analysis of the image of $\dL_2$ at the end of the prior paragraph, these classes originate in $E_2^{0,2}$, which in turn is generated by squared $1$-dimensional polynomial classes which map to $0$ under $Sq^1$.  Therefore, $\dL_3$ is the zero map on $E_2^{0,2}$.  We conclude that this class, represented by $z_1^2$, is a permanent cocycle and hence is the $2$-dimensional polynomial class in $\Cohomol^*(\Gamma)$.
\end{proof}

Combining Lemmas~\ref{lem:d24qVanish},~\ref{lem:BR33} and~\ref{lem:circle2periodic} with Theorem~\ref{thm:E2page} and Lemma~\ref{circle-contribution}, we obtain
\begin{corollary}
 Let the non-central $2$-torsion subcomplex $X_s$ admit as quotient $k \geq 1$ components of type $\circlegraph$.
Then $$\dim_{\ef} \Cohomol^q(\Gamma;\thinspace \ef) = 
\begin{cases}
 \beta^1 +k -r, & q = 1,\\
 \beta^2 +\beta^1 +k +c -r, & q \geq 2,\\
\end{cases}$$
 where  $r := \rank d_2^{0,1}$, $c$ the co-rank of Section~\ref{Section:E2 page}
 and $\beta^q = \dim_{\F_2}\Cohomol^q(_\Gamma \backslash X ; \thinspace \F_2)$.
\end{corollary}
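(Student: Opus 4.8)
The plan is to substitute the special shape of the reduced $2$-torsion subcomplex into Theorem~\ref{thm:E2page}, isolate the one family of differentials that can be nonzero, compute its rank using the cited lemmas together with the multiplicative structure, and read the group cohomology off the $E_\infty$-page.

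\emph{Step 1 (the $E_2$-page).} Since every quotient component of the reduced non-central $2$-torsion subcomplex is of type $\circlegraph$, no vertex has stabilizer $\Q$ or $\Te$, so $\xsp$ is empty and $v=0$, hence $\sign(v)=0$; also $\chi(_\Gamma\backslash X_s)=0$, as $_\Gamma\backslash X_s$ is $k$ disjoint circles, and Lemma~\ref{circle-contribution} gives $E_2^{p,q}(X_s)\cong(\ef)^k$ for $p\in\{0,1\}$ and $q\geq1$. Feeding this into Theorem~\ref{thm:E2page} gives $a_1=\beta^1+c-1$, $a_2=\beta^2+c$, $a_3=\beta^1$, and one finds that for every $q\geq0$ the entry $E_2^{p,q}$ depends only on $p$ and the parity of $q$: for $q$ even it is $\ef$, $(\ef)^{\beta^1}$, $(\ef)^{\beta^2}$ in columns $p=0,1,2$, and for $q$ odd it is $(\ef)^k$, $(\ef)^{k+\beta^1+c-1}$, $(\ef)^{\beta^2+c}$.

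\emph{Step 2 (the differentials).} The $E_2$-page is concentrated in columns $p\in\{0,1,2\}$ and $d_r$ has bidegree $(r,1-r)$, so the only possibly-nonzero differential is $d_2^{0,q}\colon E_2^{0,q}\to E_2^{2,q-1}$, and $E_3=E_\infty$. Lemmas~\ref{lem:d24qVanish} and~\ref{lem:circle2periodic} give $d_2^{0,q}=0$ for all even $q$. For odd $q$, the degree-$2$ class $e$ spanning $E_2^{0,2}$ is a permanent cocycle (proof of Lemma~\ref{lem:circle2periodic}); by Property~\ref{ss2} in Section~\ref{ssec: specseq}, cup product with $e$ is an endomorphism of the spectral sequence commuting with every $d_r$, and on the $E_1$-page it restricts on each cell to multiplication by the polynomial generator of the mod $2$ cohomology ring of that cell's stabilizer. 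Every $0$- and $1$-cell has stabilizer $\Z/2$, $\Z/4$ or $\Z/6$ and every $2$-cell has the central $\Z/2$ as stabilizer; by Propositions~\ref{prop:SLfincoh} and~\ref{prop:restriction} the restriction of $e$ to each of these subgroups is the nonzero polynomial class, so cup product with $e$ induces isomorphisms $E_2^{p,q}\xrightarrow{\ \sim\ }E_2^{p,q+2}$ for all $p,q\geq0$. By the Leibniz rule this conjugates $d_2^{0,q}$ onto $d_2^{0,q+2}$, so $\rank d_2^{0,q}$ is a single value $r:=\rank d_2^{0,1}$ for all odd $q$ (the passage between residues $1$ and $3$ modulo $4$ being precisely Lemma~\ref{lem:BR33}).

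\emph{Step 3 (assembling).} For $q\geq1$ we have $\dim_{\ef}\Cohomol^q(\Gamma)=\dim E_\infty^{0,q}+\dim E_\infty^{1,q-1}+\dim E_\infty^{2,q-2}$. Here $d_2$ removes $r$ from $\dim E_2^{0,q}$ (a kernel) when $q$ is odd and removes $r$ from $\dim E_2^{2,q-2}$ (a cokernel) when $q$ is even; since these cases fall in disjoint total degrees, each $\Cohomol^q(\Gamma)$ with $q\geq1$ loses exactly one copy of $r$. Inserting the Step 1 values, and using $E_2^{1,0}=(\ef)^{\beta^1}$, $E_2^{2,-1}=0$ for $q=1$, yields $\dim_{\ef}\Cohomol^1(\Gamma)=(k-r)+\beta^1$, while for $q\geq2$ the three relevant entries always sum (after the single $-r$) to $\beta^1+\beta^2+k+c-r$, independently of the residue of $q$ modulo $4$. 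The delicate point — and what I would check most carefully — is the rank-constancy in Step 2, which rests on cup product with $e$ being an \emph{isomorphism} on the $p=2$ row; this in turn amounts to $e$ restricting to the nonzero square class on the central $\Z/2$ that stabilizes every $2$-cell. With that in hand the rest is bookkeeping on the $E_\infty$-page.
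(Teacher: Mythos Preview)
Your proof is correct and follows the same approach as the paper, which simply records that the corollary is obtained by combining Theorem~\ref{thm:E2page} and Lemma~\ref{circle-contribution} (your Step~1) with Lemmas~\ref{lem:d24qVanish}, \ref{lem:BR33} and~\ref{lem:circle2periodic} (your Step~2). Your Step~3 bookkeeping is exactly the intended reading-off of the $E_\infty$-page.

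One remark on your Step~2: the extra argument you supply---that cup product with the class $e\in E_2^{0,2}$ induces isomorphisms $E_2^{p,q}\to E_2^{p,q+2}$ because $e$ restricts to the polynomial generator on every cell stabilizer---is more than the paper needs, and your list of possible stabilizers omits $\Di$ (which can occur as a vertex stabilizer in the unreduced complex). This does not damage the argument, since $\Cohomol^*(\Di)\cong\Cohomol^*(\Z/4)$ by Proposition~\ref{prop:SLfincoh} and $res^{\Di}_{\Z/4}$ is an isomorphism by Proposition~\ref{prop:restriction}; but strictly speaking you should include it. In any case, you correctly fall back on Lemma~\ref{lem:BR33} for the key rank equality between residues $1$ and $3$ modulo~$4$, and the passage from $q$ to $q+4$ is the $4$-periodicity already built into Theorem~\ref{thm:E2page}, so your conclusion that $\rank d_2^{0,q}=r$ for all odd $q$ stands.
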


This next result about arbitrary reduced $2$-torsion subcomplex components  is motivated by Lemmas 34 and 35 in~\cite{BerkoveRahm}.     

\begin{lemma} \label{lem:d2 mod3}
The $d_2$ differential is trivial in dimensions congruent to $ 3 \bmod 4$ 
on all non-central $2$-torsion subcomplex components which are not of type $\circlegraph$.
\end{lemma}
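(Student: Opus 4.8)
The plan is to mimic the Steenrod-operation argument used in Lemmas~\ref{lem:d2evenvanish} and~\ref{lem:d24qVanish}, now applied to the odd rows $q \equiv 3 \bmod 4$. By periodicity it suffices to treat $E_2^{0,3}$. Fix a component $C$ of type $\edgegraph$, $\thetagraph$, or $\dumbbellgraph$, so that every vertex stabilizer is $\Q$ or $\Te$ and every edge stabilizer is $\Z/4$. The possibly nonzero $d_2$ on $E_2^{0,3}(X_s)$ lands in $E_2^{2,2}$; as in the earlier lemmas this target is a subquotient of $\oplus \Cohomol^2(\Z/2)$, the sum over $2$-cells of ${}_\Gamma\backslash X$, so it is an $\ef$-vector space.

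First I would identify the source. By Proposition~\ref{prop:SLfincoh} the degree-$3$ classes in the vertex cohomology are: for $\Te$, the exterior class $b_3$; for $\Q$, the degree-$3$ part of $\ef[e_4](x_1,y_1)/\langle R\rangle$, spanned by the products $x_1 e_2^{\,?}$—more precisely by the images of $x_1^2 y_1$-type monomials—which, crucially, are all products of a degree-$1$ exterior class with a degree-$2$ polynomial class $e_2$. The key point is that on these degree-$3$ generators the Steenrod square $Sq^2$ acts predictably: $Sq^2(b_3)$ and $Sq^2$ of the $\Q$-classes vanish (or are themselves products of exterior and polynomial classes already accounted for), whereas by the dimension count of Theorem~\ref{thm:E2page} (equivalently Theorem~\ref{thm:generalcohom}), $\dim_{\ef}E_2^{2,2}(C) = \dim_{\ef}E_2^{2,4}(C)$, which forces $Sq^2$ to be injective on $E_2^{2,2}(C)$ since those classes come from $\oplus\Cohomol^2(\Z/2)$ and $Sq^2$ on $\Cohomol^2(\Z/2)=\ef\langle e_1^2\rangle$ sends $e_1^2 \mapsto e_1^4 \neq 0$. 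Then Property~\ref{ss4} in Section~\ref{ssec: specseq}, $Sq^2 d_2 = d_2 Sq^2$, gives a contradiction unless $d_2$ is zero on $E_2^{0,3}(C)$: if $d_2 u \neq 0$ then $0 \neq Sq^2 d_2 u = d_2 Sq^2 u$, but $Sq^2 u$ lies in $E_2^{0,5}(C) \cong E_2^{0,1}(C)$ by periodicity and — being (a sum of) products of an exterior class with $e_4$-type polynomial classes — is killed by $d_2$ for the same structural reasons established for degree-$1$ and degree-$4$ classes, or else $Sq^2 u = 0$ outright. Either way $d_2 u = 0$.

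I would run this component by component, invoking Lemma~\ref{splitting over components} so the analysis is legitimately local to each $C$. The main obstacle I anticipate is the bookkeeping for the $\Q$-vertex contributions: one must check carefully, using Lemma~\ref{lem:Q8res} and the ring structure of $\Cohomol^*(\Q)$, that the surviving degree-$3$ classes in $E_2^{0,3}(X_s)$ really are spanned by classes of the form (exterior)$\cdot$(polynomial) so that $Sq^2$ of them is controlled, and to verify the equality $\dim E_2^{2,2}(C)=\dim E_2^{2,4}(C)$ from Theorem~\ref{thm:generalcohom} — which holds because both equal $3\frac{m}{2}+\frac{n}{2}$ on the $p=1$ column, and the $p=2$ column dimensions are likewise $q$-periodic with period dividing $4$ and equal in the relevant residues by the $E_2$-page shape of Theorem~\ref{thm:E2page}. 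Once that dimension equality and the $Sq^2$-triviality on the source are in hand, the commutation relation closes the argument exactly as in Lemmas~\ref{lem:d2evenvanish} and~\ref{lem:d24qVanish}.
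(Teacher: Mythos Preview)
Your approach is essentially the paper's: use the commutation $Sq^2 d_2 = d_2 Sq^2$, show $Sq^2$ vanishes on the source $E_2^{0,3}$, and show $Sq^2$ is injective on the target $E_2^{2,2}$ (since on $\Cohomol^*(\Z/2)$ one has $Sq^2(z_1^2) = z_1^4 \neq 0$). The paper executes the source step cleanly by computing $Sq^2$ directly on the degree-$3$ generators: $Sq^2(b_3) = 0$ in $\Cohomol^*(\Te)$ since $\Cohomol^5(\Te) = 0$, and $Sq^2(x_1^2 y_1) = x_1^4 y_1 = 0$ in $\Cohomol^*(\Q)$ by the ring relations.

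Two points to clean up. First, your description of $\Cohomol^3(\Q)$ is garbled: there is no class $e_2$ in $\Cohomol^*(\Q)$ (the polynomial generator is $e_4$ in degree~$4$), and $\Cohomol^3(\Q)$ is one-dimensional, spanned by $x_1^2 y_1 = x_1 y_1^2$. Second, and more importantly, drop the hedge that ``$Sq^2 u$ \ldots\ is killed by $d_2$ for the same structural reasons established for degree-$1$ and degree-$4$ classes'': no such vanishing has been established for $d_2^{0,1}$ --- indeed its rank $r^{0,1}$ appears as a potentially nonzero parameter in Theorem~\ref{mainthm}, so $d_2^{0,5}$ need not vanish either. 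That branch of your argument is both unnecessary (since $Sq^2 u = 0$ outright, as the direct computation shows) and incorrect. Once you commit to showing $Sq^2 = 0$ on the source directly, the argument is complete and matches the paper's.
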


\begin{proof}
By Theorem~\ref{prop:restriction}, the only non-trivial restriction map on cohomology in odd dimensions for finite subgroups is $res^{\Q}_{\Z/4}(x_1)  = b_1$.  
In particular, the restriction maps on cohomology are zero in dimensions $4k + 3$, so $d_1$ is trivial on these classes.  
Hence, these classes survive to the $E_2$ page.  We note that $Sq^2$ is trivial on classes in both $\Cohomol^3(\Te)$ and $\Cohomol^3(\Q)$.  
The former follows since $\Cohomol^5(\Te) = 0$; the latter follows since $Sq^2(x_1^2 y_1) = x_1^4 y_1 = 0$ by ring relations in $\Cohomol^*(\Q)$.  
On the other hand, $d_2: E_2^{0,3} \rightarrow E_2^{2,2} \cong \bigoplus \Cohomol^2(\Z/2)$, so the image of $d_2$ lies in $\bigoplus_S \Cohomol^2(\Z/2)$ where the finite sum is over $2$-cells in its support.  
In the cohomology rings $\Cohomol^*(\Z/2)$ in this sum,  $Sq^2(z_1^2) = z_1^4$.  
Since $d_2 Sq^2 = Sq^2 d_2$, we conclude that $d_2$ must vanish on $E_2^{0,3}$, and more generally on $E_2^{0,4k+3}$ by periodicity.     
\end{proof}

Now we have all the ingredients for the proof of Theorem~\ref{mainthm} that was stated in the introduction.
\begin{proof}[Proof of Theorem~\ref{mainthm}] Theorem~\ref{thm:E2page} gives us the general form of $E_2$ page of the equivariant spectral sequence:

\[  
\begin{array}{l | cccl}
q \equiv 3 \mod 4 &  E_2^{0,3}(X_s)     &   E_2^{1,3}(X_s) \oplus (\ef)^{a_1}  &   (\ef)^{a_2} \\
q \equiv 2 \mod 4 &  \Cohomol^2_\Gamma(\xsp) \oplus (\ef)^{1 -\sign(v)}     &    (\ef)^{a_3}&   \Cohomol^2(_\Gamma \backslash X)  \\
q \equiv 1 \mod 4 &  E_2^{0,1}(X_s)     &  E_2^{1,1}(X_s) \oplus  (\ef)^{a_1} &   (\ef)^{a_2} \\
q \equiv 0 \mod 4   & \F_2  &   \Cohomol^1(_\Gamma \backslash X) &  \Cohomol^2(_\Gamma \backslash X) \\
\hline  &  p = 0 & p = 1 &  p = 2
\end{array}
\]
with 
\[\begin{array}{ll}
a_1 & =  \chi(_\Gamma \backslash X_s) -1 +\beta^1(_\Gamma \backslash X) +c   \\
a_2 & =  \beta^{2} (_\Gamma \backslash X) +c \\
a_3 & =  \beta^{1} (_\Gamma \backslash X) +v -\sign(v)
\end{array}
\]  

We determine components of the $E_2$ page of the spectral sequence as follows:

\begin{enumerate}
\item We have $\dim_{\ef}  \Cohomol^2_\Gamma(\xsp) = 2m$, since only $\Q$ has non-trivial cohomology in dimension $2$ and $\Cohomol^2_\Gamma(\Q) = \ef^2$.

\item To calculate $\chi(_\Gamma \backslash X_s)$, we note that the Euler characteristic takes the following values on the connected components of $_\Gamma \backslash X_s$:
\begin{itemize}
	\item $0$ on a component of type $\circlegraph$;
	\item $1$ on a component of type $\edgegraph$;
	\item $-1$ on a component of type $\thetagraph$ or $\dumbbellgraph$.
\end{itemize}
From the two vertices involved per component, we see that there are $\frac{n}{2}$ components of type $\edgegraph$
and $\frac{m}{2}$ components of type either $\thetagraph$ or $\dumbbellgraph$.  
Therefore, $\chi(_\Gamma \backslash X_s) = \frac{n}{2}-\frac{m}{2}$.  This implies that $a_1 =  \frac{n}{2}-\frac{m}{2} - 1 + \beta^1 + c$.

\item We use Lemma~\ref{circle-contribution} to determine contributions of the connected components of ${}_\Gamma \backslash X_s$ of type $\circlegraph$.
For the other three types of connected components, we apply the proof of Theorem~\ref{thm:generalcohom}, which states in odd dimensions:
\begin{table}[h]
\begin{center}
\begin{tabular}{c | c c c c }
Involved types & $\dim_{\F_2}E_2^{0,1}$ & $\dim_{\F_2}E_2^{0,3}$ & $\dim_{\F_2}E_2^{1,1}$ & $\dim_{\F_2}E_2^{1,3}$   \\
\hline
$\thetagraph$,  $\dumbbellgraph$, $\edgegraph$ & $m$ & $m+n$ & $\frac{m}{2}+\frac{n}{2}$ & $3\frac{m}{2}+\frac{n}{2}$  \\
\end{tabular}
\end{center}
\end{table}

\item  We have $v = m+n$.  Since either $m > $ or $n > 0$ , $\sign(v) = 1$.  We conclude that $a_3 = \beta^1 + m + n - 1$.
\end{enumerate}

We can now write down the $E_3 = E_\infty$ page of the spectral sequence.   We let $r^{0,q}$ be the $\ef$-rank of the $d_2$ differential $E_2^{0,q} \rightarrow E_2^{2,q-1}$.  We substitute in the calculated values from the list and simplify.  
For clarity, we write down the dimensions of the $\ef$-vector space. 
\small 
\[  
\begin{array}{l | llll}
q \equiv 4 \mod 4 & 1  &   \beta^1 &  \beta^2 -  r^{0,1} \\
q \equiv 3 \mod 4  &  m + n + k  - r^{0,3}  
&  m +n +k - 1 + \beta^1 + c &   \beta^2 + c \\
q \equiv 2 \mod 4  & 2m - r^{0,2}   &    \beta^1 + m + n - 1 &   \beta^2 -  r^{0,3}\\
q \equiv 1 \mod 4  &  m + k  - r^{0,1} 
&  n +k  -1 + \beta^1 + c  &   \beta^2 + c -  r^{0,2}\\
q = 0   & 1  &   \beta^1 &  \beta^2  - r^{0,1} \\
\hline &  p = 0 & p = 1 &  p = 2
\end{array}
\]
\normalsize

We conclude:

\begin{center}
$\dim_{\ef} \Cohomol^q(\Gamma;\thinspace \ef) = $\small$
\begin{cases}
 \beta^1 +m +k -r^{0,1}, & q = 1,\\
 \beta^2 +\beta^1 +2m+n +k -1 +c -r^{0,1} -r^{0,2}, & q \equiv 2 \mod 4,\\
  \beta^2 +\beta^1 +2(m+n) +k -1 +c -r^{0,3} -r^{0,2}, & q \equiv 3 \mod 4,\\
  \beta^2 +\beta^1 + m+n +k +c -r^{0,3}, & q \equiv 4 \mod 4, \\
    \beta^2 +\beta^1 +m +k +c -r^{0,1}, & q \equiv 5 \mod 4.\\
\end{cases}$\normalsize
\end{center}

To get the special cases, we apply our vanishing results for the $d_2$ differential:
\begin{itemize}
  \item Lemma~\ref{lem:d2 mod3} for the case $k = 0$,
  \item Lemma \ref{lem:d2evenvanish} for the case $c = 0$.
\end{itemize}
\end{proof}

We have not been able to prove a result about the vanishing/non-vanishing of the $d_2$ differential in dimensions $\equiv 1 \bmod 4$.  
However, there is an alternative way to derive this information using the Universal Coefficient Theorem,
\[
 \Cohomol^1(\Gamma; \ef) \cong \ {\rm Hom}(\Homol_1(\Gamma; \Z), \ef).
\]
The rank of the right hand side is readily determined by finding the abelianization of $\Gamma$. 
By the table in Theorem~\ref{thm:E2page}, this rank is also equal to 
\[
 {\dim}_{\ef}  (E_2^{0,1}(X_s))  + {\dim}_{\ef} \Cohomol^1({}_\Gamma \backslash X )   - \rank(d_2^{0,1}).
\]

\section{Ford Fundamental Domains}\label{sec:Ford}

In this section, we present an algorithm which constructs a fundamental domain for a congruence subgroup that allows us to efficiently extract torsion subcomplexes as well as to determine the number and type of connected component types---we use these data to evaluate the formulas of Theorem~\ref{mainthm}.
In particular, we construct a Ford fundamental domain for the congruence subgroup $\go(\pi) \subset \pslo$ where $\langle \pi \rangle$ is a prime ideal in $\calO_{-11} = \Z[\omega]$, the ring of integers of the number field $\rationals[\sqrt{-11}]$, and $\omega = (-1+\sqrt{-11})/2$. We prove that, given a prime ideal $\langle \pi \rangle \subset \calO_{-11}$, there exists a Ford domain for the congruence subgroup $\text{P}\Gamma_0(\pi) \subset \pslo$ which has a particular structure similar to that found by Orive~\cite{Orive} for congruence subgroups of $\mathrm{PSL}_2(\mathbb Z)$. 

Due to the specific structure of these congruence subgroups of prime level, the Ford domains have a uniform structure, with one face for each residue class modulo $p$, and a small fixed number of additional faces. Similar Ford domains may be found for congruence subgroups of this form in other Bianchi groups. We first recall the definition of such a fundamental domain.

Given $\gamma \in \mathrm{Isom}^+(\mathbb{H}^3) \cong \mathrm{PSL}_2(\bC)$, the \emph{isometric sphere} $S_\gamma$ of $\gamma$ is defined to be the (hemi)sphere on which $\gamma$ acts as a Euclidean isometry; it can be shown that if 
$ \gamma = \begin{pmatrix} a & b \\ c & d\end{pmatrix},$ 
then $S_\gamma$ has radius $1/|c|$ and center $-d/c \in \bC$. The isometric sphere $S_{\gamma^{-1}}$ of the inverse $\gamma^{-1}$ has the same radius and center $a/c$. A \emph{Ford fundamental domain} $F$ for $\text{P}\Gamma$ is then the intersection between the region $B \subset \mathbb{H}^3$, exterior to all isometric spheres, and a fundamental domain $F_\infty$ for the subgroup $\text{P}\Gamma_\infty \subset \text{P}\Gamma$ of elements which fix $\infty$. For background on Ford domains, see, for example, Maskit~\cite{Maskit}, Chapter II.H.

We shall call $\emph{visible spheres}$ those isometric spheres which contribute to the boundary of $B$, because they are the spheres one ``sees" when viewing the boundary of $B$ from above; beneath them are infinitely many smaller spheres, each completely covered by the collection of spheres which bound $B$. We say that an isometric sphere $S$ \emph{covers} a point $p \in \bC$ if $p$ lies in the interior of $S$, and we say that the sphere $S$ is covered by other spheres $\{ S_i \}$ if the union of the interiors of the $S_i$ contains the interior of $S$.

We note that a Ford domain for $\pslo$ is constructed as follows. The region $B$ exterior to all isometric spheres is bounded by those isometric spheres of radius 1 centered at points of the ring $\calO_{-11}$. The fundamental domain $F_\infty$ can be taken to have vertices at 
\begin{eqnarray} \label{z1}
 \pm z_1 = \pm(3/11 + 6\omega/11) =\pm 3i/\sqrt{11}, \\
 \pm z_2 = \pm(-3/11+5\omega/11) = \pm (-1/2+5i/(2\sqrt{11})),\\
\label{z3}  \pm z_3 = \pm(8/11+5\omega/11) = \pm (1/2+5i/(2\sqrt{11})),
\end{eqnarray}
 and thus the Ford domain is the convex hyperbolic polyhedron with vertices at the cusp $\infty$ 
 and above these six points of $\bC$ at height $\sqrt{2/11}$ (see Figure \ref{figure:full_ford}).
 
 \begin{figure}[htb]
 \begin{center}
 \includegraphics[scale=0.6]{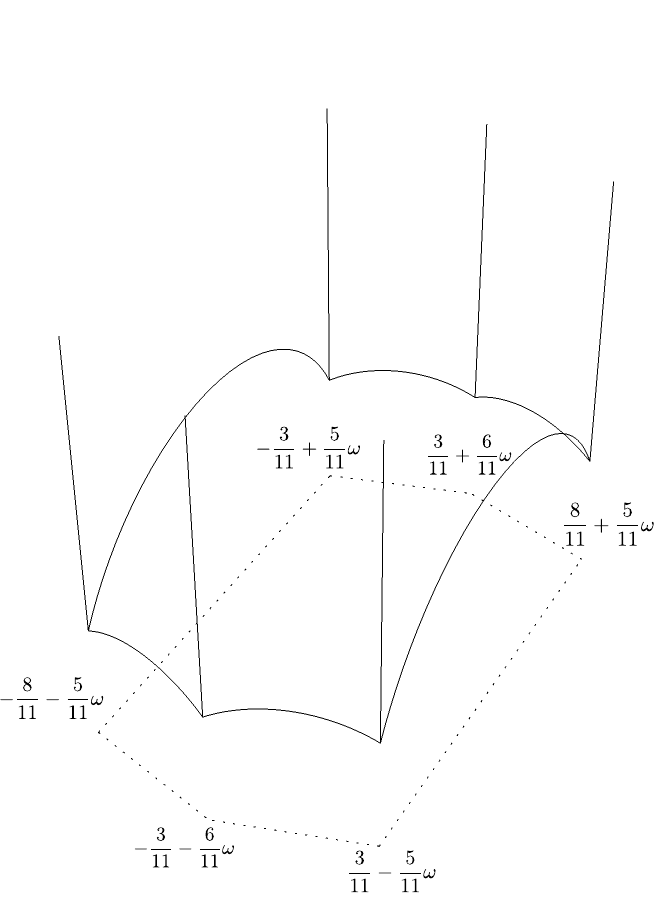}
 \caption{Ford domain for $\mathrm{PSL}_2(\mathcal{O}_{-11})$}
 \label{figure:full_ford}
 \end{center}
 \end{figure}

In the rest of this section, we will appeal to the following lemmas about isometric spheres of elements of $\pslo$.

\begin{lemma}\label{nozerocover}The only elements of $\pslo$ whose isometric spheres cover $0$ are those of the form
$ \begin{pmatrix}a & -1 \\ 1 & 0 \end{pmatrix} $
for $a \in \calO_{-11}$. \end{lemma}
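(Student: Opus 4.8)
The plan is to work directly with the formulas for the isometric sphere recalled just above the lemma: for $\gamma = \begin{pmatrix} a & b \\ c & d \end{pmatrix} \in \pslo$, the isometric sphere $S_\gamma$ has center $-d/c$ and radius $1/|c|$, so $S_\gamma$ covers $0 \in \bC$ exactly when $|{-d/c} - 0| < 1/|c|$, i.e. when $|d| < 1$. Since $d \in \calO_{-11}$ and the only element of $\calO_{-11}$ of absolute value strictly less than $1$ is $0$ (the nonzero elements of $\calO_{-11} = \Z[\omega]$ all have norm $\geq 1$, as $\calO_{-11}$ has no elements of norm in $(0,1)$), we conclude $d = 0$. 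First I would record this reduction carefully, noting that $S_\gamma$ is only defined when $c \neq 0$, which is automatic here since if $c = 0$ then $\gamma$ fixes $\infty$ and has no isometric sphere.

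Next, with $d = 0$, the determinant condition $ad - bc = 1$ in $\PSLtwo(\calO_{-11})$ forces $-bc = 1$, so $b$ and $c$ are units in $\calO_{-11}$. The unit group of $\calO_{-11}$ is $\{\pm 1\}$, so $(b,c) \in \{(1,-1),(-1,1)\}$; modulo the center of $\PSLtwo$ these give the single class represented by $b = -1$, $c = 1$. Thus $\gamma = \begin{pmatrix} a & -1 \\ 1 & 0 \end{pmatrix}$ with $a \in \calO_{-11}$ arbitrary, which is exactly the claimed form. Conversely, any such matrix has $c = 1$, hence radius $1$ and center $0$, so its isometric sphere certainly covers $0$ (indeed $0$ is its center); this establishes the reverse inclusion and completes the characterization.

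The only point requiring a word of justification is the arithmetic fact that $\calO_{-11}$ contains no element of norm strictly between $0$ and $1$ — equivalently, no nonzero element of absolute value less than $1$. This is immediate from the description $\calO_{-11} = \Z[\omega]$ with $\omega = (-1 + \sqrt{-11})/2$: a general element $x + y\omega$ with $x,y \in \Z$ has norm $x^2 - xy + 3y^2$, a positive-definite integral quadratic form, so its value is a nonnegative integer, equal to $0$ only when $x = y = 0$. I expect this to be the only step with any content, and it is genuinely routine; the rest is just unwinding the definition of ``covers'' and the center/radius formulas, together with the triviality of the unit group. So there is no real obstacle here — the lemma is essentially a direct computation, and the main thing to get right is to phrase the norm bound cleanly and to handle the degenerate case $c = 0$ (no isometric sphere) explicitly.
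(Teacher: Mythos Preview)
Your proposal is correct and follows essentially the same route as the paper's own proof: both reduce ``covers $0$'' to the inequality $|d|<1$, conclude $d=0$ from the discreteness of $\calO_{-11}$, and then use the determinant together with the unit group $\{\pm 1\}$ (and the identification in $\PSLtwo$) to pin down $b=-1$, $c=1$. Your version is slightly more detailed in that you spell out the norm-form argument for $|d|<1 \Rightarrow d=0$, treat the case $c=0$ explicitly, and record the converse inclusion, but these are refinements of presentation rather than a different approach.
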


\begin{proof}In order for an isometric sphere to cover $0$, the radius of the sphere must be strictly larger than the complex modulus of its center. For an element 
$ \begin{pmatrix}a & b \\ c & d\end{pmatrix} \in \pslo, $
the radius is $1/|c|$ and the complex modulus of the center is $| -d/c| = |d|/|c|$. Therefore, the isometric sphere covers $0$ if and only if we have $1 > |d|$ for  $d \in \calO_{-11}$, 
and hence we must have $d=0$. 
The requirement that the determinant be $1$ then means that we have $ad-bc = -bc=1$. 
The only non-zero elements of $\calO_{-11}$ of modulus at most $1$ are $\pm 1$, 
and so we see that $b = -c = \pm 1$. 
Since we are working in $\pslo$, we may choose $c=1$. \end{proof}

As a consequence of Lemma~\ref{nozerocover}, we see that there is only one isometric sphere covering $0$, and it is the sphere of radius $1$ centered at the cusp $0$.

\begin{lemma}\label{remove}If the isometric sphere of radius 1 centered at the cusp $0$ is removed from the Ford domain for $\pslo$, the isometric spheres which become visible are those corresponding to the elements
\[ \begin{pmatrix} 1 & 0 \\ \omega & 1 \end{pmatrix}, \begin{pmatrix} 1 & 0 \\ \omega +1 & 1 \end{pmatrix},\]
and their inverses. \end{lemma}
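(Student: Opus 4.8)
The plan is to work out which points of $\bC$ were covered \emph{only} by the unit sphere $S$ centered at $0$, and then identify the isometric spheres of next-largest radius sitting under that region. First I would recall from the description of the Ford domain of $\pslo$ that the visible spheres are the radius-$1$ spheres centered at points of $\calO_{-11}$; among these, the one centered at $0$ overlaps its neighbors centered at the elements of $\calO_{-11}$ of smallest modulus, namely $\pm 1$, $\pm\omega$, $\pm(\omega+1)$ (these are the six nearest lattice points, at distance $1$ from the origin, reflecting the hexagonal packing coming from $\omega=(-1+\sqrt{-11})/2$). Since all these spheres have radius $1$ and their centers are at mutual distance $1$, the portion of the plane lying under $S$ but under \emph{no} other radius-$1$ sphere is a small curvilinear region around $0$; I would describe it explicitly (it is bounded by the six circular arcs of the neighboring unit circles) and note that its only lattice point is $0$ itself.

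Next I would look for isometric spheres of radius $<1$ whose centers lie in that uncovered region. By Lemma~\ref{nozerocover} the only sphere covering $0$ exactly is $S$, so after removing $S$ the point $0$ itself must be covered by spheres strictly smaller than $S$. An element $\begin{pmatrix}a&b\\c&d\end{pmatrix}\in\pslo$ has isometric sphere of radius $1/|c|$ centered at $-d/c$; for the sphere to have radius just below $1$ we need $|c|$ as small as possible among non-units, which for $\calO_{-11}$ (whose only units are $\pm1$) means $|c|^2=3$ — realized by $c=\omega$ and $c=\omega+1$ and their negatives, since $\omega\bar\omega = (\omega)(\omega+1)\cdot(\text{unit})$ has norm $3$. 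So I would restrict to $c\in\{\omega,\omega+1\}$ (up to sign and the $\pslo$ sign ambiguity) and ask which choices of $d$ put the center $-d/c$ in the uncovered region near $0$: this forces $d\equiv 0$ in the relevant sense, i.e. one can take $d=1$ after normalizing, giving center $-1/c$ of modulus $1/\sqrt3$, comfortably inside the uncovered patch. Then $a$ is pinned down (up to adding multiples of $c$, which just translates the sphere by a lattice vector and hence moves it out of the patch) by the determinant condition, yielding $a=1$; this produces exactly $\begin{pmatrix}1&0\\\omega&1\end{pmatrix}$ and $\begin{pmatrix}1&0\\\omega+1&1\end{pmatrix}$, and the two inverses $\begin{pmatrix}1&0\\-\omega&1\end{pmatrix}$, $\begin{pmatrix}1&0\\-(\omega+1)&1\end{pmatrix}$ give spheres centered at the symmetric points $1/\omega$, $1/(\omega+1)$.

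Finally I would verify two things to close the argument. One: these four spheres, together with the surviving radius-$1$ spheres, genuinely cover the whole uncovered patch (a direct check that the radius-$1/\sqrt3$ disks about $\pm1/\omega$, $\pm1/(\omega+1)$ overlap enough to fill the curvilinear hexagon around $0$) — this is the routine geometric computation I would not grind through here but would cite as elementary. Two: no \emph{other} sphere becomes visible, i.e. any isometric sphere that meets the patch but is not one of these four is already covered; this follows because any competing sphere has $|c|^2\geq 3$ hence radius $\leq 1/\sqrt3$, and if $|c|^2\geq 4$ its radius is $\leq 1/2$ and it is too small to poke out from under the four spheres just found, while for $|c|^2=3$ the only centers landing in the patch are the four listed (any other $d$ moves the center out, by the lattice-translation remark). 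The main obstacle I anticipate is the covering verification in step three: one must be careful that the four new spheres plus the old ones leave no sliver uncovered, and that the "visible" spheres one is claiming are not themselves partially occluded by each other in a way that changes the face structure — handling the arc intersections cleanly, possibly by an explicit coordinate computation at the height $\sqrt{2/11}$ where the relevant vertices sit, is where the real work lies.
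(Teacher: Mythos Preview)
Your overall strategy matches the paper's: identify the region uncovered once the unit sphere at $0$ is removed, show the four spheres of radius $1/\sqrt{3}$ fill it, and rule out anything smaller. But there is a genuine error in your geometry of $\calO_{-11}$ that undermines the first half of the argument. You assert that $\pm 1,\pm\omega,\pm(\omega+1)$ are ``the six nearest lattice points, at distance $1$ from the origin, reflecting the hexagonal packing.'' This is false: with $\omega=(-1+\sqrt{-11})/2$ one has $|\omega|^2=|\omega+1|^2=3$, not $1$ (indeed you use $|c|^2=3$ correctly later, which contradicts your own earlier claim). The lattice $\calO_{-11}$ is not hexagonal; it is elongated in the imaginary direction, and the only lattice points at distance $1$ from $0$ are $\pm 1$. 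Consequently your ``curvilinear hexagon'' picture of the uncovered patch is wrong, and the covering check you defer as ``elementary'' would not go through as described.

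The paper's argument handles this more carefully and more concretely. It computes the centers of the four new spheres explicitly as $\pm 1/6 \pm \sqrt{-11}/6$, then observes two facts: first, together with the unit spheres at $\pm 1$, all six spheres pass through the cusp $0$ and meet there in vertical circles of radius $\sqrt{11}/6$; second, the four new spheres meet the unit spheres at $\pm\omega,\pm\bar\omega$ in exactly the same locus that the removed sphere did. This gives the covering directly. For the ruling-out step, the paper argues that any further visible sphere through $0$ must rise above those vertical circles, hence have radius at least $\sqrt{11}/6>1/2$, forcing $|c|^2\leq 3$; this is sharper than your ``$|c|^2\geq 4\Rightarrow$ too small to poke out'' sketch, which as written needs the (incorrect) hexagonal covering to back it up. Fix the lattice geometry and replace the hand-waved covering by the explicit intersection-circle computation, and your outline becomes essentially the paper's proof.
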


\begin{proof}To see this, consider the isometric spheres of radius 1 centered at elements of $\calO_{-11}$, and consider removing the sphere centered at the cusp $0$. We then see the isometric spheres of
\[ \begin{pmatrix} 1 & 0 \\ \omega & 1 \end{pmatrix}, \begin{pmatrix} 1 & 0 \\ \omega +1 & 1 \end{pmatrix},\]
and their inverses, visible between the spheres of radius 1 centered at $\pm 1$, $\pm \omega$ and $\pm \overline{\omega}$. These smaller spheres are centered at $\pm 1/6 \pm \sqrt{-11}/6$ and have radius $1/\sqrt{3}$. Together with the radius 1 spheres centered at $\pm 1$, these six spheres all intersect at the cusp $0$, and in vertical circles of radius $\sqrt{11}/6$. The smaller spheres intersect the radius 1 spheres centered at $\pm \omega$ and $\pm \overline{\omega}$ in the same locus which the removed sphere did.

By Lemma~\ref{nozerocover}, no other sphere can cover $0$. Another visible sphere which intersects $0$ must have radius at least $\sqrt{11}/6$, because it must be visible above the circles where the existing spheres intersect. But since this radius is larger than $1/2$, the only possibilities are that the radius could be $1$ or $1/\sqrt{3}$, and these matrices have already been considered. \end{proof}

In the following, we will denote by $\mathcal{B}$ the collection of isometric spheres of radius 1 centered at all elements of $\calO_{-11}$. 
Then, given an ideal $\langle \pi \rangle \subset \calO_{-11}$, we will denote by $\mathcal{B}_{\langle \pi \rangle}'$ 
the collection of isometric spheres $\mathcal{B}$ with those centered at elements of the ideal $\langle \pi \rangle$ removed. The spheres which then become visible are described by Lemma~\ref{remove}.

Let $p \in \Z$ be an odd rational prime.

\subsection{Case 1: $p$ splits in $\calO_{-11}$}

Suppose that $p = \pi \overline{\pi}$, where $\pi = a + b\omega$. We suppose that $a \geq 0$ and that $b>0$. We will find a Ford domain for the subgroup $\go(\pi)$. We first establish which isometric spheres form the boundary of the set $B$.

For each $\alpha \in \calO_{-11} \setminus \langle \pi \rangle$ there is a corresponding element
\[ \begin{pmatrix} * & * \\ \pi & -\alpha\end{pmatrix} \]
and for no $\alpha \in \langle \pi \rangle$ is there a similar element, as this would cause the determinant to be divisible by $\pi$. The isometric sphere of the given element has center $\alpha/\pi$ and radius $1/\sqrt{p}$. Consider the collection of spheres $\mathcal{S}_{\langle \pi \rangle}$ of radius $1/\sqrt{p}$ centered at $\alpha/\pi$ for $\alpha \in \calO_{-11} \setminus \langle \pi \rangle$.
 Include also in $\mathcal{S}_{\langle \pi \rangle}$ the isometric spheres of 
\[ \begin{pmatrix} 1 & 0 \\ \omega \pi & 1 \end{pmatrix}, \begin{pmatrix} 1 & 0 \\ (\omega +1) \pi & 1 \end{pmatrix},\]
their inverses, and the translates of these spheres by $\text{P}\Gamma_\infty$; these spheres have radius $1/\sqrt{3p}$. We claim that the spheres of $\mathcal{S}_{\langle \pi \rangle}$ suffice to determine the boundary of $B$.

Suppose for sake of contradiction that there exists an element 
\[ M=\begin{pmatrix} \alpha & \beta \\ \delta \pi & \gamma\end{pmatrix} \]
where $\delta \neq 1$, whose isometric sphere is visible above $\mathcal{S}_{\langle \pi \rangle}$. Then we may apply an isometry 
\[ \psi_\pi = \begin{pmatrix} \sqrt{\pi} & 0 \\ 0 & 1/\sqrt{\pi} \end{pmatrix} \]
of $\mathbb{H}^3$ which moves $\mathcal{S}_{\langle \pi \rangle}$ to $\mathcal{B}_{\langle \pi \rangle}'$ and conjugates $M$ to
\[ M'= \psi_\pi M \psi_\pi^{-1} = \begin{pmatrix} \alpha & \beta \pi \\ \delta & \gamma\end{pmatrix}. \]
By the assumption, the isometric sphere of $M'$ is visible above those of $\mathcal{B}_{\langle \pi \rangle}'$. But by Lemma~\ref{remove}, no such spheres are visible, and we have a contradiction. 

We now have a complete list of visible isometric spheres. We note that the vertices at which these spheres intersect are exactly those of $\mathcal{B}_{\langle \pi \rangle}'$ with the isometry $\psi_\pi^{-1}$ applied to them; the vertices of $\mathcal{B}_{\langle \pi \rangle}'$ are the vertices at $\pm z_1$, $\pm z_2$, $\pm z_3$, 
and their translates by $\text{P}\Gamma_\infty$, and at height $\sqrt{2/11}$, 
where $z_1$, $z_2$ and $z_3$ were defined in equations~(\ref{z1}) to~(\ref{z3}) above. After applying $\psi_\pi^{-1}$, 
these vertices are located at $(\pm z_1 + \alpha)/\pi$, $(\pm z_2 + \alpha)/\pi$, and $(\pm z_3 + \alpha)/\pi$, for $\alpha \in \calO_{-11}$, at height $\sqrt{2/11p}$. 

It remains to choose a fundamental domain for the action of $\text{P}\Gamma_\infty$. We claim that each isometric sphere of radius $1/\sqrt{p}$ is $\text{P}\Gamma_\infty$-equivalent to one centered at a point $c/\pi$ for $c \in \Z \setminus \langle p \rangle$, where $\langle p \rangle \subset \Z$ is the prime ideal generated by $p$. 
To see this, we wish to show that there exist $l, m \in \Z$ such that $\alpha/\pi + l + m\omega = c/\pi$ for $c \in \Z$. Writing $\alpha = x + y\omega$, we have 
\[ \frac{\alpha}{\pi} + l + m\omega = \frac{x+y\omega + (l+m\omega)\pi}{\pi}. \]
Writing $\pi = a + b\omega$,
\[ \frac{x+y\omega + (l+m\omega)\pi}{\pi} =  \frac{x+y\omega + al +am\omega+bl\omega + bm\omega^2}{\pi}. \]
Writing $\omega^2 = -3-\omega$,
\begin{align*} \frac{x+y\omega + al +am\omega+bl\omega + bm\omega^2}{\pi} 
 &= \frac{x + al -3bm + (y + am+bl - bm)\omega}{\pi}. \end{align*}
Since $a, b, l, m, x, y \in \Z$, the numerator is a rational integer if and only if 
\[ y +  am+bl - bm = y+bl+(a-b)m = 0. \]
But since $a$ and $b$ are necessarily relatively prime, it follows that so are $b$ and $a-b$, and so we may choose rational integers $l$ and $m$ so that $bl + (a-b)m = -y$.

We next note that none of the isometric spheres centered at $j/\pi$, for $1 \leq j \leq p-1$, are $\text{P}\Gamma_\infty$-equivalent, and that for all $j \in \Z$, $j/\pi$ is $\text{P}\Gamma_\infty$-equivalent to $(j+p)/\pi = (j/\pi) + \overline{\pi}$. Thus every visible sphere of $\mathcal{S}_{\langle \pi \rangle}$ is $\text{P}\Gamma_\infty$-equivalent to one of those centered at $j/\pi$, $1 \leq j \leq p-1$, or to one of those of radius $1/\sqrt{3p}$ which intersect at the cusp $0$. It is most convenient to have our fundamental domain have its faces on the isometric spheres centered at $j/\pi$ for $-(p-1)/2 \leq j \leq (p-1)/2$ (and $j \neq 0$).

We therefore take as our fundamental domain for $\text{P}\Gamma_\infty$ the region above these faces. 
This is the convex hull of the cusps $0$, $\infty$ and the vertices above $\pm z_1/\pi, \pm z_2/\pi$, and $\pm z_3/\pi$, and the following convex hulls: for each $j$ where $-(p-1)/2 \leq j \leq (p-1)/2$, the hull of the vertices at $(j \pm z_1)/\pi$, $(j+\pm z_2)/\pi$, $(j \pm z_3)/\pi$ at height $\sqrt{2/(11p)}$, and the vertex at $\infty$. See Figure \ref{figure:ford_5} for this domain when $p=5$, and $\pi = 2+\omega$.

 \begin{figure}[htb]
 \begin{center}
 \includegraphics[scale=0.6]{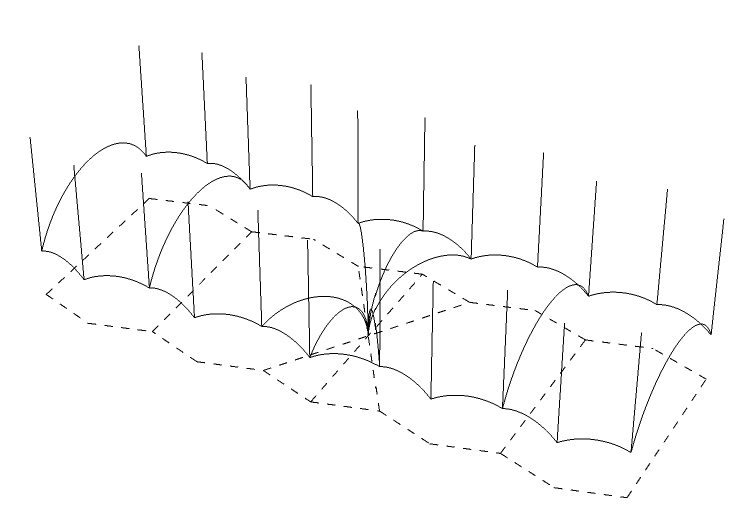}
 \caption{Ford domain for $\Gamma_0(2+\omega) \subset \mathrm{PSL}_2(\mathcal{O}_{-11})$, corresponding to $p=5$}
   \label{figure:ford_5}
 \end{center}
 \end{figure}

\subsection{Case 2: $p$ ramifies in $\calO_{-11}$}

In this case, we have $p=11$, and we will find a Ford domain for the group $\text{P}\Gamma_0(\pi)$ where $\pi = \sqrt{-11} = 1+2\omega$. This works the same as Case 1, with $a=1$ and $b=2$. Applying the method described above, we find a Ford domain bounded by isometric spheres of radius $1/\sqrt{11}$ and centered at $j/\sqrt{-11}$ for $j= \pm1, \pm 2, \pm 3, \pm 4,$ and $\pm 5$, along with four spheres of radius $1/\sqrt{33}$ centered at $\pm 1/(\omega\sqrt{-11})$ and $\pm 1/((\omega+1)\sqrt{-11})$. The vertices of this domain are at $0$ and $\infty$, and the finite vertices at $(j \pm z_1)/\sqrt{-11}$, $(j \pm z_2)/\sqrt{-11}$, and $(j \pm z_3)/\sqrt{-11}$ for $-5 \leq j \leq 5$ at height $\sqrt{2/121} = \sqrt{2}/11$ (see Figure \ref{figure:ford_11}).

 \begin{figure}[htb]
 \begin{center}
 \includegraphics[scale=0.6]{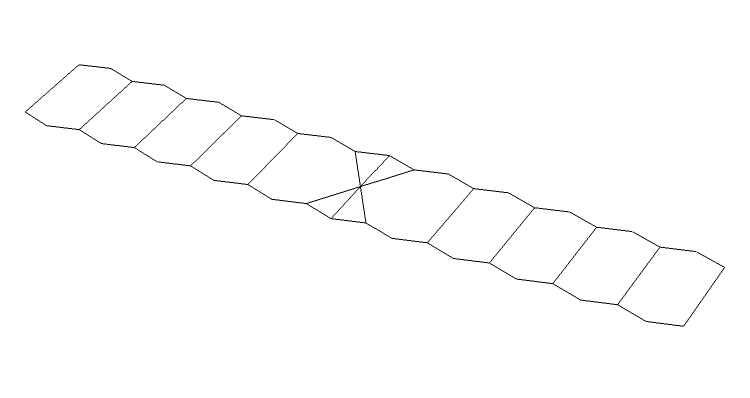}
 \caption{Projection to the complex plane of the Ford domain for $\Gamma_0(1+2\omega) \subset \mathrm{PSL}_2(\mathcal{O}_{-11})$, corresponding to $p=11$}
   \label{figure:ford_11}
 \end{center}
 \end{figure}

\subsection{Case 3: $p$ is inert in $\calO_{-11}$}

We will find a Ford domain for the group $\go(p)$. We first determine the isometric spheres which contribute to $B$.

For each element $\alpha \in \calO_{-11} \setminus \langle p \rangle$, there is a sphere of $B$ corresponding to the element
\[ \begin{pmatrix}\ast & \ast \\ p & -\alpha \end{pmatrix}. \]
We therefore have spheres of radius $1/p$ centered at all points $\alpha/p$ for $\alpha \in \calO_{-11}$, with those spheres centered at $(ap + bp\omega)/p = a+b\omega$ for $a, b \in \Z$ (i.e., those centered at points of $\calO_{-11}$) removed. Consider the collection of spheres $\mathcal{S}_{\langle p\rangle}$ which consists of these spheres and of the isometric spheres of the elements
\[ \begin{pmatrix} 1 & 0 \\ p\omega & 1\end{pmatrix}, \begin{pmatrix} 1 & 0 \\ p(\omega +1) & 1\end{pmatrix}, \]
their inverses, and the translates of these spheres by $\text{P}\Gamma_\infty$. We claim that the spheres of $\mathcal{S}_{\langle p\rangle}$ suffice to determine the boundary of $B$.

To see this, suppose that another isometric sphere, belonging to an element 
\[ M=\begin{pmatrix} \alpha & \beta \\ \delta p & \gamma\end{pmatrix}, \]
is visible above those of $\mathcal{S}_{\langle p\rangle}$. We conjugate by the element
\[ \psi_p = \begin{pmatrix} \sqrt{p} & 0 \\ 0 & 1/\sqrt{p} \end{pmatrix} \]
which has the effect of moving $\mathcal{S}_{\langle p\rangle}$ to $\mathcal{B}_{\langle p\rangle}'$. As such, the isometric sphere of the element
\[ M' = \psi_p M \psi_p^{-1} = \begin{pmatrix} \alpha & \beta p \\ \delta & \gamma\end{pmatrix} \]
is visible above those of $\mathcal{B}_{\langle p\rangle}'$. But by Lemma~\ref{remove}, no such spheres are visible, and we have a contradiction. 

We now have a complete list of visible isometric spheres. We note that the vertices at which these spheres intersect are exactly those of $\mathcal{B}_{\langle p\rangle}'$ with the isometry $\psi_p^{-1}$ applied to them; the vertices of $\mathcal{B}_{\langle p\rangle}'$ are the vertices at $\pm z_1$, $\pm z_2$, $\pm z_3$, 
and their translates by $\text{P}\Gamma_\infty$, and at height $\sqrt{2/11}$. After applying $\psi_p^{-1}$, 
these vertices are located at 
\begin{center}                              
  $(\pm z_1 + \alpha)/p$, $(\pm z_2 + \alpha)/p$, and $(\pm z_3 + \alpha)/p$,
\end{center}
for $\alpha \in \calO_{-11}$, at height $\sqrt{2/(11p^2)}$. 

It remains to choose a fundamental domain for the action of $\text{P}\Gamma_\infty$. Each of the visible spheres of radius $1/(p\sqrt{3})$ is $\text{P}\Gamma_\infty$-equivalent to one of those which intersects $0$. Each sphere of radius $1/p$ is $\text{P}\Gamma_\infty$-equivalent to one of those centered at $j/p + k\omega/p$ for $j, k$ integers with $-(p-1)/2 \leq j, k \leq (p-1)/2$ and $j, k$ not both $0$. Since these spheres are not pairwise $\text{P}\Gamma_\infty$-equivalent, we may take the vertices where these spheres intersect (including vertices where they intersect spheres adjacent to these spheres) as the vertices of our fundamental domain. The vertices are therefore located at $(z_1 + j + k\omega)/p$, $(-z_1 + j + k\omega)/p$, $(z_2 + j + k\omega)/p$, $(-z_2 + j + k\omega)/p$, $(z_3 + j + k\omega)/p$, and $(-z_3 + j + k\omega)/p$, for  $-(p-1)/2 \leq j, k \leq (p-1)/2$. These vertices are located at height $\sqrt{2/(11p^2)}$. 
Our fundamental domain is the union of the convex hull of the vertices above $\pm z_1/p, \pm z_2/p, \pm z_3/p$ and the cusps $0$ and~$\infty$, and each copy of the domain for $\mathrm{PSL}_2(\mathcal{O}_{-11})$ with vertices at  $(z_1 + j + k\omega)/p$, $(-z_1 + j + k\omega)/p$, $(z_2 + j + k\omega)/p$, $(-z_2 + j + k\omega)/p$, $(z_3 + j + k\omega)/p$, and $(-z_3 + j + k\omega)/p$ and $\infty$. See Figure \ref{figure:ford_7} for the case $p=7$.

 \begin{figure}[htb]
 \begin{center}
 \includegraphics[scale=0.6]{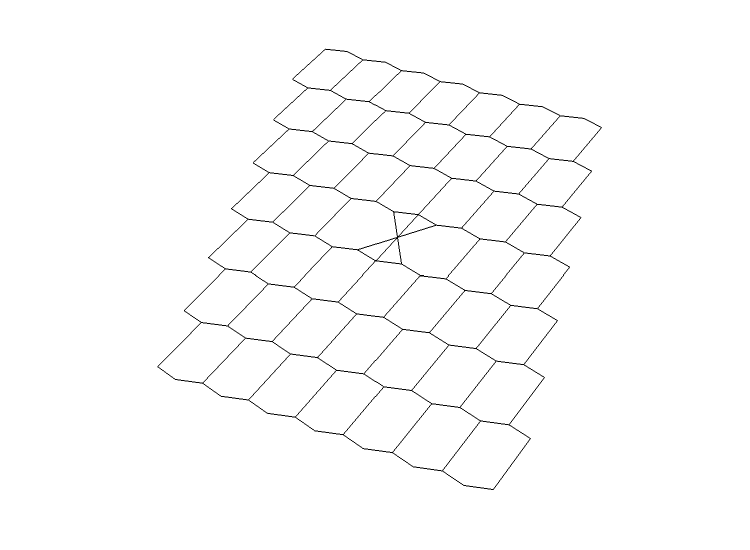}
 \caption{Projection to the complex plane of a Ford domain for $\Gamma_0(7) \subset \mathrm{PSL}_2(\mathcal{O}_{-11})$}
   \label{figure:ford_7}
 \end{center}
 \end{figure}


\subsection*{Other Bianchi Groups}

A similar method produces Ford fundamental domains for congruence subgroups 
$\go(\pi)$ of prime level in the Bianchi groups $\pslod$, 
where $\calO_{-m}$ is another Euclidean ring,
i.e. the ring of integers in the imaginary quadratic number field 
$\rationals(\sqrt{-m}\thinspace)$, 
for $m=1,2,3,7$. The argument above may be adapted to show that the set 
$B$ is bounded by isometric spheres of radius $1/|\pi|$ centered at $a/\pi$ 
for $a \in \calO_{-m} \setminus \langle \pi \rangle$, 
and by at most six smaller isometric spheres, 
which meet at the cusp $0$, and their translates by $\text{P}\Gamma_\infty$. 
Note that the set of spheres in question is invariant under the action of $\text{P}\Gamma_\infty$, hence the local picture is the same at every point of $\calO_{-m}$: 
at every such point, one sees translated copies of those spheres one sees at $0$. 
Furthermore, the fundamental domain for the action of $\text{P}\Gamma_\infty$ may be chosen so that the resulting fundamental domain is bounded by $N(\pi)-1$ spheres of radius $1/|\pi|$ 
(where $N(\pi) = |\pi|^2 \in \N$ denotes the norm of the prime element $\pi \in \calO_{-m}$), 
by the smaller spheres which meet at the cusp $0$, and by the vertical sides which make up the boundary of the fundamental domain of the action of $\text{P}\Gamma_\infty$. 

The fundamental domains constructed in this section for $\go(\pi)$ are composed of $N(\pi)+1$ copies of a Ford domain for the Bianchi group $\pslod$, 
where $N(\pi)+1 = [\pslod : \go(\pi)]$. 
We believe that it should be possible to construct fundamental domains for congruence subgroups of prime level in the other Bianchi groups in a similar way. Furthermore, it should be possible to construct Ford domains for composite levels from copies of those for prime levels in a fashion akin to that described by Lascurain Orive~\cite{Orive} in the case of $\mathrm{PSL}_2(\Z)$.

\section{Example computations} \label{Example computations}
We print the details for only two of our example calculations.
Checking the other outcomes of example calculations mentioned in Section~\ref{sec:components} 
is made straightforward by the algorithm of Section~\ref{sec:Ford}.
\begin{figure}
 \begin{subfigure}[b]{0.45\textwidth}
   \includegraphics[height=50mm]{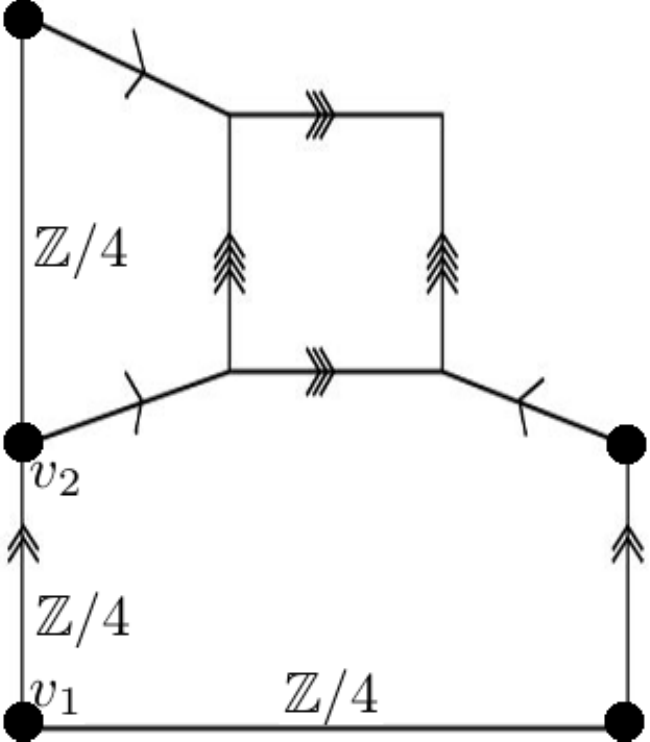}
  \caption{Strict fundamental domain for the $2$-dimensional equivariant retract (away from the principal ideal cusps).  The domain is 
      extracted from a Ford fundamental domain for $\Gamma_0(\sqrt{-2})$,
after the Borel--Serre bordification, replacing the cusps by Euclidean planes with wallpaper group action
(hence a rectangle with identified sides in the fundamental domain).}
  \label{fundamental domain}
 \end{subfigure}
 \hfill
 \begin{subfigure}[b]{0.45\textwidth}
   \includegraphics[height=40mm]{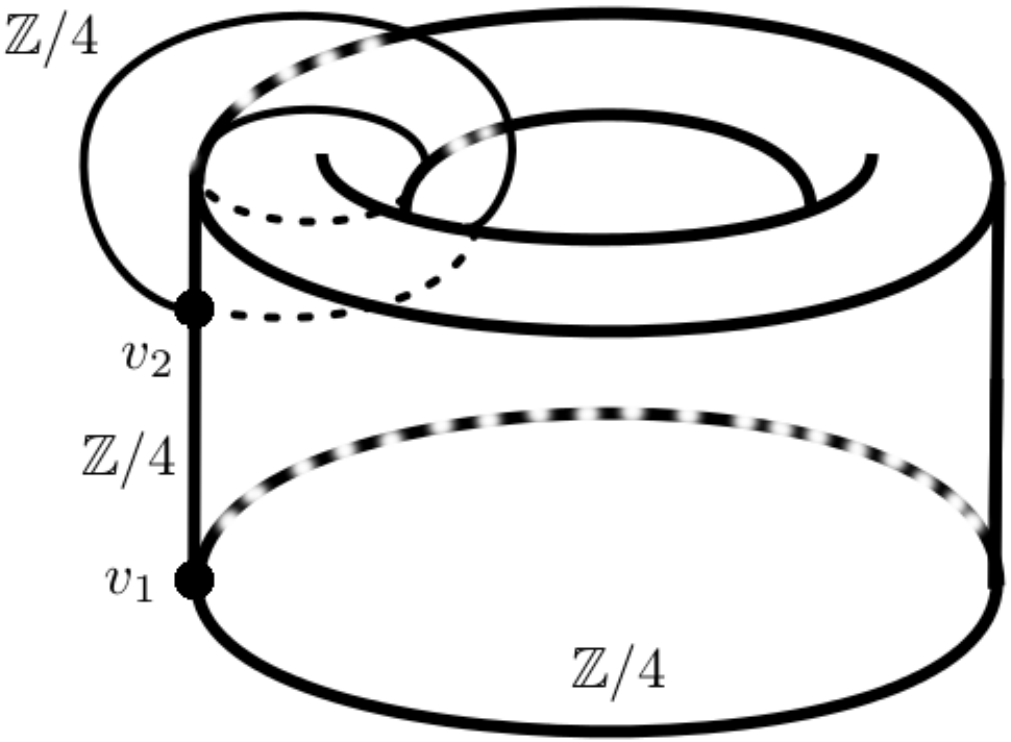}
  \caption{Orbit space of the displayed fundamental domain for $\Gamma_0(\sqrt{-2})$,
  with the cusp having become a $2$-torus in the Borel--Serre compactification.
  The $2$--torsion subcomplex is given by the edges labeled by $\Z/4$,
  and their vertices.}
  \label{orbit space}
  \end{subfigure} 
  \caption{\mbox{Fundamental domain for $\Gamma_0(\sqrt{-2}) \subset \SLtwo(\Z[\sqrt{-2}])$}}
\end{figure}
\subsection{Level $\sqrt{-2}$ in SL$_2(\Z[\sqrt{-2}])$}
Let 
$$\Gamma := {\Gamma_0(\sqrt{-2})}  := \left\{ \left. \tiny \mat \normalsize \in {\rm SL}_2(\Z[\sqrt{-2}]) \medspace \right| \medspace c \in \langle \sqrt{-2} \rangle \right\}.$$
A fundamental domain for $\Gamma$ is given in Figure~\ref{fundamental domain}.
Then, the orbit space including the non-central $2$--torsion subcomplex is drawn in Figure~\ref{orbit space}.
Hence $_\Gamma \backslash X_s = \dumbbellgraph$, $v = 2$, $\chi(_\Gamma \backslash X_s) = -1$. 
Using Figure~\ref{orbit space}, we convince ourselves that $c = 0$,
$\beta^1(_\Gamma \backslash X) = 2$ and $\beta^2(_\Gamma \backslash X) = 1$.
From Theorem~\ref{thm:generalcohom}, we see that the non-central $
2$--torsion subcomplex contributes the following dimensions to the $E_2$ page.
$$
\begin{array}{l | clcl}
q\equiv 3 \mod 4  & (\ef)^2           &  &  (\ef)^3    \\
q\equiv 2 \mod 4  & (\ef)^4           &  &  (\ef)^3    \\
q\equiv 1 \mod 4  & (\ef)^2           &  &  \ef           \\
q\equiv 0 \mod 4  & \ef                  &  &  (\ef)^2    \\
\hline & p = 0 & & p = 1
\end{array}
$$

From Lemma~\ref{lem:d2 mod3} and applying the method described just below it, 
we know that the $d_2$ differential of the equivariant spectral sequence vanishes.
Applying Theorem~\ref{thm:E2page}, this allows us to conclude
that the dimensions of the cohomology ring that we are looking for are
\begin{center}$
\dim_{\F_2}\Cohomol^{q}(\Gamma_0(\sqrt{-2}); \thinspace \F_2)
=$\scriptsize$
\begin{cases}
   c+ \beta^{1} (_\Gamma \backslash X) +\beta^{2} (_\Gamma \backslash X)+2, &  q = 4k+5, \\
   c +\beta^1(_\Gamma \backslash X) +\beta^{2} (_\Gamma \backslash X)+2, &  q = 4k+4, \\
   c +\beta^{1} (_\Gamma \backslash X) + \beta^{2} (_\Gamma \backslash X)+3, &  q = 4k+3, \\
   c+\beta^1(_\Gamma \backslash X) +\beta^{2} (_\Gamma \backslash X)+3, &  q = 4k+2, \\
   \beta^{1} (_\Gamma \backslash X)+2 ,  &  q = 1 \\
\end{cases}
=
\begin{cases}
   5, &  q = 4k+5, \\
   5, &  q = 4k+4, \\
   6, &  q = 4k+3, \\
   6, &  q = 4k+2, \\
   4 ,  &  q = 1. \\
\end{cases}
$\normalsize\end{center}

\subsection{Level $5$ in SL$_2(\Z[\sqrt{-2}])$} Let
$$\Gamma := {\Gamma_0(5)}  := \left\{ \left. \tiny \mat \normalsize \in {\rm SL}_2(\Z[\sqrt{-2}]) \medspace \right| \medspace c \in \langle 5 \rangle \right\}.$$
With the algorithm described in Section~\ref{sec:Ford}, 
we produce the Ford fundamental domain displayed in Figure~\ref{Ford domain}.
\begin{figure}
 \begin{subfigure}[b]{0.45\textwidth}
   \includegraphics[height=71mm]{./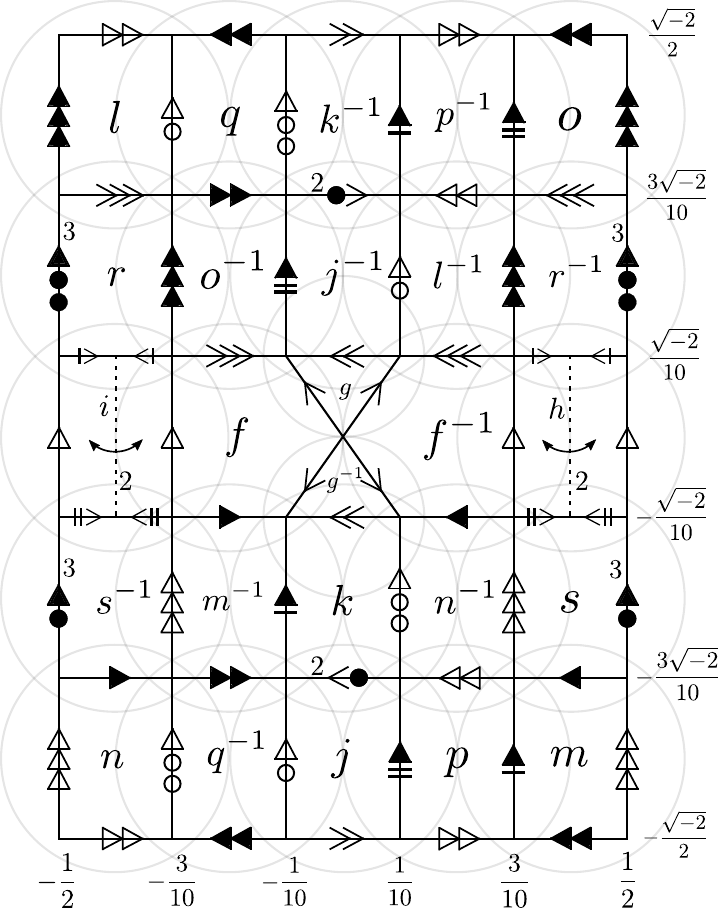}
 \caption{Bottom facets of a Ford fundamental domain for $\Gamma_0(5)$.}
  \label{Ford domain}
 \end{subfigure}
 \hfill
 \begin{subfigure}[b]{0.45\textwidth}
   \includegraphics[height=60mm]{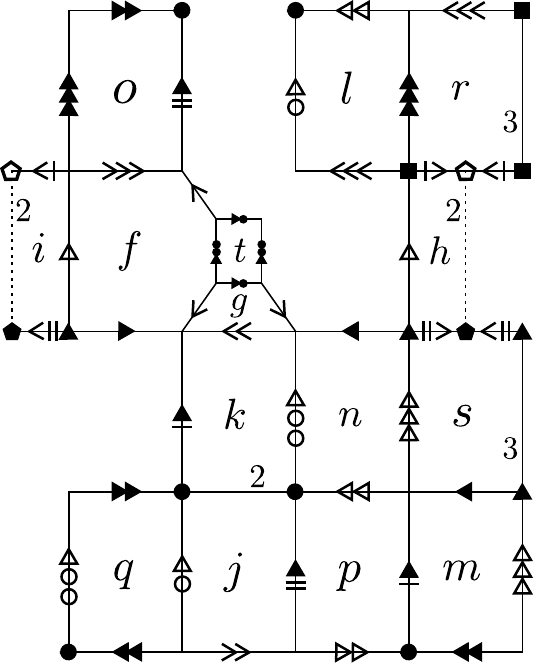}
      \caption{Strict fundamental domain for the 2-dimensional equivariant retract (away from the principal ideal cusps).  The domain is 
      extracted from the displayed Ford fundamental domain for $\Gamma_0(5)$,
    replacing the cusp by a rectangle in the Borel--Serre bordification 
    (passing to a $2$-torus in the Borel--Serre compactification).
  The $2$--torsion subcomplex is given by the edges labeled by a digit~$2$, and their vertices.}
    \label{strict_fundamental_domain}
 \end{subfigure}
 \caption{Ford fundamental domain for $\Gamma_0(5)$ in SL$_2(\Z[\sqrt{-2}])$}
\end{figure}
Carrying out the side identifications, under which each face of the Ford domain has a conjugate face,
we are left with the strict (in its interior) fundamental domain of 
Figure~\ref{strict_fundamental_domain},
which is subject to the indicated edge identifications.
Then the $2$-cells boundary matrix $\partial_2$ has elementary divisors 
\begin{itemize}
 \item 1, of multiplicity 12
\item 2 and 4, each of multiplicity 1.
\end{itemize}
Its kernel is one-dimensional.
The edges boundary matrix 
$\partial_1$ has the only elementary divisor 1, of multiplicity 6.
Therefore, the cellular chain complex of the orbit space,
$$\Z^{15} \overset{\partial_2}{\longrightarrow}  \Z^{22} \overset{\partial_1}{\longrightarrow} \Z^7,$$
has homology 
\[
\Homol_2(_\Gamma \backslash X, \thinspace \Z) \cong \Z, \ \Homol_1(_\Gamma \backslash X, \thinspace \Z) \cong \Z^2 \oplus\Z/2\Z \oplus\Z/4\Z, \  \Homol_0(_\Gamma \backslash X, \thinspace \Z) \cong \Z.
\]

Hence the remaining parameters for Theorem~\ref{thm:E2page} are $\beta^1 = 4$,
$\beta^2 = 3$, $a_3 = 4$, $a_2 = 3+c$ and $a_1 = 3+c$.

From Figure~\ref{strict_fundamental_domain}, we see that
$_\Gamma \backslash X_s \cong \circlegraph \circlegraph$, so
$\chi(_\Gamma \backslash X_s) = 0 = v$;
furthermore for the quotient of the $3$-torsion subcomplex, we also read off the type $\circlegraph \circlegraph$.
Let $\text{P}\Gamma$ be the quotient group $\Gamma / (\{1, -1\} \cap  \Gamma )$.
Because of non-adjacency of the torus $2$-cell to the torsion subcomplexes,
the $d^2_{2,0}$-differential of the equivariant spectral sequence converging to $\Homol_{p+q}(\text{P}\Gamma, \thinspace \Z)$
vanishes; and therefore this spectral sequence yields the short exact sequence
\[
 1 \to (\Z/2)^2 \oplus  (\Z/3)^2 \to \Homol_{1}(\text{P}\Gamma, \thinspace \Z) \to \Z^2 \oplus \Z/2 \oplus \Z/4 \to 1,
\]
which is compatible with the abelianization of the presentation for $\text{P}\Gamma$
produced with the algorithm of Section~\ref{sec:Ford},
as well as with Sch\"onnenbeck's machine result
\[
\Homol_{1}(\text{P}\Gamma, \thinspace \Z) \cong \Z^2 \oplus (\Z/12)^2 \oplus \Z/2.
\]
We now apply Theorem~\ref{thm:E2page}, and get dim $\Homol^1(\Gamma, \thinspace \ef) = 6 - \text{rank}d_2^{0,1}$.
Using the abelianization that we get from the fundamental domain in Figure~\ref{Ford domain},
or alternatively Sch\"onnenbeck's machine computation described in the Appendix,
we infer $\text{rank}\ d_2^{0,1} = 1$.

\section*{Acknowledgment}  We would like to thank an anonymous referee, whose useful comments improved the quality of this paper. 

\newpage

\begin{appendix}
 \section{Machine computations}

\begin{table}  \tiny
$\begin{array}{|c|c|c|c|c|c|c|c|c|c|c|c|c|c|c|} \hline
\Delta  &  -m  &  \text{Level } \eta  &  _{\Gamma_0(\eta)} \backslash X_s  &  \beta_1  & \beta^1  &  r  &  c  &  \Cohomol^1 \rule[-2ex]{0pt}{5ex}  &  \Cohomol^2  &  \Cohomol^3  &  \Cohomol^4  &  \Cohomol^5  \\
\hline &&&&&&&&&&&&\\
 -3  &  -3  &  \langle 2\rangle \text{ (note }\chi=2)&   \edgegraph  &  0  &  0  &  0  &  0  &  0  &  2  &  4  &  3  &  1   \\ 
 -11  &  -11  &  \langle 2\rangle  &  \edgegraph  &  2  &  2  &  0  &  0  &  2  &  4  &  6  &  5  &  3  \\  
 -15  &  -15  &  \langle 2, \frac{3+\sqrt{-m}}{2}\rangle  &  \circlegraph  &  4  &  4  &  0  &  0  &  5  &  8  &  8  &  8  &  8  \\ 
 -7  &  -7  &  \langle \frac{1-\sqrt{-m}}{2}\rangle 
 &  \circlegraph  &  2   &  2  &  0  &  0  &  3  &  4  &  4  &  4  &  4  \\ 
 -15  &  -15  &  \langle \frac{5-\sqrt{-m}}{2}\rangle  &  \circlegraph \circlegraph  &  8   &  8+r  &  r \leq 1  &  1 -r  &  10  &  18  &  18  &  18  &  18  \\ 
 -7  &  -7  &  \langle 3\frac{1-\sqrt{-m}}{2}\rangle 
 &  \circlegraph \circlegraph  &  4  &  4+r  &  r \leq 1  &  1 -r  &  6  &  10  &  10  &  10  &  10  \\ 
  -8  &   -2  &  \langle 5 \rangle &  \circlegraph \circlegraph  & 2 & 3+r & r \leq 2 & 2-r & 5 & 9 & 9 & 9 & 9\\
  -8  &   -2  &  \langle 3+2\sqrt{-m} \rangle &  \circlegraph \circlegraph  & 2 & 2+r & r \leq 2 & 2-r & 4 & 7 & 7 & 7 & 7\\
  -56  &  -14  &  \langle 2, \sqrt{-m}\thinspace \rangle  &  \circlegraph \circlegraph \circlegraph  &  10  &  10  &  1  &  0  &  12  &  21  &  21  &  21  &  21  \\ 
 -56  &  -14  &  \langle 2\rangle  &  \circlegraph \circlegraph \circlegraph  &  16  &  16  &  0  &  0  &  19  &  34  &  34  &  34  &  34  \\ 
 -52  &  -13  &  3\langle 2, 1+\sqrt{-m}\thinspace \rangle  &  \circlegraph \circlegraph \circlegraph  &  25  &  30+r  &  r \leq 3  &  3 -r  &  33  &  65  &  65  &  65  &  65  \\ 
 -40  &  -10  &  \langle 2+\sqrt{-m}\thinspace \rangle  &  \circlegraph \circlegraph \circlegraph  &  10  &  10+r  &  r \leq 2   &  2-r  &  13  &  24  &  24  &  24  &  24  \\ 
 -40  &  -10  &  \langle 5\rangle  &  \circlegraph \circlegraph \circlegraph  &  25  &  26+r  &  r \leq 3  &  3 -r  &  29  &  57  &  57  &  57  &  57  \\ 
 -40  &  -10  &  \langle 5, \sqrt{-m}\thinspace \rangle  &  \circlegraph \circlegraph \circlegraph  &  5  &  6+r  &  r \leq 3  &  3 -r  &  9  &  17  &  17  &  17  &  17  \\ 
 -24  &  -6  &  \langle 2\rangle  &  \circlegraph \circlegraph \circlegraph  &  7  &   7  &  0  &  0  &  10  &  16  &  16  &  16  &  16  \\ 
 -20  &  -5  &  \langle 10, 5+\sqrt{-m}\thinspace \rangle  &  \circlegraph \circlegraph \circlegraph  &  8  &  8+r  &  r \leq 3  &  3 -r  &  11  &  21  &  21  &  21  &  21  \\ 
 -40  &  -10  &  \langle \sqrt{-m}\thinspace \rangle  & 5\circlegraph  &  10   &  12+r  &  r \leq 5  &  5 -r  &  17  &  33  &  33  &  33  &  33  \\ 
 -24  &  -6  &  \langle 2+\sqrt{-m}\thinspace \rangle  &  5\circlegraph  &  8   &  8+r  &  r \leq 3  &  3 -r  &  13  &  23  &  23  &  23  &  23  \\ 
 -56  &  -14  &  \langle 10, 4+\sqrt{-m}\thinspace \rangle  &  6\circlegraph  &  18  &  18+r  &  r \leq 5  &  5 -r  &  24  &  45  &  45  &  45  &  45  \\ 
 -40  &  -10  &  \langle 2, \sqrt{-m}\thinspace \rangle  &  \circlegraph \thetagraph  &  5   &  5+r  &  r \leq 1  &  1 -r  &  8  &  14  &  14  &  13  &  13  \\ 
 -24  &  -6  &  \langle 2, \sqrt{-m}\thinspace \rangle  &  \circlegraph \thetagraph  &  4   &  4  &  0  &  1  &  7  &  11  &  11  &  11  &  11  \\ 
 -52  &  -13  &  \langle 2, 1+\sqrt{-m}\thinspace \rangle  &  \thetagraph  &  7  &  7  &  0  &  0  &  9  &  16  &  16  &  15  &  15  \\ 
 -20  &  -5  &  \langle 2, 1+\sqrt{-m}\thinspace \rangle  &  \thetagraph  &  4   &  4  &  0  &  0  &  6  &  10  &  10  &  9  &  9  \\ 
 -8  &  -2  &  \langle 2\rangle  &  \thetagraph  &  3  &  3  &  0  &  0  &  5  &  8  &  8  &  7  &  7  \\ 
 -52  &  -13  &  \langle 2 \rangle  &  \thetagraph \thetagraph  &  12  &  12  &  0  &  0  &  16  &  30  &  30  &  27  &  27  \\ 
 -40  &  -10  & \langle 2 \rangle  &  \thetagraph \thetagraph  &  9  &  9  &  0  &  0  &  13  &  24  &  24  &  21  &  21  \\ 
 -20  &  -5  &  \langle 2 \rangle  &  \thetagraph \thetagraph  &  6  &  6  &  0  &  0  &  10  &  18  &  18  &  15  &  15  \\ 
 -8  &  -2  &  \langle \sqrt{-m}\thinspace \rangle  &  \dumbbellgraph  &  2  &  2  &  0  &  0  &  4  &  6  &  6  &  5  &  5  \\ 
\hline
\end{array}$
\normalsize
\caption{Cases $\Gamma_0(\eta)$ in the sample with non-empty non-central $2$-torsion subcomplex}
\label{table:FullResults}
\end{table}

\begin{table}[h] \tiny
	$\begin{array}{|c|c|c|c|c|c|} 
	\hline &&&&&\\
	\Delta  & 	-56 & 	-56 & 	-56 & 	-52 & 	-52  \\
	-m 	& 	-14 & 	-14 & 	-14 & 	-13 & 	-13  \\
	\eta & 	\langle 3, 1+\sqrt{-m}\rangle  & 	\langle 6, 2+\sqrt{-m}\rangle  & 	\langle 9, 2+\sqrt{-m}\rangle  & 	\langle 7, 1+\sqrt{-m}\rangle  & 	\langle 1-\sqrt{-m}\rangle  \\
	\hline \beta^1   & 	10 & 	21 & 	20 & 	6 & 	14 \\
	\hline 
	\end{array}$
	$\begin{array}{|c|c|c|c|c|c|c|} 
	\hline &&&&&&\\
	\Delta & 	-52& -24&	-24 & 	-24 & 	-20 & 	-20  \\
	-m 	& 	-13&-6&	-6 & 	-6 & 	-5 & 	-5  \\
	\eta & 	\langle 6+\sqrt{-m}\rangle&\langle 3, \sqrt{-m}\rangle&  \langle \sqrt{-m}\rangle  & 	\langle 3\rangle  & 	\langle 3, 1+\sqrt{-m}\rangle  & 	\langle 1+\sqrt{-m}\rangle  \\
	\hline \beta^1 & 	20 &5   & 10 & 11 & 	4 & 	8 \\
	\hline
	\end{array}$
	$\begin{array}{|c|c|c|c|c|c|c|} 
	\hline &&&&&&\\
	\Delta & 	-20 & 	-15 & -15 & 	-15 & 	-15 & 	-11 \\
	-m 	& 	-5 & 	-15 &-15 & 	-15 & 	-15 & 		-11 \\
	\eta & 	\langle 2-\sqrt{-m} \rangle  & 	\langle 3, \sqrt{-m}\rangle & 	\langle \frac{1-\sqrt{-m}}{2}\rangle  & 	\langle \frac{3+\sqrt{-m}}{2} \rangle &\langle 3\rangle & 	 \langle \frac{1-\sqrt{-m}}{2}\rangle \\
	\hline \beta^1& 	8 & 	4 &	6 & 	8 & 	8 & 2 \\
	\hline
	\end{array}$
	$\begin{array}{|c|c|c|c|c|c|c|c|} 
	\hline &&&&&&\\
	\Delta & 	-11 & 	-11 & 	-11 & 	-11 & 	-8 & 	-8 \\
	-m & 	-11 & 	-11 & 	-11 & 	-11 & 	-2 & 	-2 \\
	\eta   & 	\langle 4\rangle  & 	\langle 1-\sqrt{-m}\rangle  & 	\langle \frac{5+\sqrt{-m}}{2}\rangle  & 	\langle 2+\sqrt{-m}\rangle  & 	\langle 1-\sqrt{-m}\rangle  & 	\langle 2+\sqrt{-m}\rangle  \\
	\hline \beta^1  & 6 & 4 & 	4 & 	5 & 	2 & 	4 \\
	\hline
	\end{array}$
	$\begin{array}{|c|c|c|c|c|c|c|c|c|c|} 
	\hline &&&&&&&&&\\
	\Delta & 		-7 & 	-7 & 	-7 & 	-7 & 	-3 & 	-3 & 	-3 & 	-3 & 	-3 \\
	-m & 			-7 & 	-7 & 	-7 & 	-7 & 	-3 & 	-3 & 	-3 & 	-3 & 	-3 \\
	\eta & \langle \frac{3+\sqrt{-m}}{2}\rangle   & 	\langle \sqrt{-m}\rangle  &\langle \frac{7+\sqrt{-m}}{2}\rangle  & 	\langle 7\rangle   & 	\langle \sqrt{-m}\rangle  & 	\langle 4\rangle  & 	\langle 2\sqrt{-m}\rangle  & 	\langle 3\rangle  & 	\langle \frac{9-\sqrt{-m}}{2}\rangle \\
	\hline \beta^1    &	3  & 	2  & 	4  & 	8  & 	0 & 	1 & 	0 & 	0 & 	1\\
	\chi &	   &	   &	   &	   &     2 &	2  &	4  &	4  &	4  \\
	\hline
	\end{array}$
	\normalsize
	\caption{Cases $\Gamma_0(\eta)$ in the sample with empty non-central $2$-torsion subcomplex} 
	\label{table:EmptyCmplx}
\end{table}

 We use Sebastian Sch\"onnenbeck's implementation~\cite{Schoennenbeck} of the Vorono\"i cell complex
 to compute the cohomology of a sample (six levels $\eta$ in each of ten imaginary quadratic fields)
 of congruence subgroups $\Gamma_0(\eta)$.
 Then we use Bui Anh Tuan's implementation of Rigid Facets Subdivision in order to extract the 
 non-central $2$-torsion subcomplex.
 This allows us to check our example computations with the algorithm of Section~\ref{sec:Ford},
 and to illustrate which values the parameters in our formulas can take.

Let $\ringOm$ be the ring of integers in the field $\rationals(\sqrt{-m})$ with discriminant $\Delta$. 
We present the ideal $\eta \subset \ringOm$ with the smallest possible number of generators;
hence when we use two generators, it is because $\eta$ is not principal.
We let $c$ be the co-rank defined in Section~\ref{Section:E2 page};
let $\beta_q = \dim_{\rationals}\Cohomol_q(_{\Gamma_0(\eta)} \backslash \hy ; \thinspace \rationals)$ and 
$\beta^q = \dim_{\F_2}\Cohomol^q(_{\Gamma_0(\eta)} \backslash \hy ; \thinspace \F_2)$ for $q = 1, 2$; 
and $_{\Gamma_0(\eta)} \backslash X_s$  
the orbit space of the non-central $2$-torsion subcomplex.
We further write $\Cohomol^q := \dim_{\F_2}\Cohomol^q({\Gamma_0(\eta)} ; \thinspace \F_2)$.
Let $r$ be the rank of the $d_2^{0,1}$-differential of the equivariant spectral sequence.
In many cases, notably for $c = 0$, or when there are no components different from $\circlegraph$ in $_{\Gamma_0(\eta)} \backslash X_s$,
the $d_2^{0,2+4k}$-differential vanishes because of the lemmata in Section~\ref{sec: d2 vanishes}.
\textit{Note that for one case in our sample, $\Gamma_0(\langle 2, \sqrt{-6}\rangle)$ in SL$_2(\Z[\sqrt{-6}\thinspace])$,
the machine computation yields} rank$(d_2^{0, 2+4k}) = 1$. 
In all other cases in our sample, the computation yields $d_2^{0, 2+4k} = 0$, so we do not print $d_2^{0, 2+4k}$.
The machine calculations in HAP~\cite{HAP} allow us to produce $\Cohomol^q$ and $\beta_1$ directly from Sebastian Sch\"onnenbeck's cell complexes.
From $\Cohomol^q$ and $\beta_1$, we use the corollaries in Section~\ref{sec: d2 vanishes} to get  $\beta^1$,  $r$ and $c$ in Table \ref{table:FullResults}.  
When $_{\Gamma_0(\eta)} \backslash X_s$ is empty, then because of Proposition~\ref{empty case},
only the Betti numbers are of interest; results for those cases can be found in Table \ref{table:EmptyCmplx}.  
In all cases except for the Eisenstein integers in $\rationals(\sqrt{-3}\thinspace)$,
the Euler characteristic $\chi$ of $_{\Gamma_0(\eta)} \backslash \hy$ vanishes~\cite{Vogtmann}, so we only need~$\beta^1$.
Therefore, we indicate $\chi = 1-\beta_1+\beta_2$ only in those Eisenstein integers cases, where it can be non-zero.

\end{appendix}

\bibliographystyle{amsplain}
 \begin{bibdiv}
\begin{biblist}
\bib{AdemMilgram}{book}{
   author={Adem, Alejandro},
   author={Milgram, R. James},
   title={Cohomology of finite groups},
   series={Grundlehren der Mathematischen Wissenschaften [Fundamental
   Principles of Mathematical Sciences]},
   volume={309},
   edition={2},
   publisher={Springer-Verlag},
   place={Berlin},
   date={2004},
   pages={viii+324},
}
\bib{BerkoveRahm}{article}{
      author={Berkove, Ethan},
            author={Rahm, Alexander~D.},
       title={The mod $2$ cohomology rings of SL$_2$ of the imaginary quadratic integers},
        date={2016},
     journal={Journal of Pure and Applied Algebra},
      volume={220},
      number={3},
       pages={944\ndash 975},
}

\bib{BuiRahm}{article}{
   author = {Tuan Anh Bui},
   author ={Alexander D. Rahm},
   author ={Matthias Wendt},
    title = {The Farrell-Tate and Bredon homology for PSL$_4(\Z)$ and other arithmetic groups},
  journal = {ArXiv e-prints},
     year = {1611.06099[math.KT]},
}
\bib{BCNS}{article}{
   author={Braun, Oliver},
   author={Coulangeon, Renaud},
   author={Nebe, Gabriele},
   author={Sch\"onnenbeck, Sebastian},
   title={Computing in arithmetic groups with Vorono\"\i 's algorithm},
   journal={J. Algebra},
   volume={435},
   date={2015},
   pages={263--285},
}
\bib{Brown}{book}{
   author={Brown, Kenneth S.},
   title={Cohomology of groups},
   series={Graduate Texts in Mathematics},
   volume={87},
   note={Corrected reprint of the 1982 original},
   publisher={Springer-Verlag},
   place={New York},
   date={1994},
   pages={x+306},
}

\bib{CalegariVenkatesh}{article}{
   author = {Calegari, Frank},
   author ={Venkatesh, Akshay},
    title = {A torsion Jacquet--Langlands correspondence},
  journal = {ArXiv e-prints},
   pages = {1212.3847[math.NT], book (250 pages) in preparation},
     year = {2012},
}


\bib{HAP}{article}{
   author={Ellis, Graham},
   title={Homological algebra programming},
   conference={
      title={Computational group theory and the theory of groups},
   },
   book={
      series={Contemp. Math.},
      volume={470},
      publisher={Amer. Math. Soc.},
      place={Providence, RI},
   },
   date={2008},
   pages={63--74},
}


\bib{GrunewaldSchwermer}{article}{
   author={Grunewald, Fritz},
   author={Schwermer, Joachim},
   title={Subgroups of Bianchi groups and arithmetic quotients of hyperbolic
   $3$-space},
   journal={Trans. Amer. Math. Soc.},
   volume={335},
   date={1993},
   number={1},
   pages={47--78},
}

\bib{Henn}{article}{
   author={Henn, Hans-Werner},
   title={The cohomology of ${\rm SL}(3,{\bf Z}[1/2])$},
   journal={$K$-Theory},
   volume={16},
   date={1999},
   number={4},
   pages={299--359},
}

\bib{binaereFormenMathAnn9}{article}{
      author={Klein, Felix},
       title={Ueber bin\"are {F}ormen mit linearen {T}ransformationen in sich selbst},
        date={1875},
        ISSN={0025-5831},
     journal={Math. Ann.},
      volume={9},
      number={2},
       pages={183\ndash 208},
         url={http://dx.doi.org/10.1007/BF01443373},
}
\bib{Kraemer}{book}{
   author={Kr\"amer, Norbert},
   title={Imagin\"{a}rquadratische Einbettung von Maximalordnungen rationaler Quaternionenalgebren, und die nichtzyklischen endlichen Untergruppen der Bianchi-Gruppen},
  note={\url{http://hal.archives-ouvertes.fr/hal-00720823/en/}},
   date={2017}, 
   language={German},
   address={preprint},
}
%

\bib{Orive}{article}{
    AUTHOR = {Lascurain Orive, Antonio},
     TITLE = {The shape of the {F}ord domains for {$\Gamma\sb 0(N)$}},
   JOURNAL = {Conform. Geom. Dyn.},
    VOLUME = {3},
      YEAR = {1999},
     PAGES = {1--23 (electronic)},
       URL = {http://dx.doi.org/10.1090/S1088-4173-99-00030-2},
}
%

\bib{Maskit}{book}{
    AUTHOR = {Maskit, Bernard},
     TITLE = {Kleinian groups},
    SERIES = {Grundlehren der Mathematischen Wissenschaften [Fundamental
              Principles of Mathematical Sciences]},
    VOLUME = {287},
 PUBLISHER = {Springer-Verlag, Berlin},
      YEAR = {1988},
     PAGES = {xiv+326},
}

\bib{MS}{article}{
   author={Moerdijk, I.},
   author={Svensson, J.-A.},
   title={The equivariant Serre spectral sequence},
   journal={Proc. Amer. Math. Soc.},
   volume={118},
   date={1993},
   number={1},
   pages={263--278},
}

\bib{Page}{article}{,
    AUTHOR = {Page, Aurel},
     TITLE = {Computing arithmetic {K}leinian groups},
   JOURNAL = {Math. Comp.},
    VOLUME = {84},
      YEAR = {2015},
    NUMBER = {295},
     PAGES = {2361--2390},
      ISSN = {0025-5718},
       URL = {https://doi.org/10.1090/S0025-5718-2015-02939-1},
}

\bib{RahmTorsion}{article}{
   author={Rahm, Alexander~D.},
   title={The homological torsion of $\rm{PSL}_2$ of the imaginary quadratic integers},
   journal={Trans. Amer. Math. Soc.},
   volume={365},
   date={2013},
   number={3},
   pages={1603--1635},
}
\bib{RahmNoteAuxCRAS}{article}{
      author={Rahm, Alexander~D.},
       title={Homology and {$K$}-theory of the {B}ianchi groups},
     journal={ C. R. Math. Acad. Sci. Paris},
   volume={349},
   date={2011}, 
   number={11-12}, 
   pages={615--619},
}
\bib{AccessingFarrell}{article}{
      author={Rahm, Alexander~D.},
       title={Accessing the cohomology of discrete groups above their virtual cohomological dimension},
     journal={Journal of Algebra},
   volume={404},
   number={C},
   date={2014}, 
   pages={152--175},
     review = {DOI: 10.1016/j.jalgebra.2014.01.025},
}
%
\bib{RahmFuchs}{article}{
    Author = {Alexander D. {Rahm} and Mathias {Fuchs}},
    Title = {{The integral homology of $\mathrm{PSL}_2$ of imaginary quadratic integers with nontrivial class group}},
    Journal = {{J. Pure Appl. Algebra}},
    ISSN = {0022-4049},
    Volume = {215},
    Number = {6},
    Pages = {1443--1472},
    Year = {2011},
    Publisher = {Elsevier Science B.V. (North-Holland), Amsterdam},
}
\bib{RahmTsaknias}{book}{
   author =  {Rahm, Alexander~D.} ,
   author =  {Tsaknias, Panagiotis} ,
   title =   {Genuine Bianchi modular forms of higher level, at varying weight and discriminant},
   address = {preprint, \url{http://orbilu.uni.lu/handle/10993/29493} \qquad},
  year ={2017},
}
\bib{Schoennenbeck}{article}{
author={Sch{\"o}nnenbeck, Sebastian},
title={Resolutions for unit groups of orders},
journal={Journal of Homotopy and Related Structures},
year={2016},
pages={1--16},
}
\bib{SchwermerVogtmann}{article}{
   author={Schwermer, Joachim},
   author={Vogtmann, Karen},
   title={The integral homology of ${\rm SL}_{2}$ and ${\rm PSL}_{2}$ of
   Euclidean imaginary quadratic integers},
   journal={Comment. Math. Helv.},
   volume={58},
   date={1983},
   number={4},
   pages={573--598},
}

\bib{Singer}{book}{
   author={Singer, William M.},
   title={Steenrod Squares in Spectral Sequences},
   series={Mathematical Surveys and Mongraphs},
   volume={129},
     Pages = {xi + 155},
    Year = {2006},
    Publisher = {American Mathematical Society (AMS), Providence, RI},
}

\bib{Soule}{article}{
   author={Soul\'e, Christophe},
   title={The cohomology of ${\rm SL}_{3}({\bf Z})$},
   journal={Topology},
   volume={17},
   date={1978},
   number={1},
   pages={1--22},
   issn={0040-9383},
}
 
 \bib{Vogtmann}{article}{
    author={Vogtmann, Karen},
    title={Rational homology of Bianchi groups},
    journal={Math. Ann.},
    volume={272},
    date={1985},
    number={3},
    pages={399--419},
 }
\end{biblist}
\end{bibdiv}

\end{document}